\crefname{subsection}{section}{sections}
\Crefname{subsection}{Section}{Sections}
\renewcommand\eqref[1]{(\ref{#1})}
\newtheorem{thm}{Theorem}[section]
\newtheorem{cor}[thm]{Corollary}
\newtheorem{lem}[thm]{Lemma}
\newtheorem{prop}[thm]{Proposition}
\theoremstyle{definition}
\newtheorem{defn}[thm]{Definition}
\newtheorem{remark}[thm]{Remark}
\newcommand\<{\begin{equation}} \renewcommand\>{\end{equation}}
\newcommand\abs[1]{\left\lvert #1 \right\rvert}
\newcommand\floor[1]{\left\lfloor #1 \right\rfloor}
\newcommand\ceil[1]{\left\lceil #1 \right\rceil}
\newcommand\bra[1]{\left\lfloor #1 \right\rceil}
\newcommand\ang[1]{\left\langle #1 \right\rangle}
\newcommand\setform[3][:]{\left\{\,#2 \,#1\, #3 \,\right\}}
\renewcommand\bar[1]{{\,\overline{\!#1\!}\,}}
\let\hat\widehat
\renewcommand\Re{\operatorname{Re}}
\renewcommand\Im{\operatorname{Im}}
\renewcommand\i{{\begin{tikzpicture}[line width=0.04em, line join=round, line cap=round] \draw [opacity=0] (-0.1,0) -- (0.1,0); \draw (-0.025,0) -- (-0.025,0.1765); \draw (-0.05,0) -- (0.05,0); \draw (-0.05,0.17) -- (0.025,0.19) -- (0.025,0); \draw [rotate=-30] (120:0.27) circle (0.025 and 0.03); \end{tikzpicture}}} 
\newcommand\ssi{{\begin{tikzpicture}[scale=0.67, line width=0.028em, line join=round, line cap=round] \draw [opacity=0] (-0.1,0) -- (0.1,0); \draw (-0.025,0) -- (-0.025,0.1765); \draw (-0.05,0) -- (0.05,0); \draw (-0.05,0.17) -- (0.025,0.19) -- (0.025,0); \draw [rotate=-30] (120:0.27) circle (0.025 and 0.03); \end{tikzpicture}}}
\providecommand\C{C} % xelatex has \C already defined
\renewcommand\C{{\mathbb C}}
\newcommand\D{{\mathbb D}}
\newcommand\N{{\mathbb N}}
\newcommand\R{{\mathbb R}}
\newcommand\Qb{{\mathbb Q}}
\newcommand\Z{{\mathbb Z}}
\newcommand\Cc{{\mathcal C}}
\newcommand\Hc{{\mathcal H}}
\newcommand\Pc{{\mathcal P}}
\newcommand\Piecewise[2][]{ \left\{ \begin{array}{ll} #2 \ifthenelse{\equal{#1}{}}{}{\\ #1 & \text{otherwise}}\end{array} \right. }
\providecommand\ding{} \renewcommand\ding[1]{{\scriptsize\fcolorbox{red}{red!10}{\bf\color{red}\!#1\!}\!}}
\def\commentcolor{red}
\newcounter{commentcounter} \setcounter{commentcounter}{0}
\newcommand{\COMMENT}[2][\commentcolor]{\stepcounter{commentcounter}{\normalsize\textcolor{#1} {\smash{\rlap{\raisebox{0.6em}{\scriptsize\ding\thecommentcounter}}}}}\marginpar{\color{#1}\footnotesize\vskip-.6 \baselineskip\raggedright\hsize1.1in{\normalsize\ding{\thecommentcounter}} \thinspace#2\vskip.3\baselineskip}%
\ifthenelse{\thecommentcounter=191}{\setcounter{commentcounter}{171}}{}%
\ifthenelse{\thecommentcounter=181}{\setcounter{commentcounter}{181}\def\commentcolor{red!50!blue}}{}}
\newcommand\ExtAlpha{{\mathcal A}} % for specific algorithms
\providecommand\temp{} % for later reuse
\title[Finite partitions for complex continued fractions]{Finite partitions for several complex \linebreak continued fraction algorithms}
\author{Adam Abrams}
\address{Institute of Mathematics, Polish Academy of Sciences, {\'S}niadeckich 8, Warsaw, Poland 00656}
\email{the.adam.abrams@gmail.com}
\date{23 October 2019}
\keywords{Continued fractions, Gauss map, complex continued fractions, natural extension, partition}
\begin{document}
\begin{abstract}We present a property satisfied by a large variety of complex continued fraction algorithms (the ``finite building property'') and use it to explore the structure of bijectivity domains for natural extensions of Gauss maps. Specifically, we show that these domains can each be given as a finite union of Cartesian products in $\C \times \C$. In one complex coordinate, the sets come from explicit manipulation of the continued fraction algorithm, while in the other coordinate the sets are determined by experimental means.\end{abstract}
\maketitle

\section{Introduction} \label{sec intro}

Real continued fractions can be used to study geodesic flow on the modular surface $\Hc^2 \backslash \mathrm{PSL}(2,\Z)$, initially investigated by Artin~\cite{Artin} with further development by Caroline Series \cite{S81, S85} and Adler and Flatto~\cite{AF82, AF84}. Katok and Ugarcovici~\cite{KU05,KU10,KU12} detailed a two-parameter family of algorithms, the so-called $(a,b)$-con\-tin\-ued fraction algorithms, which have applications in both number theory and dynamics.
The main result of~\cite{KU10} is that for $(a,b)$-con\-tin\-ued fraction algorithms, the natural extension of the Gauss map has an attractor in $\R \times \R$ that has ``finite rectangular structure.'' A key tool in Katok--Ugarcovici's analysis is the ``cycle structure.''

Continued fractions for complex numbers have been studied from a number theoretic perspective by Adolf Hurwitz~\cite{AHurwitz}, Doug Hensley~\cite{Hensley}, and more recently by S.~G.~Dani and Arnaldo Nogueira~\cite{DN14}.

The natural extensions of Gauss maps for several real and complex continued fraction algorithms have been used to derive absolutely continuous invariant measures for the Gauss maps themselves. This method was applied by Nakada et al.~\cite{NIT} to the real nearest integer algorithm and its dual (or ``backwards'') algorithm; the minus version of these two algorithms are $(a,b)$-con\-tin\-ued fractions with $(a,b)=(-\tfrac12,\tfrac12)$ and $(a,b)=(\tfrac{1-\sqrt5}2,\tfrac{3-\sqrt5}2)$, respectively. In \cite{KU12}, Katok--Ugarcovici use the natural extension to calculate the invariant measure for any $(a,b)$-con\-tin\-ued fraction Gauss map. In the complex setting, this method was applied by Tanaka \cite{T85} to the nearest even integer algorithm and by Ei et al.~\cite{EINN} to the nearest integer algorithm.
% Also by Nakada~\cite{N88} to Schmidt's complex algorithm from \cite{Schmidt75}

\medskip
In this paper, we investigate complex continued fractions and their Gauss maps' natural extensions $\hat G$ acting on $\C \times \C$. In particular, we describe a substitute for the ``cycle structure'' that can be used with complex algorithms, and we show that a bijectivity region for $\hat G$ can have the form $\bigcup_{i=1}^N K_i \times L_i$, which we call \emph{finite product structure}. When dealing with these Cartesian products, the sets $K_i$ come from explicit manipulation of the continued fraction algorithm while the sets $L_i$ are approximated numerically and, for some algorithms, exact descriptions of these sets are also rigorously proven.

The paper is organized as follows. In \Cref{sec cf}, we present some background on complex continued fractions and prove some technical results. In \Cref{sec fb}, we define the finite building property, give several sufficient conditions for the property to hold, and discuss the finite product structure of sets in $\C^2$. In \Cref{sec examples}, we present six different algorithms in detail (these are also shown in \Cref{fig regions} in \Cref{sec cf}). Partitions for the finite building property are given for all six algorithms, and the finite product structure is given explicitly for some of the algorithms.

\section{Continued fractions} \label{sec cf}

A \emph{minus complex continued fraction} is an expression of the form
\[ \label{cf-in-general}
	a_0 - \dfrac{1}{
	a_1-\dfrac{1}{a_2-\dfrac{1}{\ddots
	\raisebox{-.4em}{$-\;\tfrac{1}{a_n}$}
	}}}
	\qquad\text{or}\qquad
	a_0 - \dfrac{1}{
	a_1-\dfrac{1}{a_2-\dfrac{1}{\ddots
	}}}
\]
where each $a_n$ is a Gaussian integer, that is, an element of $\Z[\i] = \setform{ x + y\i }{ x,y \in \Z }$. The $a_i$ are called the \emph{digits} of the continued fraction (in some works they are called ``partial quotients''\!).
For applications to dynamical systems, finite continued fractions are often ignored, and instead the sole focus is on infinite continued fractions.

Given a sequence $\{a_n\}$, one can define sequences $\{p_n\}$ and $\{q_n\}$ by
\< \label{pq}
	\begin{array}{l@{\qquad\qquad}l@{\qquad\qquad}l}
		p_{-2} = 0	&   p_{-1} = 1   &   p_n = a_n p_{n-1} - p_{n-2}
		\quad\text{for } n \ge 0, \\
		q_{-2} = -1   &   q_{-1} = 0   &   q_n = a_n q_{n-1} - q_{n-2}
		\quad\,\text{for } n \ge 0.
	\end{array}
\>
Formal algebraic manipulations (in any field, not just $\C$) give that
\< \label{convergents}
	\dfrac{p_n}{q_n} = 
	a_0 - \dfrac{1}{
	a_1-\dfrac{1}{a_2-\dfrac{1}{\raisebox{-0.4em}{\smash{
	\raisebox{0.3em}{$\ddots$}
	\raisebox{0em}{$-\;\tfrac{1}{a_n}$}
	}}}}}
\>
assuming $a_n \ne 0$ and $q_n \ne 0$; see \cite[Theorem~2.2]{DN14} for a sufficient condition to imply $q_n \ne 0~\forall~n$. When it exists, the term ${p_n}/{q_n}$ is called the \emph{$n^\text{th}$ convergent} of the continued fraction.

%	Relating the digits and convergents by
%	\providecommand\temp{}\renewcommand\temp[2][]{\begin{pmatrix}0&#1{1}\\1&#2\end{pmatrix}}
%	\begin{align*}
%		\begin{pmatrix} p_{n-1} & p_n \\ q_{n-1} & q_n \end{pmatrix} &= \temp{a_1}\temp{a_2}\cdots\temp{a_n} \qquad\text{for $+$ c.f. if $a_0=0$} \\
%		\begin{pmatrix} p_{n-1} & p_n \\ q_{n-1} & q_n \end{pmatrix} &= \temp[-]{a_1}\temp[-]{a_2}\cdots\temp[-]{a_n} \qquad\text{for $-$ c.f. if $a_0=0$,}
%	\end{align*}
%	one sees that for minus c.f.~the matrices all have determinant $+1$, while for plus~c.f. there are matrices with determinant $-1$. This is one argument in favor of minus continued fractions over plus.

\subsection{Choice functions and algorithms}

Given a value $x \in \R$ or $x \in \C$, there are various algorithms that can be used to construct a finite or infinite sequence $(a_0, a_1, \ldots)$ such that $\smash{ a_0 - \frac1{a_1-\frac1{\smash{\raisebox{-0.35em}{\scriptsize$\ddots$}}}} }$ converges to~$x$ (or equals~$x$ at some point if the sequence is finite). 

The following combines terminology from Dani--Nogueira \cite{DN14} and notation from Katok--Ugar\-co\-vici \cite{KU10, KU12}.\footnote{\,Dani and Nogueira denote a choice function by $f(x)$. Katok and Ugarcovici use the notation $\bra{x}_{a,b}$ for their ``generalized integer part'' function.}

\begin{defn}
A \emph{choice function} is a function
\[ \bra{\cdot}: \C\setminus\{0\} \to \Z[\i] \]
such that $\big\lvert\, z - \bra{z} \big\rvert \le 1$ for all $z \in \C$.
Each choice function has a \emph{fundamental set}~$K$ given by
\< \label{K} K := \overline{\setform{z - \bra{z}}{z \in \C}}. \>
\end{defn}

Throughout this paper, $\D$ is the open unit disk in $\C$ and $\bar\D$ is the closed unit disk. Note that $K \subset \bar\D$ for any choice function.

The most classical example of a choice function is the nearest integer algorithm, also called the ``Hurwitz algorithm,'' in which $\bra{z}$ is the Gaussian integer closest to $z$ and $K$ is the unit square centered at the origin; see \Cref{fig regions}(a).

\begin{remark} The nearest integer algorithm possesses several properties that are not generally required of choice functions. Several other choice functions are shown graphically in \Cref{fig regions} and described in \Cref{sec examples}.
	\begin{itemize}
		\item For the nearest integer algorithm, $K \subset B(0,r)$ for some $r<1$. This property is assumed for certain number theoretic results (for example, \cite[Prop.~2.4]{DN14}). None of the algorithms discussed in this paper other than the nearest integer satisfy this property.
		\item Translates of $K$ tile the complex plane for many algorithms but not for the ``diamond'' and ``disk'' algorithms (see \Cref{ex diamond,ex disk}).
		\item The set $\setform{ z \in \C }{ \bra{z} = 0 }$ coincides with $K$ for every algorithm discussed in this paper \emph{except} the ``nearest odd'' algorithm (\Cref{ex NO}), for which $\bra{z}$ is never $0$.
		\item The set $K$ contains a neighborhood of the origin for all algorithms discussed here \emph{except} for the ``shifted Hurwitz'' algorithm (\Cref{ex SH}).
	\end{itemize}
\end{remark}

Equation \eqref{K} defines the fundamental set for a given choice function. The following \namecref{make choice} shows that the reverse construction is sometimes possible, that is, certain sets in $\C$ can be used to construct choice functions.

\begin{figure}
    \begin{tabular}{cc}
    	\includegraphics[width=0.42\textwidth]{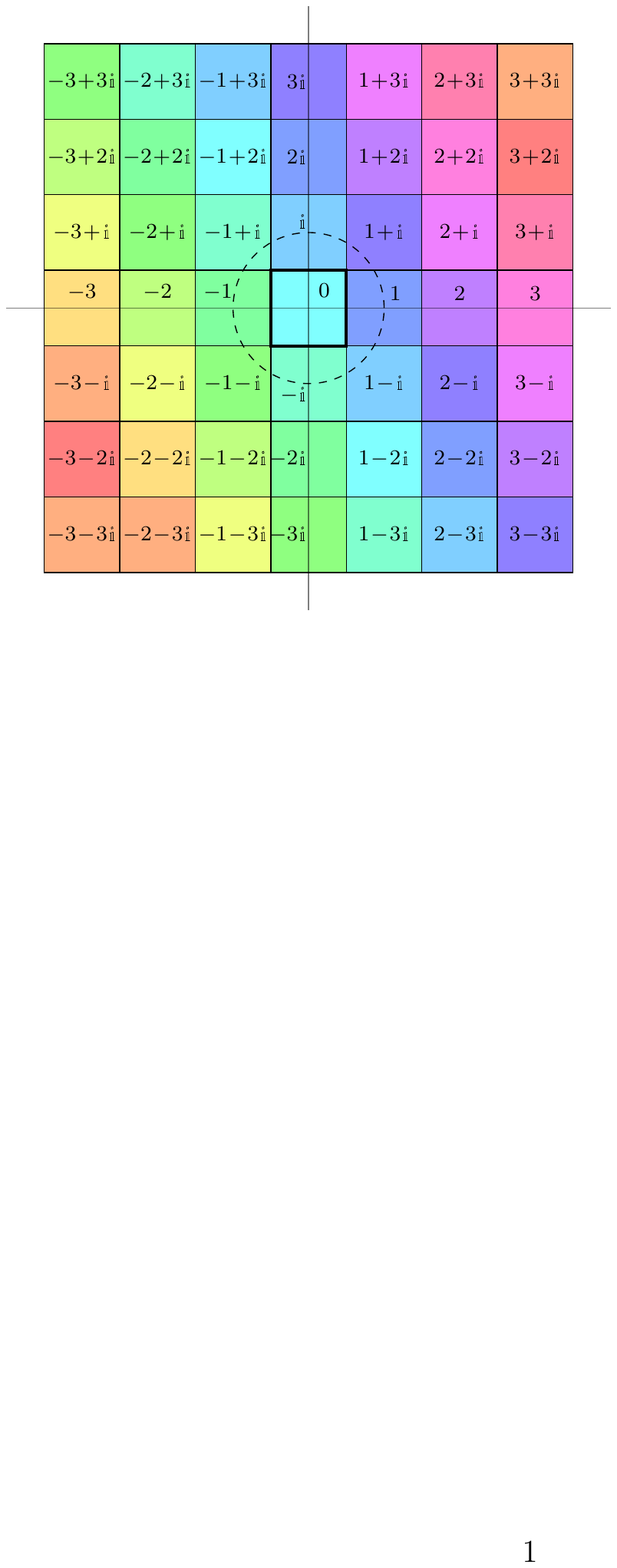} &
    	\includegraphics[width=0.42\textwidth]{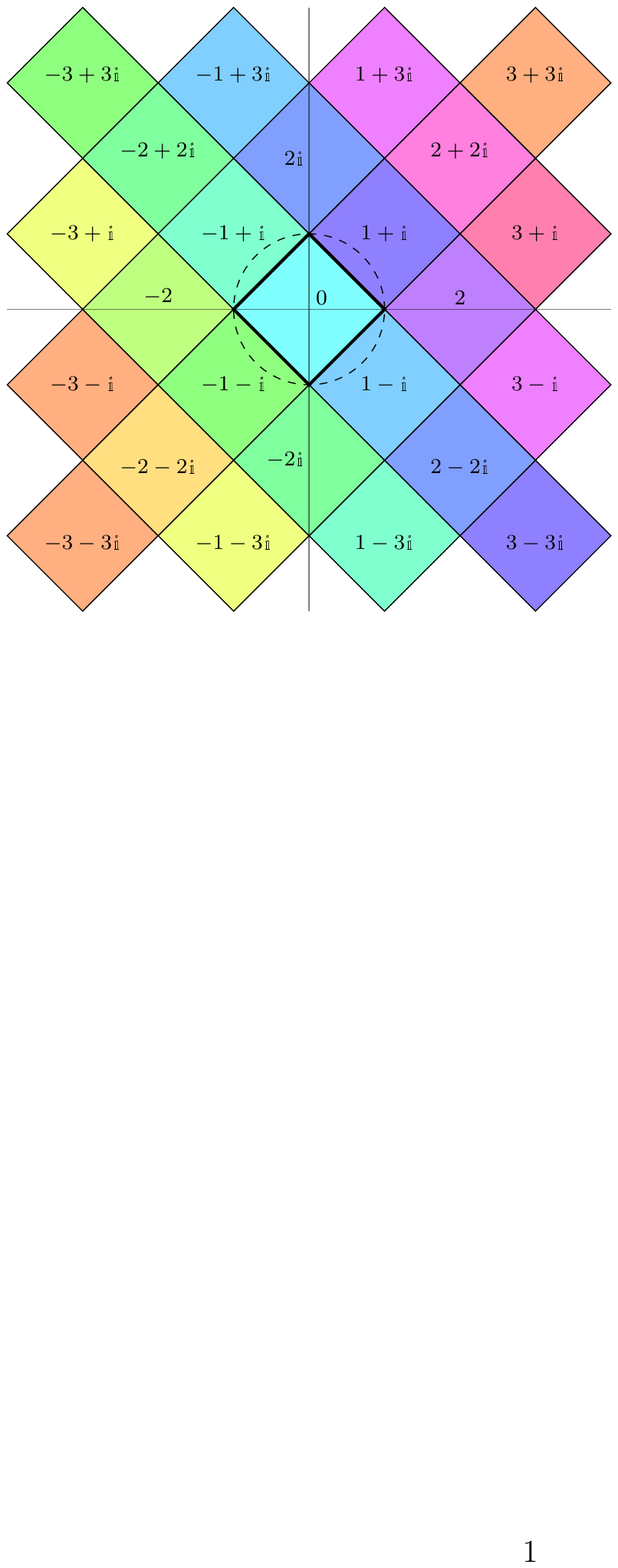} \\[-0.3em]
		(a) Nearest integer (Hurwitz) & (b) Nearest even \\[0.5em]
    	\includegraphics[width=0.42\textwidth]{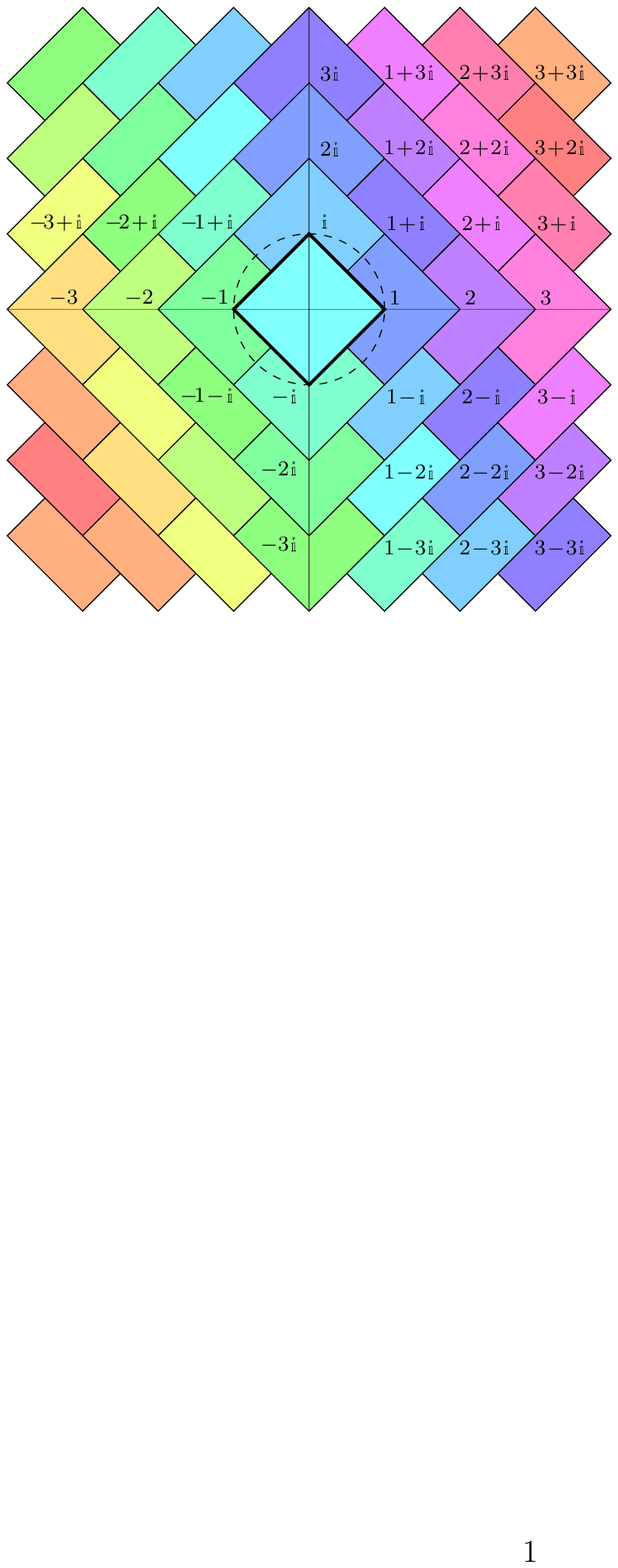} &
    	\includegraphics[width=0.42\textwidth]{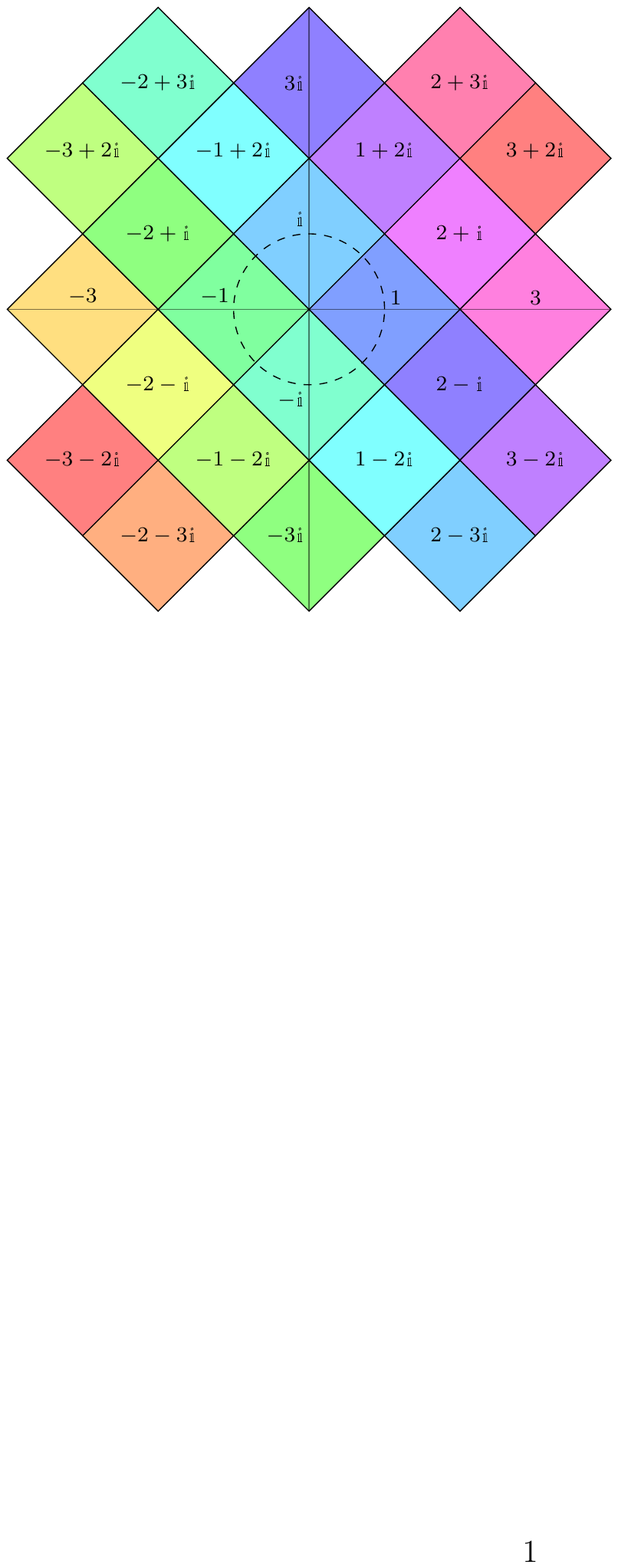} \\[-0.3em]
		(c) Diamond & (d) Nearest odd \\[0.5em]
    	\includegraphics[width=0.42\textwidth]{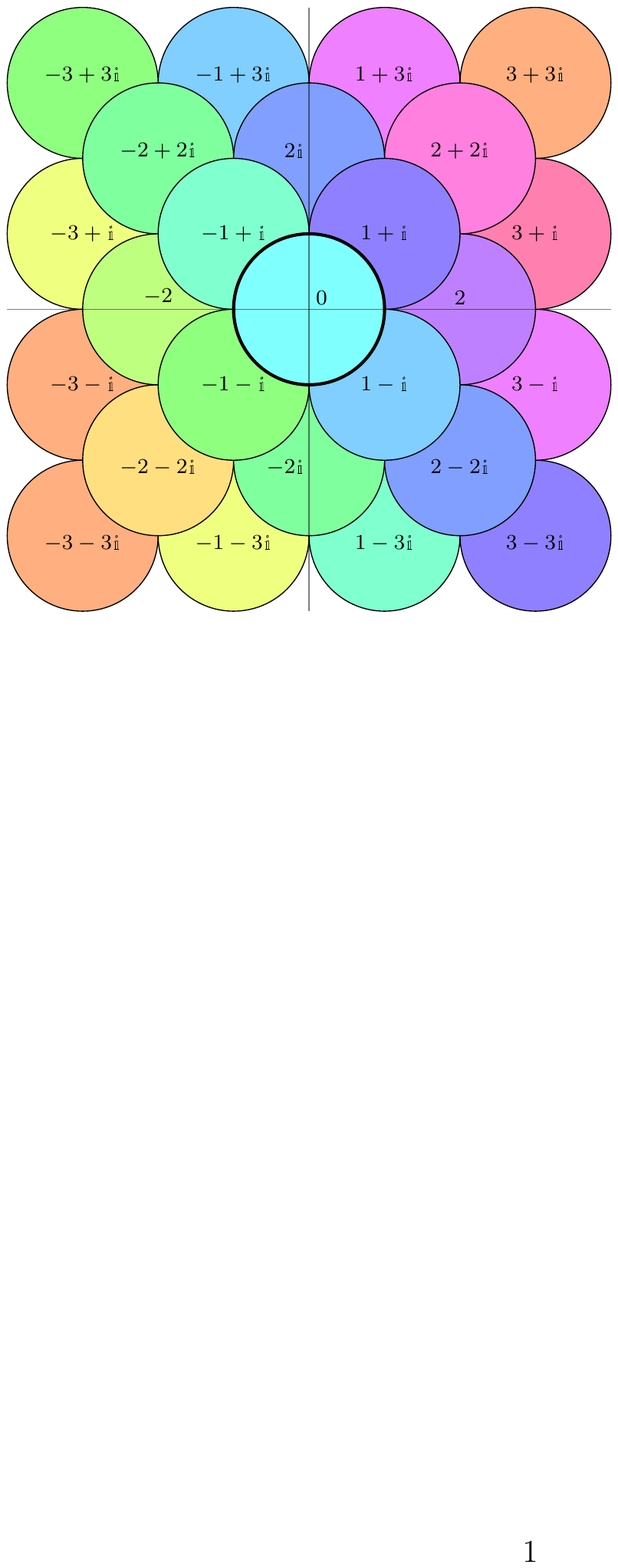} &
    	\includegraphics[width=0.42\textwidth]{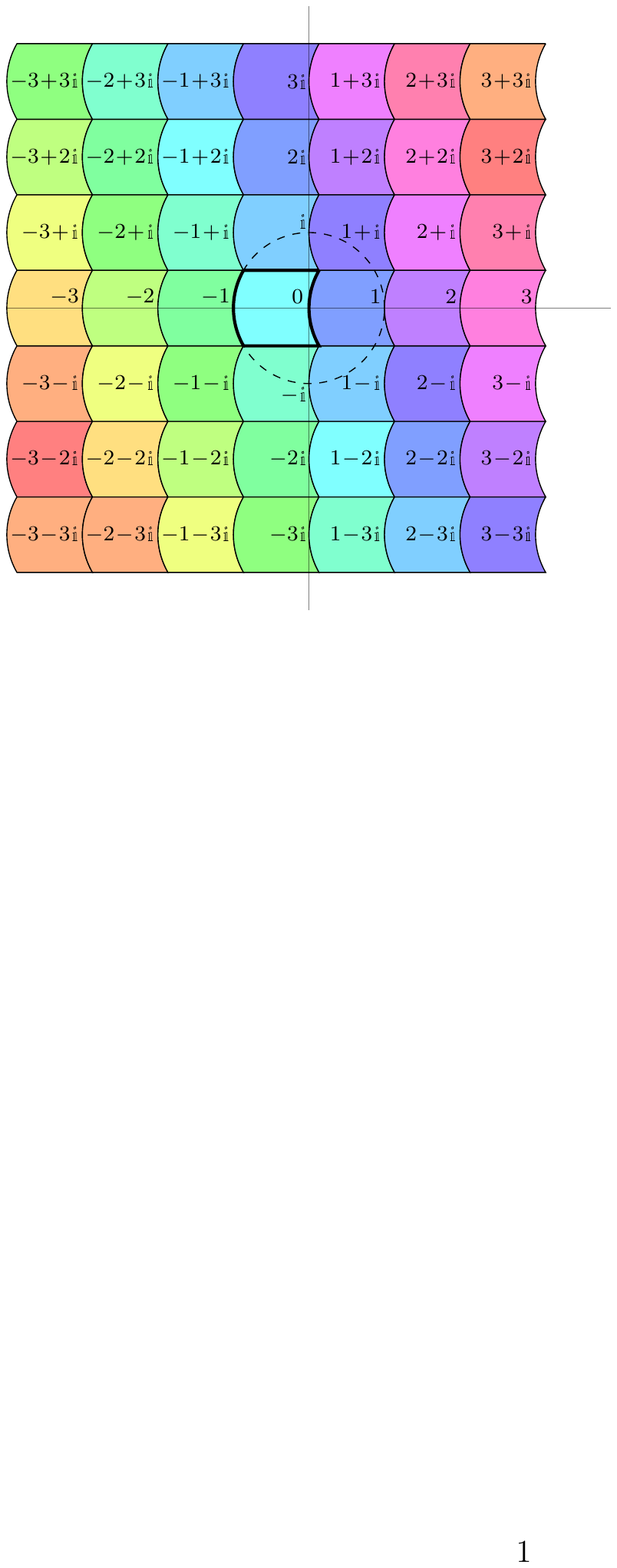} \\[-0.3em]
		(e) Disk & (f) Shifted Hurwitz \\[-0.5em]
	\end{tabular}
    \caption{Regions where $\bra{\cdot}$ takes different values for various algorithms}
    \label{fig regions}
\end{figure}

\renewcommand\temp[1]{\bra{#1}}
\begin{prop} \label{make choice}
Let $X \subset \bar\D$.
If for any $z \in \C \setminus X$ there exists $n(z) \in \N$ such that $t^{n(z)}(z) \in X$, where $t:\C\to\C$ is given by
\[ t(z) = \Piecewise{
	0 & \text{if } z = 0 \\
	z - 1 & \text{if } -\pi/4 \le \arg z < \pi/4 \\
	z - \i & \text{if } \pi/4 \le \arg z < 3\pi/4 \\
	z + 1 & \text{if } 3\pi/4 \le \arg z \text{ or } \arg z < \pi/4 \\
	z + \i & \text{if } -3\pi/4 \le \arg z < -\pi/4,
} \]
then the function 
\[ \temp{z} = \Piecewise{ 0 & \text{if }z \in X \\ z - t^{n(z)}(z) & \text{if }z \notin X } \]
will be a valid choice function.
\end{prop}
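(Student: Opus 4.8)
The plan is to verify the three clauses in the definition of a choice function directly; each falls out of unwinding the construction. I would begin with the observation that necessarily $0\in X$: if not, then $0\in\C\setminus X$, yet $t^{n}(0)=0\notin X$ for every $n$ (since $t(0)=0$), contradicting the hypothesis. Then, for each $z\notin X$, fix $n(z)$ to be the \emph{least} natural number with $t^{n(z)}(z)\in X$; this exists by hypothesis and makes the formula for $\bra{z}$ unambiguous. (In fact this defines $\bra{\cdot}$ on all of $\C$, with $\bra{0}=0$.)

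Next, that $\bra{z}\in\Z[\i]$. The point is that $t$ always shifts a point by a Gaussian integer: directly from the definition, $t(w)-w\in\{0,1,-1,\i,-\i\}\subset\Z[\i]$ for every $w\in\C$. Hence for $z\notin X$ the telescoping identity
\[
	\bra{z}=z-t^{n(z)}(z)=\sum_{k=0}^{n(z)-1}\bigl(t^{k}(z)-t^{k+1}(z)\bigr)
\]
exhibits $\bra{z}$ as a finite sum of elements of $\Z[\i]$, hence in $\Z[\i]$; and $\bra{z}=0\in\Z[\i]$ for $z\in X$. (If one wishes, minimality of $n(z)$ together with $0\in X$ shows that every summand is a unit $\pm1,\pm\i$ and that $\bra{z}\ne0$ precisely when $z\notin X$, so that $\setform{z\in\C}{\bra{z}=0}=X$ — consistent with the remark above, though this is not needed here.)

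Finally, the bound $\abs{z-\bra{z}}\le1$. For $z\in X$ (including $z=0$) we have $\abs{z-\bra{z}}=\abs{z}\le1$ because $X\subset\bar\D$. For $z\notin X$, the construction gives the identity $z-\bra{z}=z-\bigl(z-t^{n(z)}(z)\bigr)=t^{n(z)}(z)$, and $t^{n(z)}(z)\in X\subset\bar\D$ by the choice of $n(z)$; hence $\abs{z-\bra{z}}=\abs{t^{n(z)}(z)}\le1$. This completes the verification.

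I would not expect a genuine obstacle: the statement is essentially a bookkeeping exercise on the definitions. The only two points that merit a moment's attention are making $\bra{\cdot}$ single-valued (one must commit to a particular $n(z)$, e.g.\ the least one) and recognizing that $t$ moves points by Gaussian integers, which is exactly what forces $\bra{z}\in\Z[\i]$. Notably, neither the precise placement of the sector boundaries in the definition of $t$ nor the fact that the shift has modulus exactly $1$ plays any role in the argument; these only serve to ensure that $t\colon\C\to\C$ is a genuine well-defined map.
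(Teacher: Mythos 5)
Your proposal is correct and takes essentially the same route as the paper: where the paper verifies $\bra{z} \in \Z[\i]$ and $\abs{z-\bra{z}} \le 1$ by induction on $n(z)$, you telescope the $t$-orbit, but both arguments rest on exactly the same two facts --- that $t$ moves points by Gaussian (unit) integers, and that $t^{n(z)}(z) \in X \subset \bar\D$ gives $\abs{z - \bra{z}} = \abs{t^{n(z)}(z)} \le 1$. Your additional observations (that $0 \in X$ is forced by the hypothesis, and that one should fix a particular $n(z)$, e.g.\ the least, to make $\bra{\cdot}$ single-valued) are correct refinements of points the paper leaves implicit.
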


\begin{proof}
To prove that $\temp{z}$ is a valid choice function, we need to show that $\temp{z} \in \Z[\i]$ and that $\big\lvert z-\temp{z} \!\big\rvert \le 1$ for all $z \in \C$. 

Since $X \subset \bar\D$, the fact that $\temp{z} = 0$ for $z \in X$ means that both conditions are satisfied for all $z \in X$.

For $z \notin X$, we prove by induction on $n(z)$. 
If $n(z) = 1$, then $z = t(z) + \i^k$ for some $k \in \{0,1,2,3\}$, and so
$ \temp{z} = z - t(z) = \i^k $
is in $\Z[\i]$ and 
$ \big\lvert z-\temp{z} \!\big\rvert = \abs{t(z)} 
\le 1$ because $t(z) \in X \subset \bar\D$.
For $n(z) > 1$, we use that $n(t(z)) = n(z) - 1$ repeatedly until we reach $n({t^{n(z)-1}(z)}) = 1$ and by induction recover that $\temp{z} \in \Z[\i]$ and $\big\lvert z-\temp{z} \!\big\rvert \le 1$ for any $n(z)$.
Thus $\temp{z}$ is a valid choice function.
\end{proof}

If Gaussian integer translates of $K \subset \bar\D$ tile the complex plane without overlap (except possibly on the boundaries of translates of $K$), then~$K$ satisfies the condition of \Cref{make choice}, and the choice function obtained by this construction will be equivalent to 
\[ \label{tile choice} \bra{z} = a \quad\Longleftrightarrow\quad z \in a + K, \quad a \in \Z[\i] \]
except possibly when $z \in \partial(a+K)$.
The nearest integer and shifted Hurwitz algorithms are examples of such integer tilings.

For any choice function $\bra{\cdot}: \C\setminus\{0\} \to \Z[\i]$, the associated \emph{Gauss map} $G: K\to K$ is given by
\< \label{Gauss} G(z) = \frac{-1}z - \bra{ \frac{-1}z } \>
for $z \ne 0$ and $G(0) = 0$.
The Gauss map is ``piecewise continuous'' in the sense that it is continuous on each set 
\< \label{cell} \ang a := \setform{ x \in K }{ \bra{-1/x} = a }. \>

Given $z \in \C$, we can construct the digit sequence $\{a_n\}$ by 
\< \label{a} a_0 = \bra{z}\qquad a_n = \bra{-1/G^{n-1}(z-a_0)}\quad\forall~n \ge 1. \>
After then defining $\{p_n\}$ and $\{q_n\}$ by \eqref{pq}, the sequence $\{\frac{p_n}{q_n}\}$ will either terminate (in which case the final term $\frac{p_n}{q_n} = z$) or will converge to $z$. Terminating sequences only occur for $z \in \Qb[\i]$, see \cite{DN14}, so we will focus only on $z \in \C \setminus \Qb[\i]$.

\begin{defn} \label{even odd}
	A complex number is called \emph{rational} if it is in $\Qb[\i]$ and \emph{irrational} otherwise. A complex number is called \emph{even} (respectively, \emph{odd}) if its real and imaginary parts are both integers and their sum is even (respectively, odd).
\end{defn}

\FloatBarrier
\subsection{Natural extension of the Gauss map}

Defining the notation 
\< \begin{split} \label{S and T}
	S(z) &= -1/z \\
	T^a(z) &= z + a \qquad\text{for any }a\in \Z[\i],
\end{split} \>
we can write the Gauss map $G$ for any choice function $\bra{\cdot}$ as 
\[ G(z) = T^{-a}Sz, \qquad a = \bra{Sz}. \]

Let $\Delta = \setform{ (z,w) \in \C^2 }{ z = w }$.
The natural extension of $G$ is the map ${\hat G} : \C^2 \setminus \Delta \to \C^2 \setminus \Delta$ given by
\begin{equation} \label{natural extension}
    {\hat G}(z,w) = (T^{-a}Sz,T^{-a}Sw), \qquad a = \bra{Sz}.
\end{equation}
In coordinates $(x,y) = (z,Sw)$ or $(u,v) = (Sw,Sz)$ this transformation is given by
\[ \tag{\ref{natural extension}$'$} \begin{split}
    \hat F(x,y) &= (T^{-a}Sx, ST^{-a}y), \qquad a = \bra{Sx} \\
    \hat R(u,v) &= (ST^{-a}u, ST^{-a}v), \qquad a = \bra{v}.
\end{split} \]

\begin{prop} \label{restrict to disk}
	Let $z,w \in \C$ with $z \in K \setminus \Qb[\i]$ and $z \ne w$. 
	Then there exists $n < \infty$ such that ${\hat G}^n(z,w) \in K \times (\C \setminus \bar\D)$.
\end{prop}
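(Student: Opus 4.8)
The plan is to follow the orbit $(z_n,w_n):=\hat G^n(z,w)$, exploiting that $\hat G$ applies the \emph{same} Möbius transformation $T^{-a}S$ (with $a=\bra{Sz}$) to both coordinates at each step. Since $G$ maps $K$ into $K$ and $z\in K$, the first coordinate is $z_n=G^n(z)\in K\subseteq\bar\D$ for every $n$, and since $z\notin\Qb[\i]$ the orbit never meets $\Qb[\i]$ (in particular never $0$), so $\hat G^n(z,w)$ is defined for all $n$. Writing $a_k=\bra{-1/z_{k-1}}$ for the digits read off along the orbit and $p_k/q_k$ for the associated convergents of $z$ (equation~\eqref{pq}, with $a_0:=0$), I would first prove by a short induction from the recursion $z_{k+1}=-1/z_k-a_{k+1}$ — which holds verbatim for $w$ — that, for all $n$,
\[
 z_n=\frac{p_n-q_nz}{q_{n-1}z-p_{n-1}},\qquad
 w_n=\frac{p_n-q_nw}{q_{n-1}w-p_{n-1}} ,
\]
the essential feature being that the \emph{same} Gaussian integers $p_n,q_n$ govern both coordinates; these are genuine Möbius maps because $p_nq_{n-1}-p_{n-1}q_n=\pm1$.

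From the first identity, $|z_n|=|q_nz-p_n|/|q_{n-1}z-p_{n-1}|$, so $z_n\in\bar\D$ forces the sequence $\bigl(|q_nz-p_n|\bigr)_n$ to be non-increasing, hence $|q_nz-p_n|\le|q_0z-p_0|=|z|$ for all $n$. Then I would argue by contradiction: if $w_n\in\bar\D$ for \emph{every} $n$, the identical estimate for $w$ gives $|q_nw-p_n|\le|w|$ for all $n$, and subtracting,
\[
 |z-w|\,|q_n|=\bigl|(q_nz-p_n)-(q_nw-p_n)\bigr|\le|q_nz-p_n|+|q_nw-p_n|\le|z|+|w| ,
\]
so $|q_n|$, and then $|p_n|\le|z|\,|q_n|+|q_nz-p_n|$, stays bounded. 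But then $(p_n,q_n)$ takes only finitely many values in $\Z[\i]^2$, contradicting $p_n/q_n\to z$ (the convergents of an irrational number converge to it, as recalled after~\eqref{a}). Hence $w_n\notin\bar\D$ for some $n$, which is exactly $\hat G^n(z,w)\in K\times(\C\setminus\bar\D)$.

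The inductive verification of the two closed forms, together with the constancy of $p_nq_{n-1}-p_{n-1}q_n$, is routine. The step I expect to need real care is the behaviour near the unit circle. One must check that $q_n\ne0$ throughout (the non-vanishing condition invoked via \cite{DN14}; the companion denominators $q_{n-1}z-p_{n-1}$ cannot vanish since $z\notin\Qb[\i]$), and — more delicately — that the argument survives when $z$, or some $z_k$, actually lies on $\partial\D$, which can occur whenever $K$ meets the unit circle. There the inequality $|q_nz-p_n|\le|q_{n-1}z-p_{n-1}|$ need not be strict, but this is harmless, since the contradiction only needs $|q_n|$ bounded, not $|q_nz-p_n|$ to decay. (When $K\subseteq B(0,r)$ with $r<1$, as for the nearest-integer algorithm, one even has $\prod_k|z_k|\to 0$ geometrically, so $|q_n|\to\infty$ and the proof is shorter.) Finally, I would flag the bookkeeping point that if $w$ happens to be a convergent $p_m/q_m$ of $z$ the relevant denominator vanishes and $w_m=\infty$ escapes $\bar\D$ early, so the statement should be read at the last index where $\hat G^n(z,w)$ is defined.
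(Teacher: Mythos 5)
Your proof is correct, but it takes a genuinely different route from the paper's. The paper passes to the coordinates $(u,v)=(Sw,Sz)$, writes $u_{k+1}=\tfrac{q_{k-1}}{q_k}+\tfrac{C_k}{q_k^2}$ with $C_k=(p_k/q_k-u)^{-1}$ for the convergents $p_k/q_k$ of $v$, and then exhibits an explicit escape time: combining the triangle inequality with the Gaussian-integer gap estimate of \Cref{diff of norms} and the (asserted, not proved) unboundedness of $\{\abs{q_k}\}$, it finds $n$ with $\abs{q_{n+1}}>\abs{q_n}>(M+1)\sqrt2$ and concludes $\abs{u_n}<1$ directly. You stay in the original coordinates and argue softly by contradiction: if the second coordinate never left $\bar\D$, the same M\"obius closed forms would give $\abs{q_nz-p_n}\le\abs z$ and $\abs{q_nw-p_n}\le\abs w$ simultaneously, forcing $\abs{q_n}\le(\abs z+\abs w)/\abs{z-w}$ to be bounded, hence $(p_n,q_n)$ to take finitely many values, which is incompatible with $p_n/q_n\to z\notin\Qb[\i]$. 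What your route buys: no need for \Cref{diff of norms}, and no need to take the unboundedness of $\{q_k\}$ as an input (your argument gets by without it); what the paper's route buys is a quantitative criterion for when escape has occurred, which is not needed for the only later use of the proposition, in \Cref{attractor}. Both proofs lean equally on the convergence of the convergents of an irrational number, stated after \eqref{a}; since you base your convergents at $a_0=0$ (digits read along the orbit of $z$ itself), you should add the one-line remark that this is the paper's statement applied to $-1/z$ and then composed with $S$ --- relevant in particular for the nearest odd algorithm, where $\bra{z}\ne0$ on $K$. Finally, the edge case you flag ($w$ equal to some $p_m/q_m$, so the second coordinate passes through $0$ and then $\infty$) is glossed over by the paper as well (there it makes some $C_k$ infinite, repaired by requiring $\abs{C_k}<M$ only for large $k$); it concerns only countably many $w$ and is immaterial for the almost-everywhere applications, so your flag is apt even though your ``last defined index'' reading is only an informal fix.
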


\begin{proof}
	Use the coordinates $(u,v) = (Sw,Sz)$. We want to prove that ${\hat R}^n(u,v)$ is in $\D \times S(K)$.
	Note that $z \ne w$ means $u \ne v$, and $z \notin \Qb[\i]$ means $v = -1/z$ is also irrational.
	
	Let $(u_k,v_k) = {\hat R}^k(u,v)$, that is,
    \begin{align*}
    	u_{k+1} &= S T^{-a_k} \cdots S T^{-a_1} S T^{-a_0} u, \\
    	v_{k+1} &= S T^{-a_k} \cdots S T^{-a_1} S T^{-a_0} v,
    \end{align*}
	where $a_k = \bra{v_k}$. Because $v$ is irrational, these sequences do not terminate.
	By construction, all $v_n \in S(K)$, so $(u_n,v_n) \in \D \times S(K)$ is equivalent to $\abs{u_n} < 1$.
	
    For all $k \ge 1$ we have
    \begin{align*}
    	u &= T^{a_0} S T^{a_1} S \cdots T^{a_k} S(u_{k+1}) 
    	= \frac{p_k u_{k+1} - p_{k-1}}{q_k u_{k+1} - q_{k-1}},
    \end{align*}
    where $p_k/q_k$ are the convergents of $v$,
    and thus
    \[
    	u_{k+1} = \frac{q_{k-1} u - p_{k-1}}{q_k u - p_k} 
    	 = \frac{q_{k-1}}{q_k} + \frac1{q_k^2(\frac{p_k}{q_k} - u)}.
	 \]
	 Let $C_k = \frac1{p_k/q_k - u}$. Because $\frac{p_k}{q_k} \to v \ne u$, the sequence $\{C_k\}$ converges to the finite value $\frac1{v-u}$. 
	 Applying the Triangle Inequality to $u_{k+1} = \frac{q_{k-1}}{q_k} + \frac{C_k}{q_k^2}$ gives
	 \begin{align*}
	 	\abs{u_{k+1}}
    	\le \abs{\frac{q_{k-1}}{q_k}} + \abs{\frac{C_k}{q_k^2}} 
    	= \frac{\abs{q_{k-1}} + \frac{\abs{C_k}}{\abs{q_k}}}{\abs{q_k}} 
		= 1 - \frac{\abs{q_k} - \abs{q_{k-1}} - \frac{\abs{C_k}}{\abs{q_k}}}{\abs{q_k}}.
	\end{align*}
	Therefore $\abs{u_{k+1}} < 1$ is implied by
	\[
		\frac{\abs{q_k} - \abs{q_{k-1}} - \frac{\abs{C_k}}{\abs{q_k}}}{\abs{q_k}} > 0 
	\]
	or, equivalently, by
	\begin{equation} \label{good for lemma}
		\abs{q_k} - \abs{q_{k-1}} > \frac{\abs{C_k}}{\abs{q_k}}.
	\end{equation}
	By \Cref{diff of norms} below, \eqref{good for lemma} is true whenever $\abs{q_k} > \abs{q_{k-1}} > (\abs{C_k}+1)\sqrt2$. Since $\{C_k\}$ converges to a finite value, there exists~$M \in \R$ such that $\abs{C_k} < M$ for all $k$. Since $\{q_k\}$ is unbounded, there exists~$n \in \N$ for which \[ \abs{q_{n+1}} > \abs{q_n} > (M+1)\sqrt2 > (\abs{C_{n-1}}+1)\sqrt2, \] and for this $n$ we have that $\abs{u_n}<1$ and therefore ${\hat R}^n(u,v) \in \D \times S(K)$.
\end{proof}

\begin{lem} \label{diff of norms}
	Let $u,v \in \Z[\i]$ and $C \ge 0$. If $\abs{u} > \abs{v} > (C+1)\sqrt2$, then $\abs{u} - \abs{v} > \frac{C}{\abs{u}}$.
\end{lem}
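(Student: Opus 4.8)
The feature that makes the statement plausible — as opposed to a false inequality about arbitrary complex numbers — is that $u,v\in\Z[\i]$, so $\abs{u}^2 = (\Re u)^2+(\Im u)^2$ and $\abs{v}^2$ are non-negative \emph{integers}. Hence $\abs{u}>\abs{v}\ge 0$ forces $\abs{u}^2-\abs{v}^2\ge 1$, and the first step of the plan is to put the target inequality in a form where this gap is visible: multiplying $\abs{u}-\abs{v}>C/\abs{u}$ through by $\abs{u}>0$ and using $\abs{u}-\abs{v}=\dfrac{\abs{u}^2-\abs{v}^2}{\abs{u}+\abs{v}}$, the claim becomes equivalent to
\[ \frac{\abs{u}\,\bigl(\abs{u}^2-\abs{v}^2\bigr)}{\abs{u}+\abs{v}} \;>\; C . \]

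By the integrality bound the numerator is at least $\abs{u}$, and $\abs{v}<\abs{u}$ gives $\abs{u}+\abs{v}<2\abs{u}$, so the left-hand side already exceeds $\tfrac12$ for free; this settles the case $C\le\tfrac12$ with no hypothesis at all. For larger $C$ one clearly cannot discard the factor $\abs{u}^2-\abs{v}^2$, and this is where the hypothesis $\abs{v}>(C+1)\sqrt2$ must enter in an essential way — either to sharpen the lower bound on $\abs{u}^2-\abs{v}^2$, or to show that when that difference is as small as $1$ the moduli $\abs{u},\abs{v}$ are nonetheless large enough (relative to $C$) for the inequality to persist. The constant $\sqrt2$ ought to appear through the two-dimensional geometry of the lattice $\Z[\i]$, e.g.\ from $\abs{u}+\abs{v}\le\sqrt2\,\sqrt{\abs{u}^2+\abs{v}^2}$, or from a comparison of how close two Gaussian integers of comparable size can be in modulus.

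The step I expect to be the real obstacle is precisely this last one: coaxing the hypothesis into the chain of inequalities so that the factor $\sqrt2$ comes out exactly rather than something slightly worse, and in particular handling the near-extremal configurations in which $\abs{u}^2=\abs{v}^2+1$. Once the correct intermediate estimate is pinned down, the remainder is a short and routine computation.
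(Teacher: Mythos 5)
Your plan stops exactly where the proof has to happen, so as it stands there is a genuine gap: the reduction to $\frac{\abs{u}\,(\abs{u}^2-\abs{v}^2)}{\abs{u}+\abs{v}}>C$, the integrality bound $\abs{u}^2-\abs{v}^2\ge 1$, and the observation that this already settles $C\le\tfrac12$ are all correct, but the step you defer --- using $\abs{v}>(C+1)\sqrt2$ to handle the near-extremal case $\abs{u}^2=\abs{v}^2+1$ --- is left entirely open. Worse, that step cannot be carried out: when $\abs{u}^2-\abs{v}^2=1$ your left-hand side equals $\frac{\abs{u}}{\abs{u}+\abs{v}}<1$, and enlarging the moduli only pushes it toward $\tfrac12$, so no lower bound on $\abs{v}$ can rescue the inequality once $C\ge1$. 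Such configurations exist in $\Z[\i]$ at every scale ($v=k$, $u=k+\i$), and they give counterexamples to the lemma as stated: for $u=3+\i$, $v=3$, $C=1$ the hypothesis holds ($\sqrt{10}>3>2\sqrt2$), but $\sqrt{10}-3>1/\sqrt{10}$ would mean $9>3\sqrt{10}$, which is false. So the ``real obstacle'' you flagged is not a technical hurdle to be pinned down later; it is a regime in which the claimed inequality actually fails.

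For comparison, the paper argues differently: it normalizes $\Re u\ge\Im u\ge0$, writes $u_1=\Re u$, $u_2=\Im u$, deduces $\sqrt{\abs{u}^2-2u_1+1}<\abs{u}-C/\abs{u}$ from $\abs{u}>(C+1)\sqrt2$, and then asserts that the largest possible norm of $v\in\Z[\i]$ with $\abs{v}<\abs{u}$ is $\sqrt{(u_1-1)^2+u_2^2}=\abs{u-1}$. That assertion is precisely where the lattice geometry is supposed to enter, and it fails on the same example: for $u=3+\i$ one has $\abs{u-1}=\sqrt5$, yet $v=3$ satisfies $\abs{v}=3<\sqrt{10}=\abs{u}$. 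A Gaussian integer of smaller modulus need not lie as far below $\abs{u}$ as $u-1$ does. In other words, your reduction, pushed one more step, exposes the defect more cleanly than the paper's own argument; any repair (the lemma is invoked in \Cref{restrict to disk} with $u=q_k$, $v=q_{k-1}$) must use more about the continued fraction denominators than the bare fact that they are Gaussian integers with increasing moduli.
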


\begin{proof}
\providecommand\Frac[2]{\frac{#1}{#2}}
	Assume by symmetry that $\Re u \ge \Im u \ge 0$, and denote $u_1 = \Re u$, $u_2 = \Im u$. Then $\abs u \le u_1 \sqrt 2$. Combining this with the assumption $\abs u > (C+1)\sqrt2$, we have
	\[ u_1 \sqrt2 ~\ge~ \abs u ~>~ (C+1) \sqrt2, \]
	and so $1 < u_1 - C$. 
	Therefore \providecommand\temp{} \renewcommand\temp{\abs{u}}
	\begin{align}
		%1 &< (u_1 - C) \nonumber \\
    	1 &< 2(u_1 - C) + {C^2}/{\temp^2} \nonumber \\
    	%- 2u_1 + 1 &< -2C + \Frac{C^2}{\temp^2} \nonumber \\
    	\temp^2 - 2u_1 + 1 &< \temp^2 - 2C + \Frac{C^2}{\temp^2} \nonumber \\
    	%\temp^2 - 2u_1 + 1 &< \left(\temp-\Frac C\temp\right)^2 \nonumber \\%\end{align} and thus \begin{align}
    	\sqrt{\temp^2 - 2u_1 + 1} &< \temp-\Frac C\temp. \label{final ineq}
	\end{align}
	Since $u = u_1 + u_2 \i$ with $u_1 \ge u_2$, the largest possible norm of $v \in \Z[\i]$ with $\abs v < \abs u$ is given by
	\begin{align*}
		\sqrt{ (u_1-1)^2 + u_2^2 } \,
		&= \sqrt{ (u_1^2-2u_1+1) - u_1^2 + u_1^2 + u_2^2 } \\
		%&= \sqrt{ u_1^2 - 2 u_1 + 1 - u_1^2 + {\abs u}^2 } \\
		&= \sqrt{ {\abs u}^2 - 2 u_1 + 1 } \\
		&< \abs u - \frac{C}{\abs u} \qquad \text{by \eqref{final ineq}.}
	\end{align*}
	Therefore $\abs v < \abs u - \frac{C}{\abs u}$, or, equivalently, $\abs u - \abs v > \frac{C}{\abs u}$.
\end{proof}

%\begin{remark}
%	By \Cref{restrict to disk} the orbit under ${\hat G}$ of every irrational point will enter $K \times (\C \setminus \D)$. A priori the orbit may then leave this product, although for many common algorithms $K \times (\C \setminus \D)$ is indeed a trapping region for ${\hat G}$.
%\end{remark}

\section{The finite building property} \label{sec fb}

In the case of real $(a,b)$-con\-tin\-ued fractions, the orbits of the two discontinuity points $a$ and $b$ of the map
\[ f_{a,b}(x) = \left\{ \begin{array}{ll} x+1 &\text{if } x < a \\ -1/x &\text{if } a \le x < b \\ x-1 &\text{if } x \ge b \end{array}\right. \label{fab} \]
collide after finitely many iterations, and this ``cycle property'' is heavily used in the analysis of the real-valued Gauss map and its natural extension in \cite{KU10,KU12}. In the complex setting, $K \subset \C$ replaces the interval $[a,b)$, but since $\partial K$ is not a finite set of points, tracking its orbit is significantly more complicated. The ``finite building property'' described in \Cref{defn partition property} serves as a replacement for the cycle property.

\begin{defn} \label{defn buildable}
	Let $\Cc$ be a collection of closed sets whose boundaries each have zero measure.
	A set is called \emph{buildable} from $\Cc$ if it is equal, up to measure zero, to some union of elements of $\Cc$.
\end{defn}
\begin{defn} \label{defn partition property}
	A continued fraction algorithm with Gauss map $G : K \to K$ satisfies the \emph{finite building property} if there exists a finite partition $\Pc = \{ K_1, \ldots, K_N \}$ of $K$ with $N>1$ such that each $G(K_i)$ is buildable from $\Pc$.
\end{defn}

\begin{remark}
	The term \emph{partition} here means that the \emph{interiors} of $K_i$ and $K_j$ must be disjoint for $i\ne j$. In some works, partition elements must be truly disjoint, but Markov partitions are ``partitions'' in exactly this sense.
\end{remark}

There are some ``shortcuts'' one may use to prove that an algorithm satisfies the finite building property without directly testing \Cref{defn partition property}. The following statements give sufficient conditions for an algorithm to satisfy the finite building property.

\begin{prop} \label{pre tiling shortcut}
	Let $\Pc = \{K_1,\ldots,K_N\}$ be a partition of $K$. If each $S(K_i)$ can be written as a union $\bigcup_{\alpha} W_\alpha$ such that $\bra{\cdot}$ is constant on each $W_\alpha$ and each $W_\alpha = \bra{W_\alpha} + K_j$ for some $K_j \in \Pc$, then the associated continued fraction algorithm satisfies the finite building property.
\end{prop}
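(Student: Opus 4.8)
The plan is to verify \Cref{defn partition property} directly for the partition $\Pc = \{K_1,\dots,K_N\}$, i.e.\ to show that each $G(K_i)$ is buildable from $\Pc$. Recall that $G = T^{-a}S$ on the cell $\ang a$, so on the piece $K_i$ the map $G$ acts by first applying $S$ and then subtracting the appropriate Gaussian integer. The hypothesis gives us a decomposition $S(K_i) = \bigcup_\alpha W_\alpha$ on which $\bra\cdot$ is constant, say $\bra{\cdot}\!\equiv a_\alpha$ on $W_\alpha$, and $W_\alpha = a_\alpha + K_{j(\alpha)}$ for some index $j(\alpha)$. The idea is that $G$ restricted to $S^{-1}(W_\alpha) \subset K_i$ is exactly $z \mapsto Sz - a_\alpha$, which sends $S^{-1}(W_\alpha)$ onto $W_\alpha - a_\alpha = K_{j(\alpha)}$.

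Carrying this out, I would first note that the sets $S^{-1}(W_\alpha)$ cover $K_i$ (since $S$ is a bijection on $\C\setminus\{0\}$ and the $W_\alpha$ cover $S(K_i)$), so $K_i = \bigcup_\alpha S^{-1}(W_\alpha)$. Applying $G$ and using that $G$ agrees with $T^{-a_\alpha}S$ on each $S^{-1}(W_\alpha)$, we get
\[
	G(K_i) = \bigcup_\alpha G\big(S^{-1}(W_\alpha)\big) = \bigcup_\alpha T^{-a_\alpha}\big(W_\alpha\big) = \bigcup_\alpha \big(W_\alpha - a_\alpha\big) = \bigcup_\alpha K_{j(\alpha)}.
\]
This exhibits $G(K_i)$ as a union of elements of $\Pc$, which is exactly buildability (in fact buildability on the nose, not merely up to measure zero). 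Since this holds for every $i$, the finite building property holds with the same partition $\Pc$.

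The one point requiring a little care — and the main obstacle, though a minor one — is the behavior on boundaries and at the point $0$. The decomposition $S(K_i) = \bigcup W_\alpha$ need not have disjoint interiors matching those of the cells $\ang{a}$, and $\bra{\cdot}$ may be genuinely multivalued on $\partial W_\alpha$; also $S$ is undefined at $0$ and $G(0)=0$ by convention. I would handle this by working up to measure zero throughout: the boundaries $\partial K_i$ and $\partial W_\alpha$ have zero measure (the $K_i$ are assumed to be of the type allowed in \Cref{defn buildable}, being a partition of $K$, and $W_\alpha = a_\alpha + K_{j(\alpha)}$ inherits this), so the identity $G(K_i) = \bigcup_\alpha K_{j(\alpha)}$ holds up to a null set, which is all \Cref{defn buildable} demands. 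Finally, I would remark that this proposition applies in particular to integer-tiling algorithms such as the nearest integer and shifted Hurwitz: taking $\Pc$ to be the common refinement of $K$ with the translated tiles $\{a + K : a \in \Z[\i]\}$ intersected with $S(K)$ produces a partition meeting the hypothesis, which is how the name ``pre-tiling shortcut'' should be understood.
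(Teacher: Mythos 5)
Your proposal is correct and follows essentially the same route as the paper: decompose $S(K_i)$ into the pieces $W_\alpha = a_\alpha + K_{j(\alpha)}$ on which $\bra{\cdot}$ is constant, observe that $G$ acts there as $T^{-a_\alpha}S$, and conclude $G(K_i) = \bigcup_\alpha K_{j(\alpha)}$, which is buildability. Your extra care about boundaries and the point $0$ is a harmless refinement of the same argument, not a different method.
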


\begin{proof}
	\def\temp{A}
	Fix $i$, $1\le i\le N$, and let $\temp \subset \Z[\i] \times \{ 1,\ldots,N \}$ be the index set for $\alpha$, that is,
	\[ S(K_i) = \bigcup_{(a,j) \in \temp} a + K_j \]
	with $\bra{a + K_j} = a$ for $(a,j) \in \temp$.
	Then we immediately have that
	\[ G(K_i) =  \bigcup_{(a,j) \in \temp} T^{-a}(a + K_j) = \bigcup_{(a,j) \in \temp} K_j \]
	is buildable from $\Pc$.
\end{proof}

\begin{cor} \label{tiling shortcut}
	Suppose $K$ is equal to the closure of its interior and that there exists a set $Z \subseteq \Z[\i]$ such that if $a \in Z$ and $w$ is in the interior of $a + K$ then $\bra w = a$.
	 
	Let $\{K_1,\ldots,K_N\}$ be a partition of $K$. If each $S(K_i)$ can be written as a union of sets of the form $a + K_j$ with $a \in Z$, then the associated continued fraction algorithm satisfies the finite building property.
\end{cor}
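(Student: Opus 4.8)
The plan is to reduce this to \Cref{pre tiling shortcut}. The only difference between the two statements is that \Cref{pre tiling shortcut} asks $\bra{\cdot}$ to be genuinely constant on each piece $W_\alpha$ of the decomposition of $S(K_i)$, whereas here the hypothesis on $Z$ pins $\bra{\cdot}$ down only on the \emph{interior} of each translate $a+K$. Since ``buildable'' (\Cref{defn buildable}) is a notion defined up to sets of measure zero, this difference is invisible, so I would rerun the short computation from the proof of \Cref{pre tiling shortcut}, checking each equality only modulo null sets.

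In detail, fix $i$ with $1 \le i \le N$ and take the given decomposition
\[ S(K_i) = \bigcup_\alpha \bigl( a_\alpha + K_{j_\alpha} \bigr), \qquad a_\alpha \in Z, \]
where $\alpha$ ranges over an at most countable index set (it may be infinite, since $S(K_i)$ is unbounded when $0 \in K_i$, but every piece $a_\alpha + K_{j_\alpha}$ is bounded). The first step is the claim that, for each $\alpha$, $\bra{w} = a_\alpha$ for almost every $w \in a_\alpha + K_{j_\alpha}$. Since $K_{j_\alpha} \subseteq K$ we have $a_\alpha + K_{j_\alpha} \subseteq a_\alpha + K$, so the hypothesis on $Z$ gives $\bra{w} = a_\alpha$ for every $w$ in the interior of $a_\alpha + K$; the remaining points of $a_\alpha + K_{j_\alpha}$ lie in $\partial(a_\alpha + K) = a_\alpha + \partial K$. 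As $\{K_1, \ldots, K_N\}$ is a partition of $K$ whose elements must have boundary of measure zero (otherwise ``buildable from $\{K_1,\ldots,K_N\}$'' is not meaningful in the sense of \Cref{defn buildable}), we have $\partial K \subseteq \bigcup_i \partial K_i$, which is null, and the claim follows.

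Granting the claim, I would compute $G(K_i)$ as in the proof of \Cref{pre tiling shortcut}. Substituting $\zeta = Sz$ (so $\zeta$ runs over $S(K_i)$ as $z$ runs over $K_i$) gives $G(K_i) = \{\, T^{-\bra{\zeta}}\zeta : \zeta \in S(K_i) \,\}$, and on the piece $a_\alpha + K_{j_\alpha}$ the map $\zeta \mapsto T^{-\bra{\zeta}}\zeta$ agrees, off a null set, with the translation $T^{-a_\alpha}$; therefore the image of $a_\alpha + K_{j_\alpha}$ under that map equals $T^{-a_\alpha}(a_\alpha + K_{j_\alpha}) = K_{j_\alpha}$ up to measure zero. (On the exceptional set $a_\alpha + \partial K$ the function $\bra{\cdot}$ takes only finitely many values, so the image is a finite union of isometric copies of a null set, hence null.) Taking the union over $\alpha$, $G(K_i)$ equals $\bigcup_\alpha K_{j_\alpha}$ up to measure zero, a union of elements of $\Pc$, so $G(K_i)$ is buildable from $\Pc$. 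Since $i$ was arbitrary and $N > 1$, the algorithm satisfies the finite building property.

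The main (and essentially only) obstacle is the measure-zero bookkeeping in the last two steps: one must verify that ``$\bra{\cdot} = a$ on $\operatorname{int}(a + K)$'' upgrades to ``$\bra{\cdot} = a$ a.e. on $a + K_j$'' — which is precisely where $\partial K$ being null is used, and where the regularity hypothesis $K = \overline{\operatorname{int} K}$ keeps the translates non-degenerate — and that the union, over a possibly infinite index set, of the exceptional sets pushed forward by the piecewise translation $\zeta \mapsto T^{-\bra{\zeta}}\zeta$ is still null. Everything else is a direct transcription of \Cref{pre tiling shortcut}.
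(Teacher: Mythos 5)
Your proposal is correct and matches the paper's intent: the corollary is stated without a separate proof precisely because it reduces to \Cref{pre tiling shortcut} by taking $W_\alpha = a + K_j$, with the only wrinkle being that $\bra{\cdot}$ is pinned down only on interiors of translates of $K$. Your measure-zero bookkeeping (using $\partial K \subseteq \bigcup_i \partial K_i$ and the fact that buildability is an up-to-null-sets notion) is exactly the intended way to absorb that wrinkle, so this is essentially the paper's argument spelled out.
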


Note that the conditions of \Cref{tiling shortcut} are satisfied for the nearest integer, nearest even, nearest odd, and shifted Hurwitz algorithms, all of which can have translates of $K$ tile the complex plane.

\begin{prop} \label{Markov shortcut} Let $\{K_1,\ldots,K_N\}$ be a partition of $K$ and recall the notation $\ang a$ from \eqref{cell}. If 
\begin{enumerate}
	\item for each $a \in \Z[\i]$ there is an $i$ such that $\ang a \subset K_i$, and 
	\item the set $S(\ang a)$ can be written as a union of sets of the form $a + K_j$,
\end{enumerate}
then the associated continued fraction algorithm satisfies the finite building property. \end{prop}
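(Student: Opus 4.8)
\smallskip
\noindent\textbf{Proof proposal.}
The plan is to recognize that hypotheses (1) and (2) together say exactly that $\Pc=\{K_1,\dots,K_N\}$ is a Markov partition sitting \emph{above} the cell decomposition $K=\bigcup_{a\in\Z[\i]}\ang a$ on which $G$ acts by a single Möbius map followed by a translation. Hypothesis (1) says $\Pc$ is coarser than the cell partition, so each $K_i$ is assembled out of whole cells; hypothesis (2) says $G$ carries each cell to a union of elements of $\Pc$. Chaining these two facts gives that $G(K_i)$ is a union of $K_j$'s, which is precisely the finite building property. This runs parallel to the proof of \Cref{pre tiling shortcut}, of which it is really a repackaging.

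In detail, I would first note that the cells cover $K$: every $x\in K\setminus\{0\}$ lies in $\ang a$ for $a=\bra{-1/x}$. By (1) each nonempty cell lies inside a single $K_i$, so setting $\ExtAlpha_i=\setform{a\in\Z[\i]}{\ang a\subseteq K_i}$ and using that the $K_i$ have pairwise disjoint interiors and cover $K$, I would conclude
\[ K_i=\bigcup_{a\in\ExtAlpha_i}\ang a \qquad\text{up to a null set.} \]
Next, for $x\in\ang a$ one has $-1/x-a=-1/x-\bra{-1/x}\in K$, so $S(\ang a)\subseteq a+K$ and moreover $\bra{w}=a$ for every $w\in S(\ang a)$; hypothesis (2) upgrades the inclusion to $S(\ang a)=\bigcup_{j\in J_a}(a+K_j)$ for some $J_a\subseteq\{1,\dots,N\}$, whence $G(\ang a)=T^{-a}S(\ang a)=\bigcup_{j\in J_a}K_j$. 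Since $G$ restricted to each cell is a Möbius map and hence sends null sets to null sets, applying $G$ to the decomposition of $K_i$ yields, up to measure zero,
\[ G(K_i)=\bigcup_{a\in\ExtAlpha_i}G(\ang a)=\bigcup_{a\in\ExtAlpha_i}\bigcup_{j\in J_a}K_j, \]
a union of elements of $\Pc$, so $G(K_i)$ is buildable from $\Pc$. As $i$ is arbitrary, the finite building property holds.

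The main obstacle — really the only non-formal point — is the measure-zero bookkeeping in the first step: passing from ``each cell is contained in some $K_i$'' to ``$K_i$ equals the union of the cells it contains, up to a null set'', and then verifying that this equality survives application of the piecewise-Möbius map $G$, so that the final conclusion is a genuine buildability statement in the sense of \Cref{defn buildable}. Both reductions rest only on the facts that the $K_i$ (and the cell boundaries) have null boundary and that Möbius transformations preserve null sets; neither is deep, but this is where the argument must be written carefully rather than symbolically.
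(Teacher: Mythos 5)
Your proposal is correct and follows essentially the same route as the paper's proof: use hypothesis (1) to decompose each $K_i$ into the cells $\ang a$ it contains, note that $G = T^{-a}S$ on $\ang a$, and then use hypothesis (2) to rewrite each $G(\ang a) = T^{-a}S(\ang a)$ as a union of partition elements, so that $G(K_i)$ is buildable from $\Pc$. Your explicit measure-zero bookkeeping (and the observation that $\bra{w}=a$ on $S(\ang a)$) is harmless extra care consistent with \Cref{defn buildable}, not a different argument.
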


\begin{proof}
	Unlike \Cref{pre tiling shortcut}, we don't need to state any conditions on $\bra{\cdot}$. We need only that $G(z) = T^{-a}S(z)$ for all $z \in \ang{a}$, which is true by \eqref{cell}.

	For each $1 \le i \le N$, let 
	\[ A(i) = \setform{ a \in \Z[\i] }{ \ang a \subset K_i } \]
	and let $J(a) \subset \{1,\ldots,N\}$ be such that $S(\ang a) = \bigcup_{j \in J(a)} (a+K_j)$.
	Then 
	\begin{align*}
    	G(K_i) 
    	&= G\!\left( \bigcup_{a \in A(i)} \ang a \right)
    	= \bigcup_{a \in A(i)} G(\ang a) 
    	= \bigcup_{a \in A(i)} T^{-a} S \ang a 
		= \bigcup_{a \in A(i)} \bigcup_{j \in J(a)} K_j
    \end{align*}
    and so $G(K_i)$ is buildable from $\Pc$.
\end{proof}

In general, each $\ang a$ might not be a subset of any $K_i$, and so we will have to look at multiple intersections $\ang a \cap K_i$. Some new notation will be helpful.

\begin{defn} \label{defn new notations}
	Fix a partition $\{K_1,\ldots,K_N\}$.
	Define
	\< \label{new alphabet} \ExtAlpha = \setform{ (a,i) }{ a \in \Z[\i], 1 \le i \le N, \ang a \cap K_i \ne \emptyset }\!. \>
	For any $a \in \Z[\i]$ and $1 \le i \le N$, denote
	\<
		K_{i,a} 
		= K_i \cap \ang a
		= \setform{ x \in K_i }{ \bra{S x} = a }\!.
	\>
	
	Lastly, for each $1 \le i \le N$ we define $\ExtAlpha_i \subset \ExtAlpha$ by  
    \< \label{ExtAlpha} \begin{split}
    	\ExtAlpha_i &= \setform{ (a,j) }{ K_i \subset G\big(K_{j,a}\big) }
    	\\ &= \setform{ (a,j) }{ K_i \subset T^{-a} S \big(K_{j,a}\big) }
    	\\ &= \setform{ (a,j) }{ S T^a(K_i) \subset K_{j,a} }\!.
    \end{split} \>
\end{defn}

Note that for any algorithm, a partition satisfying the finite building property will not be unique. In \Cref{sec examples} we present partitions $\Pc$ for several algorithms and prove that each satisfies the finite building property. The process described below was used to produce each of these partitions.

\begin{samepage}
\begin{prop}[Partiton creation] \label{partition construction}
	Fix a continued fraction algorithm, and let $\Pc_0 = \{K\}$. Iteratively repeat the following process: 
	\begin{itemize} \item if there exists $a \in \Z[\i]$ and $k \in \Pc_n$ such that the set
	\< \label{not buildable} X = \setform{ T^{-a}Sz }{ \bra{Sz} = a \text{ and } z \in k } \>
	is \emph{not} buildable from $\Pc_n$, then let 
	\[ \Pc_{n+1} = \Pc_n \vee \big\{ X, K \setminus X \big\} \]
	where $\mathcal A \vee \mathcal B = \setform{ A \cap B }{ A \in \mathcal A, B \in \mathcal B }$. 
	\end{itemize}
	If at some finite stage every set of the form \eqref{not buildable} is buildable from $\Pc_n$, then the continued fraction algorithm satisfies the finite building property.
\end{prop}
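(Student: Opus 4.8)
The plan is to verify directly that a terminal partition produced by the process witnesses \Cref{defn partition property}. Suppose the process reaches a stage $n$ at which every set of the form \eqref{not buildable} is buildable from $\Pc_n$, and write $\Pc_n = \{K_1,\dots,K_N\}$. Once a split has occurred one automatically has $N>1$, and since for each algorithm treated in \Cref{sec examples} the process genuinely refines $\{K\}$, we may assume $N>1$ (the degenerate case $\Pc_n=\{K\}$, if it ever arose, would need only a trivial separate remark). The goal is to show that each $G(K_i)$ is buildable from $\Pc_n$.

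\textbf{Step 1: identify \eqref{not buildable} with images of cells.} Fix $a\in\Z[\i]$ and $k=K_i$. By \eqref{cell} and the formula $G(z)=T^{-a}Sz$ valid for all $z$ with $\bra{Sz}=a$, the condition ``$\bra{Sz}=a$ and $z\in K_i$'' describes exactly the set $K_i\cap\ang a$, which in the notation of \Cref{defn new notations} is $K_{i,a}$, and the map applied to it is $G$. Hence the set $X$ in \eqref{not buildable} is precisely $T^{-a}S(K_{i,a})=G(K_{i,a})$. So the termination hypothesis says exactly: \emph{for every $a\in\Z[\i]$ and every $i$, the set $G(K_{i,a})$ is buildable from $\Pc_n$.}

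\textbf{Step 2: assemble $G(K_i)$ from the pieces $G(K_{i,a})$.} The cells $\{\ang a\}_{a\in\Z[\i]}$ are pairwise disjoint and cover $K$ except for the point $0$, so $K_i$ differs from $\bigcup_{a\in\Z[\i]}K_{i,a}$ by at most $\{0\}$, and therefore $G(K_i)$ differs from $\bigcup_{a}G(K_{i,a})$ by at most $\{0\}$; in particular they agree up to measure zero. For each $a$ write $G(K_{i,a})=U_a$ up to a null set $E_a$, where $U_a$ is a union of members of $\Pc_n$. Because $\Pc_n$ is \emph{finite}, $\bigcup_a U_a$ is again a union of members of $\Pc_n$, and
\[ \Bigl(\bigcup_a G(K_{i,a})\Bigr)\ \triangle\ \Bigl(\bigcup_a U_a\Bigr)\ \subseteq\ \bigcup_a\bigl(G(K_{i,a})\,\triangle\,U_a\bigr)\ \subseteq\ \bigcup_a E_a, \]
a countable union of null sets, hence null. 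Thus $G(K_i)$ equals a union of members of $\Pc_n$ up to measure zero, i.e.\ it is buildable from $\Pc_n$. As $i$ was arbitrary and $\Pc_n$ is a finite partition of $K$ with $N>1$, the algorithm satisfies the finite building property.

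\textbf{Main obstacle.} The only step that uses more than bookkeeping is the passage in Step~2 to the \emph{infinite} (countable) union over $\Z[\i]$: buildability survives it precisely because the reference collection $\Pc_n$ is finite, so ``union of members of $\Pc_n$'' takes only finitely many values while the infinitely many null errors $E_a$ still aggregate to a null set. Two routine technical points should also be recorded: one should take closures when forming $X$ and $K\setminus X$ so that every member of each $\Pc_m$ is a closed set with null boundary, which propagates inductively provided $\partial K$ and each $\partial\ang a$ are null (as is the case for the algorithms of \Cref{sec examples}); and the fact that each $G(K_{i,a})$ is a bona fide subset of $K$ of the expected type uses that $G$ is piecewise a Möbius transformation, hence carries null sets to null sets.
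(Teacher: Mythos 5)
Your proposal is correct and follows essentially the same route as the paper, whose entire proof is the observation that $G(K_i) = \bigcup_{a \in \Z[\i]} \setform{T^{-a}Sz}{\bra{Sz}=a,\ z \in K_i}$, i.e.\ a union of sets of the form \eqref{not buildable}, each buildable at the terminal stage. Your Steps 1--2 are this same decomposition (identifying each piece as $G(K_{i,a})$), just with the routine measure-zero and countable-union bookkeeping made explicit.
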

\end{samepage}

This follows immediately from
\[ G(K_i) = \bigcup_{a \in \Z[\ssi]} \setform{ T^{-a}Sz }{ \bra{Sz} = a \text{ and } z \in K_i }\!, \]
meaning that $G(K_i)$ is a union of sets of the form \eqref{not buildable} (so if all such sets are buildable by $\Pc_n$ then \Cref{defn partition property} is satisfied).

\begin{figure}[hbt]
	\includegraphics{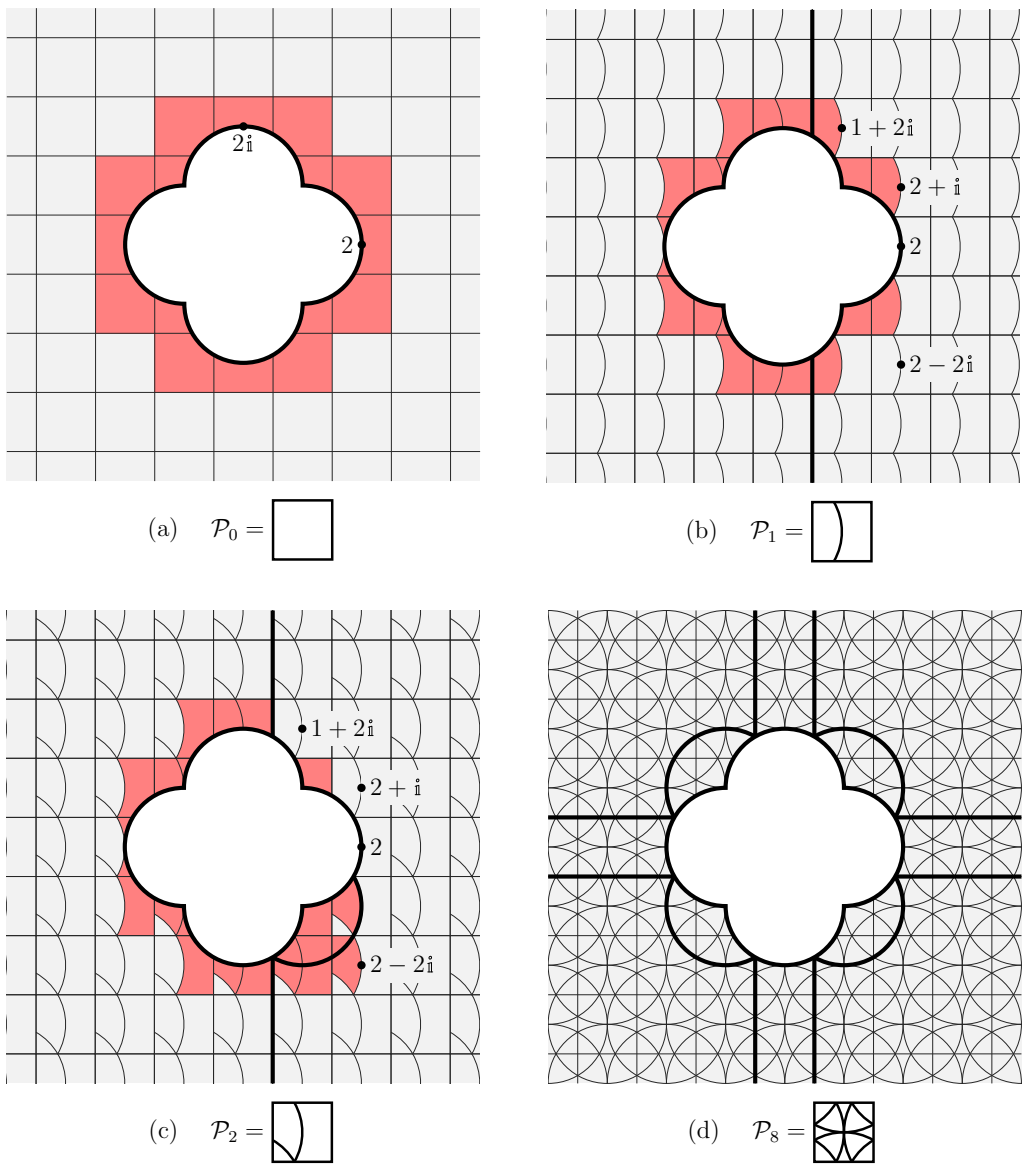}
    \caption{Stages in the process of constructing $\Pc$ for the nearest integer algorithm}
    \label{fig process}
\end{figure}

\Cref{fig process} shows an application of this process for the nearest integer algorithm. In the top-left (``Stage $0$''), the red portions are sets $a+X$ for which $X$ is not buildable from $\Pc_0 = \{K\}$. For example, with $a = 2$ and $k = K$ we have 
\begin{center}
	\begin{tikzpicture}
		\draw (-1,0) node [left] {$X=\setform{z\in K}{\abs{z+1}\ge 1}$};
		
		\draw [fill=black!33] (-0.133974596,1/2) arc (30:-30:1) -- (1/2,-1/2) -- (1/2,1/2) -- cycle;
		\draw [dashed] (-0.133974596,1/2) -- (-1/2,1/2) -- (-1/2,-1/2) -- (-0.133974596,-1/2);
	\end{tikzpicture}
\end{center}
Using this to create $\Pc_1$ gives \Cref{fig process}(b) (``Stage $1$''), which shows thin lines for every integer translation of $\Pc_1$.\footnote{\,Using a different value of $a$ would give a different $\Pc_1$, but in the end some $\Pc_n$ would still be exactly the partition from \eqref{Hurwitz partition}.} The area around $2 \in \C$ is now good, but using $a = 2 + \i$ and $k$ the complement of the previous $X$ gives 
\begin{center}
	\begin{tikzpicture}
		\draw (-1,0) node [left] {$X=\setform{z\in K}{\abs{z+1}\le 1, \abs{z+1+\i} \ge 1}$};
		
		\draw [fill=black!33] (-0.133974596,1/2) arc (30:-30:1) arc (30:60:1) -- (-1/2,1/2) -- cycle;
		\draw [dashed] (-0.133974596,1/2) -- (1/2,1/2) -- (1/2,-1/2) -- (-1/2,-1/2) -- (-1/2,-0.133974596);
	\end{tikzpicture}
\end{center}
which is still red in Stage $1$. This set is used to create $\Pc_2$ in \Cref{fig process}(c). Now all pieces bordering $2$, $2 + \i$, and $1+2\i$ are gray, but note that around $1-2\i$ and $2-2\i$ there are pieces that had been gray in Stages $0$ and $1$ but are now red in Stage $2$. This is because of how the arc of $B(-1-\i,1)$, which is used to form boundaries in $\Pc_2$, intersects $\ang{1-2\i}$ and $\ang{2-2\i}$ (it also intersects $\ang{2-\i}$, but this does not create any unbuildable pieces).

Fortunately, after eight steps the process of \Cref{partition construction} does yield a partition $\Pc_8 = \{K_1,\ldots,K_{12}\}$ such that $G(k)$ is buildable from $\Pc_8$ for every $k \in \Pc_8$. This is precisely the partition given in \eqref{Hurwitz partition} in \Cref{ex Hurwitz}.

\FloatBarrier
\subsection{Finite product structure}

\begin{thm} \label{fps iff system A}
	Consider an algorithm that satisfies the finite building property with partition $\{K_1,\ldots,K_N\}$, and let $L_1, \ldots L_N \subset \C$ be arbitrary closed sets such that the boundaries of each $K_i \times L_i$ have zero $2$-dimension Lebesgue measure. The map ${\hat G}$ is bijective a.e.~on the set
	\< \label{omega A} \Omega := \bigcup_{i=1}^N K_i \times L_i \>
	if and only if the following system holds:
	\< \label{system A} L_i = \bigcup_{(a,j) \in \ExtAlpha_i} T^{-a} S L_j, \qquad 1 \le i \le N. \>
\end{thm}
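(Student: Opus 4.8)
The plan is to unwind the definition of $\hat G$ in the $(z,w)$ coordinates and track where a point $(z,w)\in K_j\times L_j$ goes. Since the $K_j$ form a partition of $K$ and $G$ acts on $K$, the first coordinate of $\hat G(z,w)$ lies in $K$, hence in some $K_i$; which $i$ occurs (and which branch $a=\bra{Sz}$ is applied) is governed precisely by the pieces $K_{j,a}=K_j\cap\ang a$ and the index set $\ExtAlpha$. So I would first record the decomposition
\[ \hat G\big(K_j\times L_j\big) = \bigcup_{a:\,(a,j)\in\ExtAlpha} T^{-a}S\big(K_{j,a}\big)\times T^{-a}S(L_j), \]
using \eqref{natural extension}. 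Then, summing over $j$, the image $\hat G(\Omega)$ is the union over all $(a,j)\in\ExtAlpha$ of $T^{-a}S(K_{j,a})\times T^{-a}S(L_j)$. By the finite building property, each $T^{-a}S(K_{j,a})$ is (up to measure zero) a union of whole partition pieces $K_i$; in fact $K_i\subset T^{-a}S(K_{j,a})$ is exactly the condition $(a,j)\in\ExtAlpha_i$ from \eqref{ExtAlpha}. Regrouping the union by which $K_i$ appears in the first coordinate, I get, up to measure zero,
\[ \hat G(\Omega) = \bigcup_{i=1}^N K_i \times \Big(\bigcup_{(a,j)\in\ExtAlpha_i} T^{-a}S(L_j)\Big). \]

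With this identity in hand, the theorem becomes an exercise in comparing $\Omega=\bigcup K_i\times L_i$ with $\hat G(\Omega)=\bigcup K_i\times\big(\bigcup_{(a,j)\in\ExtAlpha_i}T^{-a}SL_j\big)$. For the ``if'' direction: assuming \eqref{system A}, the second coordinate over each $K_i$ matches, so $\hat G(\Omega)=\Omega$ up to measure zero; then I need bijectivity a.e. rather than just surjectivity. For this I would use that $\hat G$ is a.e. locally a composition of the invertible Möbius maps $S$ and $T^{-a}$ (each $\hat G|_{\ang a\times\C}$ is injective since $z\mapsto T^{-a}Sz$ is injective), so $\hat G$ restricted to the interior of each $K_{j,a}\times L_j$ is injective; the only way bijectivity a.e. on $\Omega$ could fail is if two such image pieces overlap on a set of positive measure, i.e. if $\sum_{(a,j)} \mathrm{meas}\big(T^{-a}S(K_{j,a})\times T^{-a}SL_j\big) > \mathrm{meas}(\Omega)$. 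Because each $z\in K\setminus\Qb[\i]$ has a unique digit $a=\bra{Sz}$ and hence lies in a unique $\ang a$, the first-coordinate pieces $\{K_{j,a}\}$ tile $K$ (up to measure zero) as $j,a$ vary, and $S$, $T^{-a}$ preserve the relevant null sets; combined with \eqref{system A} this forces the total measure to be preserved, giving injectivity a.e. I should also invoke \Cref{restrict to disk} (or simply that $\Omega$ has finite measure, which I can assume or arrange) to make the measure-counting argument legitimate.

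For the ``only if'' direction: suppose $\hat G$ is bijective a.e. on $\Omega$. Bijectivity a.e. in particular implies $\hat G(\Omega)=\Omega$ up to measure zero (a measure-preserving a.e.-injective self-map of a finite-measure set, or an a.e. bijection, is a.e. onto). Using the computed form of $\hat G(\Omega)$, we get $\bigcup_i K_i\times L_i = \bigcup_i K_i\times M_i$ up to measure zero, where $M_i:=\bigcup_{(a,j)\in\ExtAlpha_i}T^{-a}SL_j$. To conclude $L_i=M_i$ for each individual $i$ I need the $K_i$ to be ``measure-theoretically independent'' enough: since the $K_i$ have pairwise disjoint interiors and each has positive measure, for a.e. $z$ in the interior of $K_i$ the vertical slice of $\Omega$ at $z$ is exactly $L_i$ and the slice of $\hat G(\Omega)$ at $z$ is exactly $M_i$ (the other pieces $K_{i'}\times L_{i'}$ contribute only on the null set $K_i\cap K_{i'}$), so $L_i=M_i$ up to measure zero, which is \eqref{system A}.

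The main obstacle I expect is the bijectivity (as opposed to mere surjectivity) bookkeeping in the ``if'' direction: one must rule out positive-measure overlaps among the image rectangles $T^{-a}S(K_{j,a})\times T^{-a}SL_j$, and this requires carefully exploiting that the first coordinates $\{K_{j,a}\}_{j,a}$ partition $K$ up to measure zero together with the equality (not just containment) forced by \eqref{system A} — containment alone would only give an invariant set, not bijectivity. A secondary technical point is handling all the ``up to measure zero'' qualifiers uniformly: I would fix at the outset that all $\partial K_i$, $\partial L_i$, $\partial\ang a$, and their images under the countably many maps $T^{-a}S$ are null, so that every set-equality below is an honest a.e. statement, and note that only finitely many $(a,j)\in\ExtAlpha$ occur because $\Omega$ is bounded (the $L_i\subset S(K)$ or similar) — or, if the paper does not guarantee finiteness, restrict to the relevant bounded region via \Cref{restrict to disk}.
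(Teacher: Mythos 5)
Your proposal is correct and follows essentially the same route as the paper: your displayed image decomposition is exactly \Cref{fps lemma A}, and both implications are then obtained, as in the paper, by comparing the fibers over each $K_i$ (system \eqref{system A} giving $\hat G(\Omega)=\Omega$, and conversely). The only difference is cosmetic: where you attempt a measure-counting justification of a.e.\ injectivity (delicate as stated, since $T^{-a}S$ does not preserve Lebesgue measure), the paper simply notes that the constituent unions are disjoint up to null boundaries and that each branch $T^{-a}S$ is invertible.
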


\begin{lem} \label{fps lemma A}
	For arbitrary sets $L_1,\ldots,L_N \subset \C$, we have that
	\[ {\hat G}\!\left(\, \bigcup_{i=1}^N K_i \times L_i \right) = \bigcup_{i=1}^N \left( K_i \times \left( \bigcup_{(a,j) \in \ExtAlpha_i} T^{-a} S L_j \right) \right). \]
\end{lem}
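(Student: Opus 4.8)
The plan is to compute $\hat G$ on the union directly and then re-index the result. First I would write the image as a union of images of the individual "cells" $K_{i,a} = K_i \cap \ang a$. Since the partition $\{K_1,\ldots,K_N\}$ refines nothing in particular but each $K_i$ decomposes (up to measure zero) as $K_i = \bigcup_{a : (a,i)\in\ExtAlpha} K_{i,a}$, I would start from
\[
	\hat G\!\left( \bigcup_{i=1}^N K_i \times L_i \right)
	= \bigcup_{i=1}^N \bigcup_{a : (a,i) \in \ExtAlpha} \hat G\big( K_{i,a} \times L_i \big),
\]
noting that on $K_{i,a}$ the map $\hat G$ acts by the single Möbius pair $(z,w) \mapsto (T^{-a}Sz, T^{-a}Sw)$ because $\bra{Sz} = a$ there. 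Hence $\hat G(K_{i,a} \times L_i) = T^{-a}S(K_{i,a}) \times T^{-a}S(L_i)$.

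Next I would re-index by the \emph{target} block rather than the source block. The key algebraic observation, already packaged in \eqref{ExtAlpha}, is that $(a,j) \in \ExtAlpha_i$ exactly when $K_i \subset T^{-a}S(K_{j,a})$, equivalently $ST^a(K_i) \subset K_{j,a}$. So for a fixed target index $i$, the blocks $K_{i,?}$ that end up covering $K_i$ in the $z$-coordinate are precisely those $K_{j,a}$ with $(a,j) \in \ExtAlpha_i$, and over each such block the $w$-fibre contributes $T^{-a}S L_j$. Collecting: the piece of $\hat G(\Omega)$ lying over $K_i$ in the first coordinate is $K_i \times \bigcup_{(a,j)\in\ExtAlpha_i} T^{-a}SL_j$, which is the claimed formula. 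I would be a little careful to argue that the first-coordinate projections genuinely partition into the $K_i$'s up to measure zero — that is, that $\bigcup_{i,a} T^{-a}S(K_{i,a})$ reassembles into $\bigcup_i K_i$ with the fibre over (a.e. point of) $K_i$ being exactly the union indexed by $\ExtAlpha_i$ — which is where the finite building property (so that each $T^{-a}S(K_{i,a}) = G(K_{i,a})$ is buildable from $\Pc$) and the zero-measure-boundary hypotheses get used.

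The main obstacle I expect is bookkeeping rather than depth: one must make sure that when a point $z \in K_i$ has several preimage cells $K_{j,a}$ mapping onto it, \emph{every} such $(a,j)$ is counted (so the union over $\ExtAlpha_i$ is complete) and \emph{no spurious} $(a,j)$ is counted (so that $K_{j,a}$ with only partial overlap with $K_i$, i.e. where $K_i \not\subset T^{-a}S(K_{j,a})$, are excluded from $\ExtAlpha_i$). The three equivalent descriptions in \eqref{ExtAlpha} make this a formal matter: $K_i \subset G(K_{j,a})$ is the exact condition for the fibre $T^{-a}SL_j$ to appear over all of $K_i$, and since the finite building property guarantees $G(K_{j,a})$ is a union of full partition elements, "$K_i \subset G(K_{j,a})$" and "$K_i \cap \mathrm{int}\, G(K_{j,a}) \ne \emptyset$" agree up to measure zero. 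With that in hand the computation collapses to the displayed identity.

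Finally, I would remark that this lemma is purely a statement about the set-image of $\hat G$ — injectivity plays no role here — so no invertibility or Jacobian considerations enter; those are deferred to the proof of \Cref{fps iff system A}, where one compares $\hat G(\Omega)$ with $\Omega$ and invokes measure-preservation of $\hat G$ to upgrade "$\hat G(\Omega) = \Omega$ up to measure zero" to "a.e.\ bijective."
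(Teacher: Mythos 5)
Your argument is essentially the paper's own proof: decompose $\Omega$ into the cells $K_{j,a}\times L_j$ indexed by $\ExtAlpha$, apply $\hat G$ cellwise as $(T^{-a}SK_{j,a})\times(T^{-a}SL_j)$, and then re-index by the target block using the characterization $(a,j)\in\ExtAlpha_i \iff K_i\subset T^{-a}S(K_{j,a})$ from \eqref{ExtAlpha}, with the buildability of each $G(K_{j,a})$ justifying the reassembly of the first coordinates into full partition elements up to measure zero. Only your closing aside is slightly off: in \Cref{fps iff system A} the paper upgrades $\hat G(\Omega)=\Omega$ to a.e.\ bijectivity by noting each $T^{-a}S$ is bijective and the unions overlap only on zero-measure boundaries, not by invoking measure-preservation, but that lies outside the present lemma.
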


\begin{proof}
Using $K_{i,a}$ and $\ExtAlpha$ from \Cref{defn new notations}, we can decompose $\Omega$ as
\begin{align*}
    \bigcup_{i=1}^N K_i \times L_i
    &= \bigcup_{i=1}^N \left( \left( \bigcup_{\substack{a \in \Z[\ssi] \\ K_{i,a} \ne \emptyset}} K_{i,a} \right) \times L_i \right) \\
    &= \bigcup_{i=1}^N \bigcup_{\substack{a \in \Z[\ssi] \\ K_{i,a} \ne \emptyset}} K_{i,a} \times L_i \\
    &= \bigcup_{(a,i) \in \ExtAlpha} K_{i,a} \times L_i.
\end{align*}
Then we look at the image of this union under ${\hat G}$.
\begin{align*}
	{\hat G}\!\left(\, \bigcup_{i=1}^N K_i \times L_i \right) 
	&= {\hat G}\!\left( \bigcup_{(a,j) \in \ExtAlpha} K_{j,a} \times  L_j \right) \\
	&= \bigcup_{(a,j) \in \ExtAlpha} \left( T^{-a} S K_{j,a} \times  T^{-a} S L_j \right) \\[-0.5em]
	&= \bigcup_{(a,j) \in \ExtAlpha} \left( \left(\bigcup_{\substack{1 \le i \le N \\ (a,j) \in \ExtAlpha_i}} K_i \right) \times  T^{-a} S L_j \right) \\[0.2em]
	&= \bigcup_{(a,j) \in \ExtAlpha} \; \bigcup_{\substack{1 \le i \le N \\ (a,j) \in \ExtAlpha_i}} \left( K_i \times T^{-a} S L_j \right) \\
	&= \bigcup_{i=1}^N \; \bigcup_{(a,j) \in \ExtAlpha_i} \left( K_i \times T^{-a} S L_j \right) \\
	&= \bigcup_{i=1}^N \left( K_i \times \left( \bigcup_{(a,j) \in \ExtAlpha_i} T^{-a} S L_j \right) \right) 
\end{align*}
where we recall that $(a,j) \in \ExtAlpha_i$ means that $K_i \subset G K_{j,a}$.
\end{proof}

\begin{proof}[Proof of \Cref{fps iff system A}]
Assume \eqref{system A} holds. Then \Cref{fps lemma A} immediately gives that ${\hat G}(\Omega) = \Omega$. 
A function is always surjective onto its image. Because all of the unions here are disjoint except on boundaries (which by assumption have measure zero), and because each transformation $T^{-a} S$ is bijective, ${\hat G}$ is injective except on a set of zero measure. Thus ${\hat G}$ is bijective a.e.~on $\Omega$.

\medskip
Now assume that ${\hat G}$ is bijective almost everywhere on $\Omega$, so ${\hat G}(\Omega)$ must equal $\Omega$ (both are closed). Then \Cref{fps lemma A} gives that
\[
	{\hat G}(\Omega) 
	= \bigcup_{i=1}^N \left( K_i \times \left( \bigcup_{(a,j) \in \ExtAlpha_i}  T^{-a} S L_j \right) \right)
\]
and in order for this to equal $\bigcup_{i=1}^N K_i \times L_i$ it must be that 
\[ \bigcup_{(a,j) \in \ExtAlpha_i} T^{-a} S L_j = L_i \]
for each $1\le i\le N$. This is exactly the system \eqref{system A}.
\end{proof}

The question remains what kind of sets $L_i$ could satisfy \eqref{system A}. Some examples of $L_1,\ldots,L_N$ for specific algorithms are given in \Cref{sec examples}, but in general it is not easy to construct $\{L_i\}$ given $\{K_i\}$.

\Cref{fps iff system A} concerns bijectivity domains of $\hat G$. Ideally, we would like for $\bigcup K_i \times L_i$ to also be an attractor for $\hat G$. The following \namecref{attractor} gives a sufficient, but not necessary, condition for this.

\begin{thm} \label{attractor} 
Assume that for all $z \in K \setminus \Qb[\i]$ the norms $\abs{a_n}$ of continued fraction digits $a_n$ are unbounded.

Let $\Omega = \bigcup_{i=1}^N K_i \times L_i$ be a bijectivity domain for $\hat G$. If each $S(L_i)$ is bounded, then for every $(z,w) \in \C \times \C$ with $z$ irrational there exists $n < \infty$ such that ${\hat G}^n(z,w) \in \Omega$.
\end{thm}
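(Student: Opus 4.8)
The plan is to reduce everything to the one-complex-coordinate dynamics by passing to coordinates $(u,v)=(Sw,Sz)$, exactly as in the proof of \Cref{restrict to disk}, and then iterate that result. First I would use \Cref{restrict to disk} to find $n_0$ with ${\hat G}^{n_0}(z,w)\in K\times(\C\setminus\bar\D)$; write $(z_0,w_0)={\hat G}^{n_0}(z,w)$, so $z_0\in K$ is still irrational and $Sw_0\in\bar\D$. From this point on the $z$-coordinate stays in $K$ forever, and the $w$-coordinate evolves by $w_{k+1}=T^{-a_k}Sw_k$ with $a_k=\bra{Sz_k}$ the usual digit sequence of $z_0$. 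The goal is to show the pair eventually lands in $\Omega$, and since $\Omega$ already contains all of $K$ in its first coordinate, what must be arranged is that $Sw_k$ lands inside $S(L_i)$ for whichever $i$ has $z_k\in K_i$ — equivalently that $w_k$ lands in the correct $L_i$.

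The key mechanism is the hypothesis that the digit norms $\abs{a_n}$ are unbounded together with the hypothesis that each $S(L_i)$ is bounded, say contained in $B(0,\rho)$ for a common $\rho$. I would track $v_k=Sw_k$ under the map $\hat R$, where $v_{k+1}=ST^{-a_k}v_k = -1/(v_k-a_k)$. If at step $k$ the digit satisfies $\abs{a_k}$ large — specifically $\abs{a_k}>\rho+1$ — then since $\abs{v_k}$ is already bounded (once we are past $n_0$, $\abs{v_k}\le 1$ and subsequently $\abs{v_{k+1}}=1/\abs{v_k-a_k}\le 1/(\abs{a_k}-1)$), we get $\abs{v_{k+1}}$ as small as we like, in particular $\abs{v_{k+1}}<$ the inner radius of every $S(L_i)$ region... but that is not quite what is needed, since the $L_i$ need not contain neighborhoods of any fixed point. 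The cleaner route: \Cref{fps iff system A} gives $\Omega={\hat G}(\Omega)$, hence $\Omega=\bigcap_{m\ge 0}{\hat G}^m(\text{everything reachable})$ in the relevant sense; more usefully, I would argue that the complement is transient. Let $\Omega'=\bigcup_i K_i\times(\C\setminus L_i)$ restricted to $K\times(\C\setminus\bar\D)$-type points. Using the system \eqref{system A}, a point $(z_k,w_k)$ with $z_k\in K_i$ fails to be in $\Omega$ exactly when $w_k\notin L_i$, and applying ${\hat G}$ one checks via \eqref{system A} that membership in $\Omega$ is forward-invariant and that, conversely, if $(z_k,w_k)\notin\Omega$ for all $k$ then the backward images $S w_k$ would have to avoid a fixed bounded set infinitely often while the contraction forced by a large digit $a_k$ pushes $Sw_{k}$ into $B(0,\varepsilon)$; choosing the large digit large enough (possible since $\abs{a_n}$ unbounded) forces $Sw_{k+1}$ into the interior of the region dictated by \eqref{system A}, giving $w_{k+1}\in L_{i(k+1)}$, a contradiction.

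The main obstacle, and the step I would spend the most care on, is this last one: converting ``the digit norms are unbounded and $S(L_i)$ is bounded'' into ``$Sw_k$ is eventually trapped,'' because the sets $L_i$ are given only implicitly by the fixed-point system \eqref{system A} and need not contain any common disk. I expect one has to use \eqref{system A} in the contrapositive direction — that $S(L_i) \supseteq$ (up to translation/inversion) large round pieces coming from the large-digit cells $\ang{a}$ with $\abs a$ big — to show that once $Sw_{k}$ is small enough it automatically lies in the correct $S(L_{i})$. Concretely, for $\abs{a}$ large the cell $\ang{a}$ is essentially a full disk-like neighborhood around $0$ scaled by $1/\abs a$, so $T^{-a}S\ang a$ covers a neighborhood of $a$'s worth of $K$, and the corresponding term $T^{-a}SL_j$ in \eqref{system A} forces $L_i$ to contain everything that maps there; unwinding this shows $B(0,\delta)\subset S(L_i)$ for every $i$ and some uniform $\delta>0$ — or at least $B(0,\delta)\cap(\C\setminus\bar\D)$, which combined with \Cref{restrict to disk} is enough. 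Once that uniform-neighborhood fact is in hand, the proof closes immediately: pick $k\ge n_0$ with $\abs{a_k}>1/\delta+1$, conclude $\abs{Sw_{k+1}}<\delta$, hence $Sw_{k+1}\in S(L_{i})$ where $z_{k+1}\in K_i$, hence ${\hat G}^{k+1}(z,w)\in\Omega$, and set $n=k+1$.
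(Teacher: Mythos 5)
Your mechanism is the same one the paper uses: reduce to $K\times S(\D)$ via \Cref{restrict to disk}, then wait for a digit of large modulus, which sends the second coordinate far from the origin (equivalently sends its image under $S$ close to $0$). The gap is in the final step, and you flag it yourself: to conclude ${\hat G}^{k+1}(z,w)\in\Omega$ you need each $L_i$ to contain a full neighborhood of infinity, i.e.\ $B(0,\delta)\subset S(L_i)$ for a uniform $\delta>0$, and you only say you ``expect'' this to follow by unwinding \eqref{system A}. That derivation is never carried out, and it cannot be extracted from \eqref{system A} together with boundedness of the $S(L_i)$ alone: \eqref{system A} is a fixed-point system that also admits degenerate solutions (for instance all $L_i=\emptyset$, or the trace of a genuine solution on an invariant slice), for which $\hat G$ is still bijective a.e.\ on $\bigcup K_i\times L_i$ and each $S(L_i)$ is bounded, yet no $S(L_i)$ contains a ball about $0$ and the conclusion fails. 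So the trapping claim your proof hinges on is left unproven. The paper does not attempt any such structural derivation: it fixes $M$ with $S(L_i)\subset B(0,\tfrac1M)$ for all $i$, uses the unboundedness hypothesis to arrange a digit with $\abs{a}\ge M+2$ while the second coordinate is in $S(\D)$, notes $\abs{T^{-a}Sw}>M$, and then passes directly to $K_j\times\big(\C\setminus B(0,M)\big)\subset K_j\times L_j\subset\Omega$ --- that is, it takes the implication ``modulus $>M$ implies membership in $L_j$'' straight from the hypothesis on the $L_j$, which is exactly the implication you were trying (and failed) to establish from \eqref{system A}. Your instinct that this is the delicate point is not unreasonable, but a complete argument must either invoke that containment as the paper does or actually prove it; your sketch does neither, and the intermediate paragraph about transience of the complement and $\Omega=\bigcap_m\hat G^m(\cdot)$ does not substitute for it.

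Two smaller points. First, your parenthetical claim that $\abs{Sw_k}\le 1$ persists for all $k\ge n_0$ is not justified: a digit of modulus $1$ or $\sqrt2$ (possible for the nearest odd and nearest even algorithms) can make $\abs{Sw_k-a_k}<1$ and eject $Sw_{k+1}$ from $\D$; you need the large digit to occur at a moment when $Sw_k\in\D$, which can be arranged by re-applying \Cref{restrict to disk} (the paper is terse about this as well, but your write-up asserts the persistence as if automatic). Second, the appeal to \Cref{fps iff system A} is fine for obtaining \eqref{system A}, but the contradiction scaffolding built around it adds nothing once the neighborhood-of-infinity property of the $L_i$ is available, so the proof should be reorganized around that single missing claim.
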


% (x,y) = (z,Sw)
\begin{proof}
	\Cref{restrict to disk} shows there exists $m < \infty$ such that ${\hat G}^m(z,w) \in K \times S(\D)$. Thus we can assume $(z,w) \in K \times S(\D)$.
	
	Suppose $(z,w) \notin \Omega$.
	Let $M \in \N$ be such that each $S(L_i)$ is contained in the ball $B(0,\tfrac1M)$ of radius $\tfrac1M$ centered at the origin in the complex plane. Equivalently, $L_i \subset \C \setminus B(0,M)$ for all $1 \le i \le N$.
	Because $\abs{a_k}$ is unbounded, we can assume $a_1 = \bra{-1/z}$ has absolute value at least $M+2$ (replacing if necessary $z$ by some iterate $G^k z$). Because $w \in S(\D)$, we have $Sw \in \D$ and
	\[ \abs{ T^{-a_1}Sw } > \abs{ T^{-(M+1)}Sw } > M. \]
	This means that
	\[ \hat G(z,w) = (T^{-a_1}Sz, T^{-a_1}Sw) \]
	will be inside $K_j \times \big(\C \setminus B(0,M)\big)$ for some $1 \le j \le N$, and this product is contained in $K_j \times L_j \subset \Omega$ because each $L_j \subset \C \setminus B(0,M)$.
\end{proof}

\subsection{Practical determination of non-leading coordinates} \label{experimental}

For real $(a,b)$-con\-tin\-ued fractions, the analogue of \eqref{system A} is an overdetermined system of equations in $\R$. For specific algorithms one can solve this system exactly and get explicit descriptions of ``$\Lambda_{a,b}$,'' which is analogous to $\Omega$ here.

Using the system \eqref{system A} to ``solve'' for the sets $L_1,\ldots,L_N \subset \C$ given $K_1,\ldots,K_N$ is not practical. For the algorithms discussed in \Cref{sec examples}, sets $K_i$ and $L_i$ are described and then \eqref{system A} is verified to be correct, but a natural question is how these sets were determined in the first place. 
\Cref{partition construction} describes the construction of $\Pc = \{K_1,\ldots,K_N\}$. This process is carried out by hand. Once the $K_i$ are known, the process of finding the corresponding $L_i$ involves computational assistance; this method is most easily described by an example in the real setting.

\medskip
\providecommand\Frac[2]{\tfrac{#1}{#2}}
\providecommand\nFrac[2]{\Frac{-#1}{#2}}

For any $a \le 0 \le b$ satisfying $b-a \ge 1$ and $-ab \le 1$, we define as in \cite{KU10} the maps
\begin{align*}
	\bra{x}_{a,b} &:= \left\{\begin{array}{ll} \floor{x-a} &\text{if } x < a \\ 0 &\text{if } a \le x < b \\ \ceil{x-b} &\text{if } x \ge b \end{array}\right.
	\\[0.5em]
	G_{a,b}(x) &:= \frac{-1}x - \bra{\frac{-1}x}_{a,b}
	\\[0.5em]
	\hat G_{a,b}(x,w) &:= \left( \frac{-1}x - n, \frac{-1}w - n \right), \quad n = \bra{\frac{-1}x}_{a,b},
\end{align*}
and denote by $\Omega_{a,b} \subset \R^2$ the attractor of $\hat G_{a,b}$.\footnote{\,In \cite{KU10}, $G_{a,b}$ is denoted $\hat f_{a,b}$, and the set $\Omega_{a,b}$ is $\setform{(x,-1/y)}{(x,y) \in \smash{\hat\Lambda_{a,b}}}$.}

\medskip
Let $a = -\frac45, b = \frac25$. Then the intervals
\begin{align*}
	K_1 &= [\nFrac45, \nFrac35] &
	K_2 &= [\nFrac35, \nFrac12] &
	K_3 &= [\nFrac12, \nFrac13] \\
	K_4 &= [\nFrac13, \Frac15] &
	K_5 &= [\Frac15, \Frac14] &
	K_6 &= [\Frac14, \Frac25]
\end{align*}
form a partition $\Pc$ of $K = [-\tfrac45,\tfrac25]$ for which every $G_{-4/5,2/5}(K_i)$ is buildable from $\Pc$. There exist intervals $L_1,\ldots,L_6 \subset \R$ such that
\[ \Omega_{-4/5,2/5} = \bigcup_{i=1}^6 K_i \times L_i, \]
and the question is how to find these $L_i$. Instead of using Katok--Ugarcovici's overdetermined system, \Cref{fig nonleading} shows an experimental approach to this problem for $i=4$.

\begin{figure}[ht]
    \includegraphics{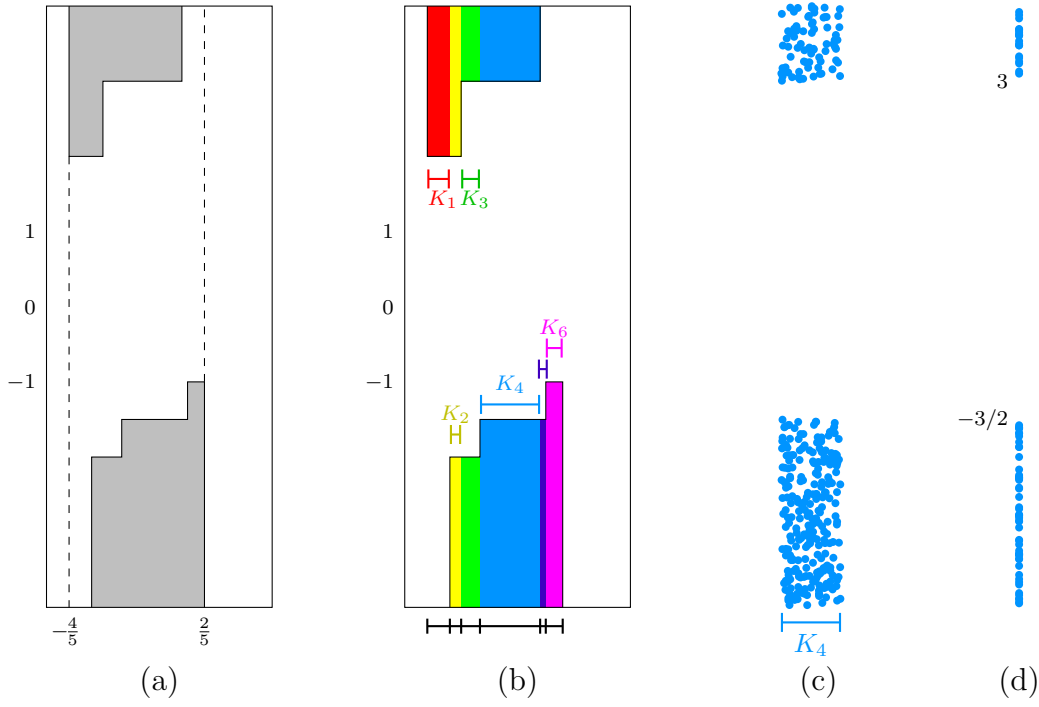}
    \caption{(a) $\Omega_{-4/5,2/5}$. (b) Decomposition into $\bigcup K_i \times L_i$. (c) Numerical plot of points in $\Omega_{-4/5,2/5} \cap (K_4 \times \R)$. (d) Projection of scatter plot onto $y$-axis}
    \label{fig nonleading}
\end{figure}

After a computer iterates random points under $\hat G_{-4/5,2/5}$, it can plot an approximation of 
\begin{align*}
    \mathrm{proj}_2\big( \Omega_{-4/5,2/5} \cap (K_4 \times \R) \big)
    &= \setform{ y }{ \exists~ x \in K_4 \text{ s.t. } (x,y) \in \Omega_{-4/5,2/5} },
\end{align*}
where $\mathrm{proj}_2(x,y) = y$. Such a plot is shown vertically in \Cref{fig nonleading}(d). Visual inspection shows that this scatter plot appears to be $\bar\R \setminus (\nFrac32,3) = \text{``}[3,\nFrac32]\text{''} \subset \R P^1$, so this is our candidate for $L_4$.
Similar observations provide 
\begin{align*}
	L_1 &= [2, \infty] &
	L_2 &= [2, -2] &
	L_3 &= [3, -2] \\
	L_4 &= [3, \nFrac32] &
	L_5 &= [\infty, \nFrac32] &
	L_6 &= [\infty, -1],
\end{align*}
and then one can verify that 
\[ \hat G_{-4/5,2/5} \left( \bigcup_{i=1}^6 K_i \times L_i \right) = \bigcup_{i=1}^6 K_i \times L_i \]
is indeed true for these sets.
 In practice it is easier to plot approximations of $S(L_i)$ because these are bounded in $\R$, e.g., $S(L_4) = [\nFrac13,\Frac23]$.

\medskip
In the complex setting, the process works almost identically. A computer can iterate random points in $\C \times \C$ under $\hat G$ for a given complex continued fraction algorithm and then generate scatter plots approximating a set
\begin{align*}
    \mathrm{proj}_2\big( \Omega \cap (K_i \times \C) \big)
    &= \setform{ w }{ \exists~ z \in K_i \text{ s.t. } (z,w) \in \Omega }
\end{align*}
or its image under $S$. \Cref{fig nonleading complex} shows an approximation (left) of $S L_1$ for the nearest even algorithm---the computer is given the function $\bra{z}$ from \eqref{NE choice} and the sets $K_1,\ldots,K_8$ from \eqref{NE partition}---along with the actual set $SL_1$ (right of \Cref{fig nonleading complex}) as described in the first line of \eqref{NE SL} and shown in \Cref{fig NE products}.

\begin{figure}[ht]
    \includegraphics{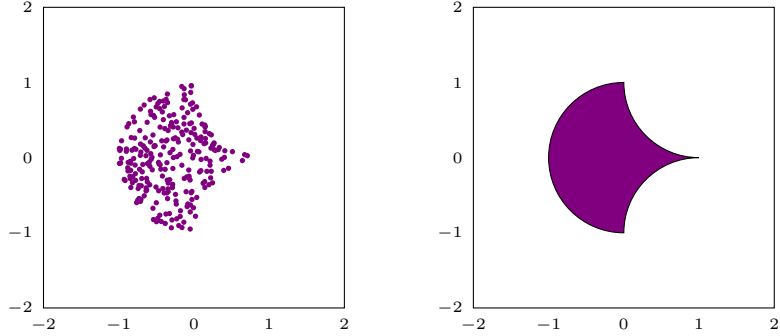}
    \caption{Determining $S(L_1)$ for the nearest even algorithm by approximation}
    \label{fig nonleading complex}
\end{figure}

For the nearest integer algorithm, only experimental scatter plots of $L_i$ are~known; this is how \Cref{fig Hurwitz products} was created, with more detail coming from more iterations.

\FloatBarrier 

\section{Explicit partitions for specific complex algorithms} \label{sec examples}
\newcommand\MyCaption[1]{\caption{Left:~$K_{i,a}$ for the #1 algorithm, colored by~$i$. Right:~image under $S$}}

Here we give explicit descriptions of $K_1,\ldots,K_N$ for various algorithms and prove that each algorithm satisfies the finite building property. Often these proofs make use of \Cref{pre tiling shortcut}, \Cref{tiling shortcut}, or \Cref{Markov shortcut} (see \Cref{tab shortcuts}).

\begin{table}[htb]
	\begin{tabular}{|lccc|} \hline
		& \bf Prop.\,\ref{pre tiling shortcut} & \bf Cor.\,\ref{tiling shortcut} & \bf Prop.\,\ref{Markov shortcut} \\
		\bf Algorithm & \bf applies & \bf applies & \bf applies \\ \hline
		Nearest integer & Yes & Yes, $Z = \Z[\i]$ & No \\
		Nearest even & Yes & Yes, $Z = \text{evens}$ & Yes \\
		Nearest odd & Yes & Yes, $Z = \text{odds}$ & No \\
		Diamond & Yes & No & No \\
		Disk & Yes & No & Yes \\
		Shifted Hurwitz & Yes & Yes, $Z = \Z[\i]$ & No \\ \hline
	\end{tabular}
	\caption{Additional properties of specific algorithms}
	\label{tab shortcuts}
\end{table}

\begin{samepage}
For some of the algorithms, images of the set
$ \bigcup_{i=1}^N K_i \times S(L_i) $
are shown. Note that $S(L_i)$ is used instead of $L_i$ in Figures~\ref{fig Hurwitz products}, \ref{fig NE products}, \ref{fig diamond products}, and \ref{fig disk products} because generally $S(L_i) \subset \bar\D$ and only figures of bounded sets can be shown in full.
\end{samepage}

\FloatBarrier
\subsection{The nearest integer (Hurwitz) algorithm} \label{ex Hurwitz}

The \emph{nearest integer algorithm} assigns to $z \in \C$ the Gaussian integer closest to~$z$. This algorithm was discussed in detail by Adolf Hurwitz~\cite{AHurwitz} and is also called the \emph{Hurwitz algorithm}.\footnote{\,Brothers Adolf and Julius Hurwitz both studied continued fractions. The term ``Hurwitz algorithm'' generally refers to the nearest integer algorithm, while the nearest even algorithm (\Cref{ex NE}) is sometimes called the ``J.~Hurwitz algorithm'' \cite{Oswald}.} The convention to use when $z$ is has multiple closest integers does not have a great effect since the set of all such $z$ has zero measure, but one common convention is to use
\[ \bra{z} = \floor{\Re z + \tfrac12} + \floor{\Im z + \tfrac12}\i. \] 

Partition the the unit square centered at the origin into the following $12$ regions, which are shown in \Cref{fig Hurwitz partition}.
\< \label{Hurwitz partition} \begin{split}
	K_1 &= \setform{ z \in K }{ \Re z \le 0, \abs{z-\i} \ge 1, \abs{z+\i} \ge 1 } \\
	K_2 &= \setform{ z \in K }{ \abs{z-\i} \le 1, \abs{z+1} \le 1, \abs{z-(-1+\i)} \ge 1 } \\
	K_3 &= \setform{ z \in K }{ \abs{z-(-1+\i)} \le 1 } \\
	K_i &= -\i\, K_{i-3} \quad\text{for } i = 4,\ldots,12.
\end{split} \>

\begin{figure}[ht]
    \begin{tikzpicture}
    \begin{scope}[scale=4]
	\foreach \r in {0,90,180,270} \draw [rotate=\r] (0,0) arc (180:150:1) arc (-150:-120:1) arc (-60:-90:1);
	\draw [thick] (-1/2,-1/2) rectangle (1/2,1/2);
	\foreach \k/\r in {1/180, 4/90, 7/0, 10/-90} \draw (\r:0.4) node {\footnotesize$K_{\k}$};
	\foreach \k/\r in {2/135, 5/45, 8/-45, 11/-135} \draw (\r:0.25) node {$K_{\k}$};
	\foreach \k/\r in {3/135, 6/45, 9/-45, 12/-135} \draw (\r:0.52) node {$K_{\k}$};
	\end{scope}
    \end{tikzpicture}
    \caption{Finite partition of $K$ for the nearest integer algorithm}
    \label{fig Hurwitz partition}
\end{figure}
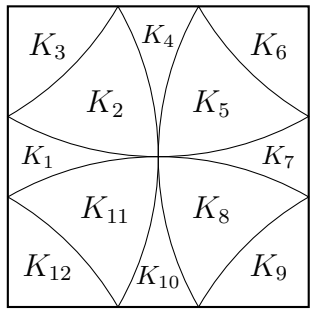

\begin{prop}\label{Hurwitz is good}
	The nearest integer algorithm satisfies the finite building property with $\Pc = \{K_1,\ldots,K_{12}\}$ from Equation~\ref{Hurwitz partition}.
\end{prop}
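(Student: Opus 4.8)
The plan is to apply \Cref{tiling shortcut} (or, if one prefers, \Cref{pre tiling shortcut}) with $Z = \Z[\i]$, since the nearest integer algorithm tiles $\C$ by unit squares and therefore the hypothesis ``$a \in Z$ and $w$ in the interior of $a+K$ implies $\bra w = a$'' is immediate: $K$ is the closed unit square centered at the origin, its interior-closure equals itself, and $\bra{w}$ is by definition the nearest integer, which for $w$ in the interior of $a + K$ is exactly $a$. So the entire burden reduces to a single geometric verification: for each $i \in \{1,\ldots,12\}$, show that $S(K_i) = \setform{-1/z}{z \in K_i}$ can be written as a finite union of integer translates $a + K_j$ of partition pieces, where each $K_j \in \Pc$.

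By the $-\i$-rotational symmetry built into \eqref{Hurwitz partition} (namely $K_i = -\i\,K_{i-3}$) and the fact that $S(-\i z) = -1/(-\i z) = \i\,(-1/z) = \i\, S(z)$, it suffices to check the three base pieces $K_1, K_2, K_3$; the claim for $K_4,\ldots,K_{12}$ then follows by applying powers of the rotation $z \mapsto -\i z$ to both the domain pieces and the translated image pieces (note $\i \cdot (a + K_j) = \i a + \i K_j$ and $\i K_j$ is again a partition piece up to the rotation indexing, since the partition is $-\i$-invariant as a set). So the concrete work is: compute the images under $z \mapsto -1/z$ of the three regions $K_1$ (bounded by the line $\Re z = 0$ and the two arcs $\abs{z \mp \i} = 1$), $K_2$ (a curvilinear triangle bounded by arcs of $\abs{z-\i}=1$, $\abs{z+1}=1$, $\abs{z+1-\i}=1$), and $K_3$ (the lens/region $\abs{z-(-1+\i)} \le 1$ intersected with $K$), and recognize each image as a union of unit-square translates of the $K_j$.

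The key computational tool is that $z \mapsto -1/z$ is a Möbius transformation, hence sends lines/circles to lines/circles; in particular a circle $\abs{z - c} = 1$ not through the origin maps to another circle, the line $\Re z = 0$ (through $0$) maps to a line, and a circle through the origin maps to a line. One tracks a few boundary points and the image of an interior point to pin down each image region exactly, then matches the resulting arcs against the arcs $\abs{z - a \pm \i} = 1$, $\abs{z - a + 1} = 1$, etc., that define the translated partition pieces $a + K_j$. This is exactly the bookkeeping illustrated by \Cref{fig process} and the sample computations of $X = \setform{z \in K}{\abs{z+1}\ge 1}$ shown there; for the write-up one would present the images of $K_1, K_2, K_3$ (likely as three small TikZ pictures analogous to those in the excerpt, or a table listing which translates $a + K_j$ appear), invoke \Cref{tiling shortcut}, and be done.

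The main obstacle is purely the case analysis of the Möbius images: making sure that each $S(K_i)$ decomposes \emph{exactly} into translated partition pieces with no leftover slivers and no overlaps beyond boundaries — this is where the specific choice of the $12$-piece partition \eqref{Hurwitz partition} (as opposed to a coarser one) is essential, and it is the reason the partition has the somewhat intricate form it does. Since $K \not\subset B(0,r)$ for $r<1$, the images $S(K_i)$ genuinely spill outside a single fundamental domain, so several translates $a + K_j$ with $\abs a$ up to roughly $2$ will be involved, and one must enumerate them carefully; the arc-matching identities such as ``$S$ maps the arc $\abs{z+1}=1$ to the line $\Re w = \tfrac12$'' are the load-bearing facts. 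There are no conceptual difficulties beyond this finite check, which is why the statement is a \textbf{Proposition} rather than requiring the heavier machinery of \Cref{partition construction}.
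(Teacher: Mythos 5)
Your plan coincides with the paper's proof: invoke \Cref{tiling shortcut} with $Z=\Z[\i]$, decompose $S(K_1)$, $S(K_2)$, $S(K_3)$ into translates $a+K_j$, and handle $K_4,\ldots,K_{12}$ by the rotational symmetry $S(-\i z)=\i\,S(z)$; the paper does exactly this, writing the three decompositions out explicitly. One concrete correction to your execution plan, though: the claim that ``several translates $a+K_j$ with $\abs a$ up to roughly $2$ will be involved'' is wrong and, taken literally, would leave most of $S(K_1)$ and $S(K_2)$ uncovered. Since $0$ lies on the boundary of $K_1$ and of $K_2$, these images are unbounded, and the correct decompositions contain infinitely many translates; for instance the paper has $S(K_1)=\bigl(2+\bigcup_{j=4}^{10}K_j\bigr)\cup\bigcup_{n\ge3}(n+K)$, and $S(K_2)$ contains the family $n+m\i+K$ over all $\min\{m,n\}\ge2$. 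This does not break the approach—the distant part consists of full translates of $K=\bigcup_j K_j$, and neither \Cref{defn buildable} nor \Cref{tiling shortcut} requires the union to be finite (the ``finite'' in the finite building property refers to the partition, not to the unions)—but your write-up must include these infinite families, so the bookkeeping is a finite check near the origin plus a uniform statement for all sufficiently large $a$ with $a+K\subset S(K_i)$, rather than a purely finite enumeration.
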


\begin{proof}
The nearest integer algorithm satisfies the condition of \Cref{tiling shortcut} with $Z = \Z[\i]$. Thus we must show only that each $S(K_i)$ can be written as a union of sets $a + K_j$ with $a \in \Z[\i]$.
\begin{align*}
	S(K_1) &= \left( 2 + \bigcup_{j=4}^{10} K_j \right) \cup \bigcup_{\substack{n \in \Z \\ n \ge 3}} (n+K) \\
	S(K_2) &= \left( 2+\i + \bigcup_{j=4}^{10} K_j \right) \cup \left( 1+2\i + \bigcup_{j=1}^{7} K_j \right) \\*&\qquad \cup \left( 1+\i + \bigcup_{j=1}^{12} K_j \right) \cup \bigcup_{\substack{n+m\ssi \in \Z[\ssi] \\ \min\{m,n\} \ge 2}} (n+m\i + K) \\
	S(K_3) &= \left( 1+\i + \bigcup_{j=4}^{7} K_j \right) \cup \left( 2+\i + \bigcup_{j=1,2,3,11,12} K_j \right) \\*&\qquad \cup \left( 1+2\i + \bigcup_{j=9}^{12} K_j \right) \cup \bigg( 2+2\i + K_{12} \bigg)
\end{align*}
In \Cref{fig Hurwitz double} the sets $S(K_1), S(K_2), S(K_3)$ are red, orange, and teal, respectively.
\begin{figure}[t]
    \includegraphics{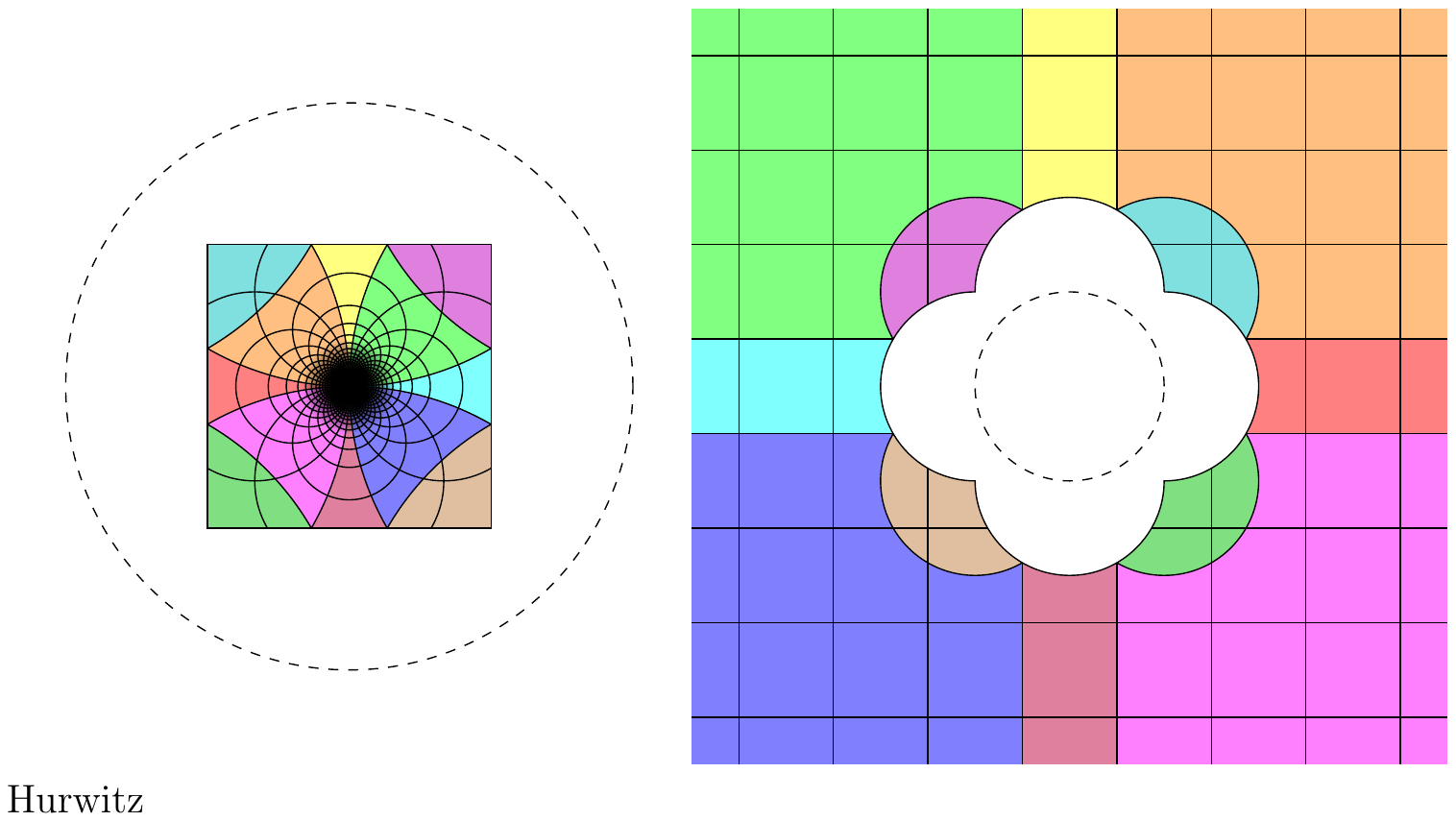}
    \MyCaption{nearest integer}
    \label{fig Hurwitz double}
\end{figure}
By symmetry, expressions for $S(K_i), 4 \le i \le 12$, will be similar.
\end{proof}

For the nearest integer algorithm, explicit expressions for $L_1,\ldots,L_{12}$ such that 
\[ \Omega = \bigcup_{i=1}^{12} K_i \times L_i \]
is a bijectivity domain of $\hat G$ are not known. Computer approximations of these sets (see \Cref{experimental}) are shown in \Cref{fig Hurwitz products} (these sets also appear in \cite[Figures 13-15]{EINN}). The sets $L_i$ appear to each have fractal boundaries, possibly a result of the fact that $\Z[\i]$-translates of $K$ perfectly tile the plane.

\begin{figure}[ht]
    \includegraphics[page=1,trim={4.75cm 19.25cm 4.75cm 2.1cm},clip,width=0.75\textwidth]{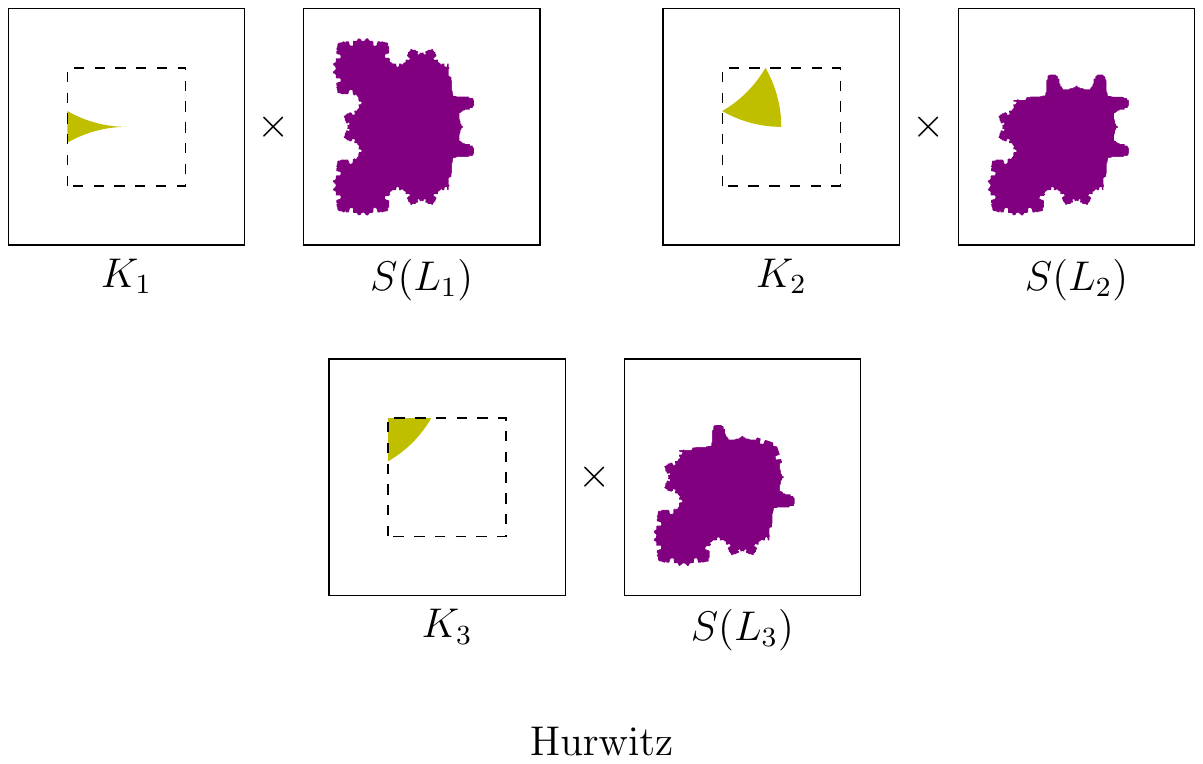}
    \caption{Approximations of some products $K_i \times S(L_i)$ for the nearest integer algorithm. The others are rotations of these}
    \label{fig Hurwitz products}
\end{figure}

\FloatBarrier
\subsection{The nearest even integer algorithm} \label{ex NE}

The \emph{nearest even algorithm} chooses the nearest Gaussian integer $x+y\i$ for which $x+y$ is even (see \Cref{even odd}). The formula
\< \label{NE choice}
	\bra{z} = \floor{\frac{\Re z + \Im z + 1}2} (1+\i) + \floor{\frac{\Re z - \Im z + 1}2} (1-\i)
\>
provides a convention to use for points equidistant from multiple even Gaussian integers.
 The fundamental set $K$ for this algorithm is a diamond with corners $\pm 1$ and $\pm \i$. This algorithm was studied by Julius Hurwitz \cite{JHurwitz} and by Tanaka \cite{T85}.

Define the following eight regions of the diamond, shown in \Cref{fig NE partition}.
\< \label{NE partition} \begin{split}
	K_1 &= \setform{ u \in K }{ \abs{z-\tfrac{-1+\ssi}2} \le \tfrac1{\sqrt2}, \abs{z-\tfrac{-1-\ssi}2} \le \tfrac1{\sqrt2} } \\
	K_2 &= \setform{ u \in K }{ \abs{z-\tfrac{-1+\ssi}2} \le \tfrac1{\sqrt2}, \abs{z-\tfrac{-1-\ssi}2} \ge \tfrac1{\sqrt2}, \abs{z-\tfrac{1+\ssi}2} \ge \tfrac1{\sqrt2} } \\
	K_i &= -\i\,K_{i-2} \quad\text{for } i = 3,\ldots,8.
\end{split} \>

\begin{figure}[ht]
    \begin{tikzpicture}
    \begin{scope}[scale=3]
    \foreach \r in {0,90,180,270} \draw [rotate=\r] (1,0) arc (-45:-225:0.707);
	\draw [thick] (1,0) -- (0,1) -- (-1,0) -- (0,-1) -- cycle;
	\foreach \k in {1,...,8} \draw (225-45*\k:0.525) node {$K_{\k}$};
	\end{scope}
    \end{tikzpicture}
    \caption{Finite partition of $K$ for the nearest even algorithm}
    \label{fig NE partition}
\end{figure}
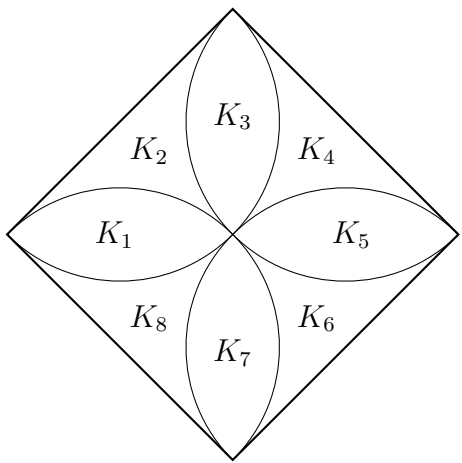

\begin{prop}\label{NE is good}
	The nearest even algorithm satisfies the finite building property with $\Pc = \{K_1,\ldots,K_8\}$ from Equation~\ref{NE partition}.
\end{prop}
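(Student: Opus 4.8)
The plan is to apply \Cref{tiling shortcut}. The fundamental set $K$ is the closed diamond with corners $\pm 1,\pm\i$; it equals the closure of its interior, and its translates by the \emph{even} Gaussian integers tile $\C$ with pairwise disjoint interiors (they are the Voronoi cells of the sublattice of even elements of $\Z[\i]$). From \eqref{NE choice} one verifies that $\bra{w}=a$ whenever $a$ is even and $w$ lies in the interior of $a+K$. Thus the hypotheses of \Cref{tiling shortcut} are met with $Z$ equal to the set of even Gaussian integers, and it remains only to write each $S(K_i)$ as a union of sets $a+K_j$ with $a\in Z$.

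I would first reduce to $i\in\{1,2\}$ by symmetry. We have $-\i\,K_i=K_{i+2}$ for $1\le i\le 6$, while $-\i\,K_7=K_1$ and $-\i\,K_8=K_2$ (multiplication by $-\i$ has order $4$ and cyclically rotates the four corner-wedges of the diamond). Since $S(-\i z)=\i\,S(z)$, since multiplication by $\i$ permutes $\{K_1,\dots,K_8\}$, and since multiplication by $\i$ maps $Z$ onto $Z$, any decomposition $S(K_i)=\bigcup_\alpha\big(a_\alpha+K_{j_\alpha}\big)$ with $a_\alpha\in Z$ is carried by $\i\cdot$ to a decomposition of the same form for $S(K_{i+2})$. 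Hence it suffices to compute $S(K_1)$ and $S(K_2)$.

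Finally, compute $S(K_1)$ and $S(K_2)$ explicitly. Each $K_i$ is a curvilinear region whose boundary consists of finitely many segments contained in $\partial K$ together with finitely many arcs of circles of radius $\tfrac1{\sqrt2}$ centered at the points $\tfrac{\pm1\pm\i}{2}$ (and, since all of these circles pass through $0$, every $K_i$ touches $0$, so each $S(K_i)$ will be an \emph{unbounded} region). As $z\mapsto -1/z$ is a M\"obius transformation it sends each such line or circle to a line or circle — using that $S$ fixes $\pm\i$ and interchanges $\pm1$ makes the images explicit — so $S(K_1)$ and $S(K_2)$ are again regions bounded by finitely many segments, lines, and circular arcs, and one reads off that each is a union of even translates of the partition pieces $K_1,\dots,K_8$, in the same spirit as the nearest integer computation displayed in \Cref{fig Hurwitz double}. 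The one genuinely laborious step is precisely this last bookkeeping — determining exactly which translates $a+K_j$, $a\in Z$, tile the curvilinear regions $S(K_1)$ and $S(K_2)$; everything else is a direct verification of the hypotheses of \Cref{tiling shortcut}.
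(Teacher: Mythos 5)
Your overall framework is legitimate, but the proof is not complete: the step you explicitly defer --- actually writing $S(K_1)$ and $S(K_2)$ as unions of sets $a+K_j$ with $a$ even --- is not bookkeeping surrounding the proof, it \emph{is} the proof. The proposition asserts precisely that this particular eight-element partition \eqref{NE partition} reproduces itself under the Gauss map, and nothing in your general remarks (that $S$ sends lines and circles to lines and circles, that $S(K_i)$ is unbounded, etc.) guarantees that the curvilinear pieces of $S(K_i)$ abutting its curved boundary coincide exactly with even translates of \emph{whole} partition elements rather than with proper subsets of them, which would force a refinement of $\Pc$. Note also that these unions are necessarily infinite (each $K_i$ has $0$ in its closure, so $S(K_i)$ contains a neighborhood of $\infty$), so a complete argument must organize them into explicit families, as the paper does for the nearest integer algorithm in \Cref{Hurwitz is good}. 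As written, you verify the hypotheses of \Cref{tiling shortcut} correctly (with $Z$ the even Gaussian integers; the diamond is the Voronoi cell of the even sublattice, and the symmetry reduction to $i=1,2$ via $S(-\i z)=\i\,S(z)$ is fine), but you then assert the remaining verification instead of performing it.

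For comparison, the paper takes a different and lighter route: since for the nearest even algorithm each cell $\ang a$ is contained in a single $K_i$, it applies \Cref{Markov shortcut} and decomposes $S(\ang a)$ rather than $S(K_i)$. This reduces the verification to finitely many finite lists: $S(\ang a)=a+K$ for every even $a$ with $\abs a\ge 2$, and only the four cells with $\abs a=\sqrt2$ need individual unions, e.g.\ $S\big(\ang{1+\i}\big)=(1+\i)+\bigcup_{j=2}^{6}K_j$. If you keep your route via \Cref{tiling shortcut} (which Table~\ref{tab shortcuts} confirms is available with $Z$ the evens), you must supply the analogous explicit decompositions of $S(K_1)$ and $S(K_2)$, including the infinite families of full translates $a+K$; the Markov-shortcut route avoids that labor entirely.
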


\begin{proof}
For the nearest even algorithm, each $\ang a$ intersects exactly one element of $\Pc$, so we can use \Cref{Markov shortcut}. We want to express each $S(\ang a)$ in the form $a + \bigcup_{j \in J} K_j$.

\begin{figure}[t]
	\includegraphics{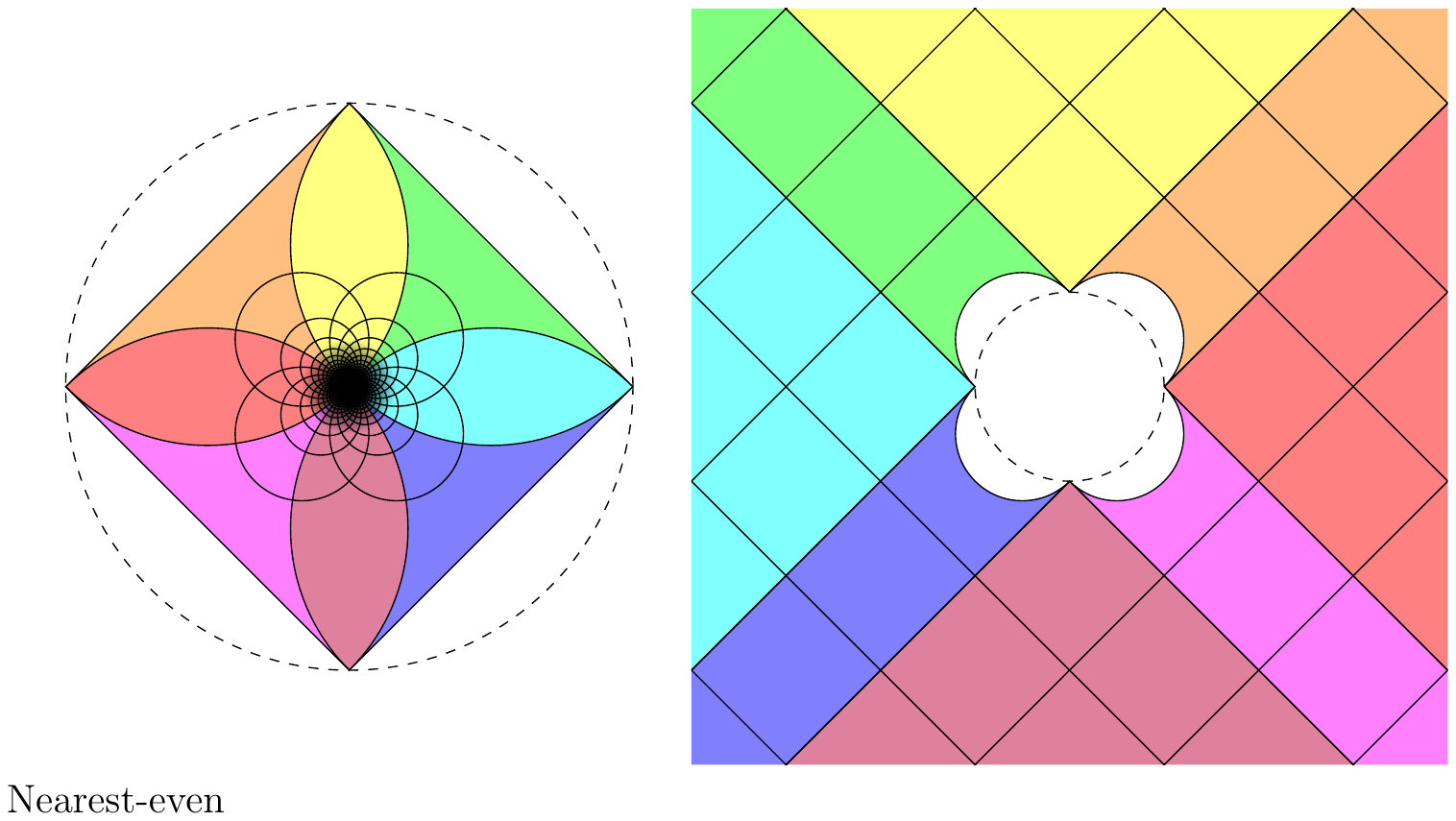}
    \MyCaption{nearest even}
    \label{fig NE double}
\end{figure}

For $a \in \C$ even with $\abs a \ge 2$, we have $S(\ang a) = a + K = a + \bigcup \Pc$.
 
\begin{samepage}
For $\abs a = \sqrt2$, we have
\begin{align*}
    S\big(\ang{1\!+\!\i}\big) &= (1\!+\!\i) + \bigcup_{j=2}^6 K_j, &
    S\big(\ang{-1\!+\!\i}\big) &= (-1\!+\!\i) + \bigcup_{j=1}^4 K_j \cup K_8, \\
    S\big(\ang{-1\!-\!\i}\big) &= (-1\!-\!\i) + \!\!\!\!\!\!\bigcup_{j \in \{1,2,6,7,8\}}\!\!\!\!\!\! K_j, &
    S\big(\ang{1\!-\!\i}\big) &= (1\!-\!\i) + \bigcup_{j=4}^8 K_j.
\end{align*}
Since $\ang 0 = \emptyset$ and $\ang a = \emptyset$ for odd $a$, this covers all $a \in \Z[\i]$.
\end{samepage}
\end{proof}

\begin{remark}
	Tanaka uses this same partition $\{K_1,\ldots,K_8\}$ (with different indices) in \cite{T85} and does express each $G(\ang a)$ as a union of elements from the partition. The two algorithms in \cite{T85} both satisfy the assumptions of \Cref{Markov shortcut}, which nearest integer does not, and this is why for the nearest integer algorithm we must use $K_{i,a} = K_i \cap \ang a$ instead of just $\ang a$.
\end{remark}

\begin{figure}[ht]
    \includegraphics[page=2,trim={4.65cm 22.75cm 4.8cm 2.1cm},clip,width=0.75\textwidth]{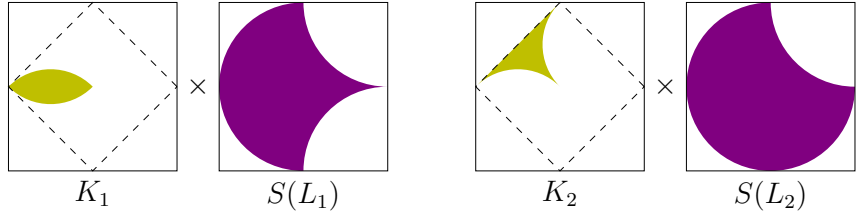}
    \caption{Some products $K_i \times S(L_i)$ for the nearest even algorithm. The others are rotations of these} 
    \label{fig NE products}
\end{figure}

For the nearest integer algorithm only computer-generated approximations of $L_i$ were shown (\Cref{fig Hurwitz products}) but for the nearest even algorithm we can explicitly describe each $L_i$---see \eqref{NE L} and \Cref{fig NE products}---and prove that \eqref{system A} holds.

\begin{samepage}
\begin{thm} \label{NE bijectivity}
	Let $K_1,\ldots,K_8$ be as in \eqref{NE partition}, and define 
	\< \label{NE L} \begin{split}
		L_1 &= \bar\C \setminus \big( B(0) \cup B(-1+\i) \cup B(-1-\i) \big) \\
		L_2 &= \bar\C \setminus \big( B(0) \cup B(-1+\i) \big) \\
		L_i &= -\i \, L_{i-2} \quad\text{for } i=3,\ldots,8. \\[-0.5em]
	\end{split} \>
	The map $\hat G$ for the nearest even algorithm is bijective a.e. on $\bigcup_{i=1}^8 K_i \times L_i$.
\end{thm}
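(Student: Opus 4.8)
The plan is to invoke Theorem~\ref{fps iff system A}: since the nearest even algorithm satisfies the finite building property with the partition $\{K_1,\ldots,K_8\}$ from \eqref{NE partition} (Proposition~\ref{NE is good}), it suffices to verify the system \eqref{system A}, namely $L_i = \bigcup_{(a,j)\in\ExtAlpha_i} T^{-a}SL_j$ for $1\le i\le 8$. By the rotational symmetry $K_i = -\i\,K_{i-2}$ and $L_i=-\i\,L_{i-2}$, and the fact that $S$ and $T^{-a}$ commute with multiplication by $-\i$ in the appropriate sense (conjugating $T^{-a}$ by rotation gives $T^{-(-\i a)}$), it is enough to check the two equations for $i=1$ and $i=2$; the remaining six follow by applying $-\i\,(\cdot)$.

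First I would determine $\ExtAlpha_1$ and $\ExtAlpha_2$ explicitly. Recall $(a,j)\in\ExtAlpha_i$ iff $K_i \subset T^{-a}S(K_{j,a})$, equivalently $ST^a(K_i)\subset K_{j,a}=K_j\cap\ang a$. Since the nearest even algorithm satisfies the hypotheses of Proposition~\ref{Markov shortcut} (each $\ang a$ lies in a single $K_j$), the data of which $\ang a$ sits in which $K_j$ is already recorded in the proof of Proposition~\ref{NE is good}: for $\abs a=\sqrt 2$ the four sets $S(\ang a)$ are listed there, and for even $a$ with $\abs a\ge 2$ one has $S(\ang a)=a+K$. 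Inverting those relations (i.e.\ reading off for each target $K_i$ which translated pieces $a+K_j$ make up $S(\ang a)$) yields the index set $\ExtAlpha_i$. Concretely, $K_1$ appears in $S(\ang{-1+\i})$, $S(\ang{-1-\i})$, and in $S(\ang a)=a+K$ for every even $a$ with $\abs a\ge 2$; $K_2$ appears in those same $S(\ang a)$ plus $S(\ang{1+\i})$ and $S(\ang{-1-\i})$. I would tabulate these carefully.

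Next I would compute the right-hand side of \eqref{system A} for $i=1,2$ using the definitions in \eqref{NE L}. Writing $L_j$ as $\bar\C$ minus a union of unit balls $B(c)$, and using that $S=z\mapsto -1/z$ and $T^{-a}=z\mapsto z-a$ are Möbius maps sending balls (or half-planes/complements of balls) to balls (or half-planes/complements of balls), each $T^{-a}SL_j$ is again a complement of a finite union of balls/half-planes. The key computation is that $SB(c)$ for a unit ball not containing $0$ is again a round disk, and $SB(0)$ (the complement, $S$ of the closed unit disk) is the exterior $\bar\C\setminus\D$; translating by $-a$ then repositions these. One then checks that the union over $(a,j)\in\ExtAlpha_i$ of these sets exactly fills in everything except $B(0)\cup B(-1+\i)\cup B(-1-\i)$ (for $i=1$) or $B(0)\cup B(-1+\i)$ (for $i=2$). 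This is a finite but somewhat intricate bookkeeping of disks in the plane; I would organize it by noting that the ``large $\abs a$'' terms $T^{-a}SL_j = T^{-a}S(\bar\C\setminus(\cdots))$ collectively cover all of $\C$ except a bounded region near the origin, so only finitely many small-$\abs a$ terms need delicate treatment, and those correspond precisely to the $\abs a=\sqrt2$ cases.

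The main obstacle I anticipate is the geometric verification in the last step: showing that the union of the finitely many Möbius-images of the $L_j$ has \emph{exactly} the claimed complement, with the boundary circles matching up as they should (recall the $K_i$ were defined by arcs of circles $B(\tfrac{\pm1\pm\i}{2})$ of radius $\tfrac1{\sqrt2}$, and one must see why $S$ of those produce the unit circles $B(0), B(\pm1\pm\i)$ bounding the $L_i$). I would handle this by explicit coordinate computation of the images of the relevant boundary circles under each $T^{-a}S$, checking tangency/incidence conditions, and appealing to connectedness to conclude the regions coincide rather than just their boundaries. Symmetry under $-\i$ (and also complex conjugation, since the whole configuration is symmetric across the real axis) cuts the casework down substantially. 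Once \eqref{system A} is confirmed for $i=1,\ldots,8$, Theorem~\ref{fps iff system A} immediately gives that $\hat G$ is bijective a.e.\ on $\bigcup_{i=1}^8 K_i\times L_i$, which is the claim.
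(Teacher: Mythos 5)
Your proposal is correct and follows essentially the same route as the paper: invoke Theorem~\ref{fps iff system A}, reduce to $i=1,2$ by the rotational symmetry, use the fact that each $\ang a$ lies in a single $K_j$ to turn $\ExtAlpha_i$ into the condition $i\in J(a)$ with $j=j(a)$ (read off from the proof of Proposition~\ref{NE is good}), and then verify the resulting disk decompositions of $L_1$ and $L_2$. The only difference is presentational: the paper certifies the geometric identity $L_i=\bigcup T^{-a}SL_{j(a)}$ by exhibiting it in \Cref{fig NE unions}, whereas you propose carrying out the Möbius-image bookkeeping of the boundary circles explicitly, which is a more detailed execution of the same step rather than a different argument.
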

\end{samepage}

\begin{proof}
	By \Cref{fps iff system A}, this is equivalent to showing that 
	\< \label{NE union examples} L_i = \bigcup_{(a,j) \in \ExtAlpha_i} T^{-a}SL_j \qquad\text{for } i=1,\ldots,8, \>
	where $\ExtAlpha_i = \setform{ (a,j) }{ T^a K_i \subset S(K_{j,a}) }$ (see \eqref{ExtAlpha}).
	We show proofs here for $i=1$ and $i=2$; the other cases are by symmetry.
	Since these involve $S(L_j)$, we compute
	\< \label{NE SL} \begin{split}
		SL_1 &= \bar\D \setminus \big( B(1+\i) \cup B(1-\i) \big) \\
		SL_2 &= \bar\D \setminus B(1+\i) \\
		SL_j &= \i \, SL_{j-2} \quad\text{for } j=3,\ldots,8.
	\end{split} \>
	Note $SL_1$ and $SL_2$ are shown in purple in \Cref{fig NE products}. \Cref{fig NE unions} shows $L_1$ and $L_2$ as unions of the form $\bigcup T^{-a}SL_j$. To prove \eqref{NE union examples}, it remains only to show that the $(a,j)$ pairs in these unions are indeed $(a,j) \in \ExtAlpha_i$.
	
	\begin{figure}[ht]
    \includegraphics[scale=0.85]{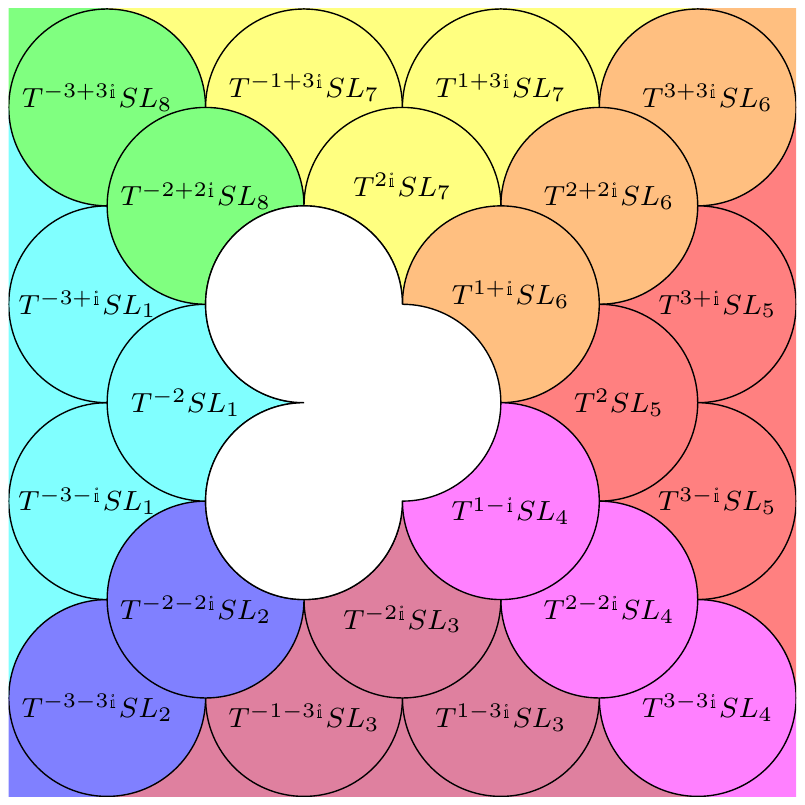} \quad \includegraphics[scale=0.85]{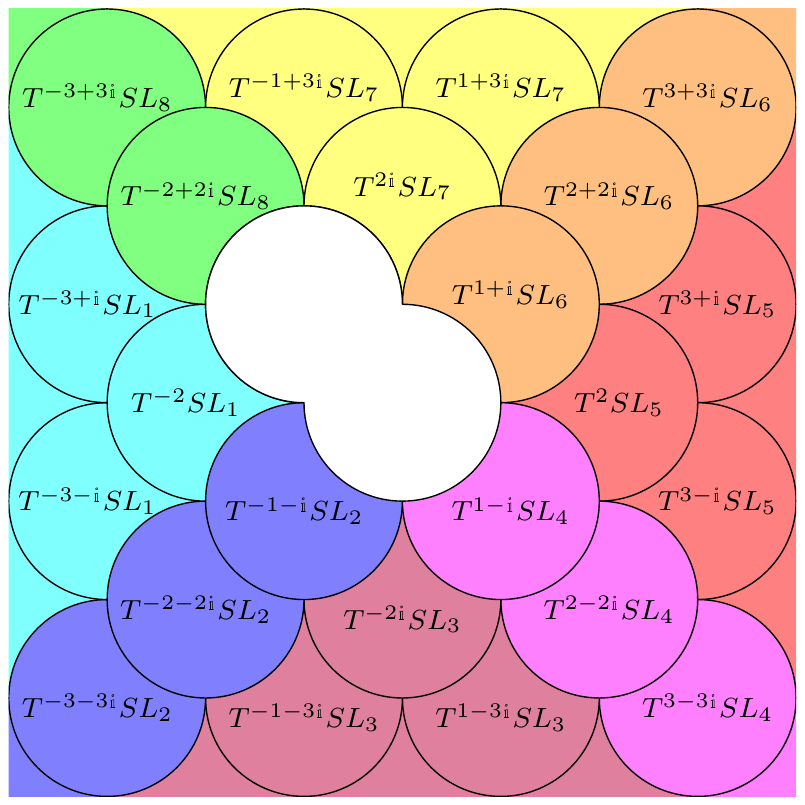}
    \caption{$L_1$ (left) and $L_2$ (right) as unions of sets $T^{-a}SL_j$, colored by $j$}
    \label{fig NE unions}
	\end{figure}

	Which $(a,j)$ satisfy $T^a K_i \subset S K_{j,a}$ for a fixed $i$?
	For the nearest even algorithm each $\ang a$ is contained in exactly one $K_j$, so each $K_{j,a}$ is either empty or is exactly~$\ang a$. Because $T^a K_i$ is never an empty set, requiring $T^a K_i \subset S K_{j,a}$ already rules out empty $K_{j,a}$. Thus we really need only $T^a K_i \subset S(\ang a)$.
	As shown in the proof of \Cref{NE is good}, we have
	\[ S\ang a = a + \bigcup_{k \in J(a)} K_k, \]
	where the sets $J(a)$ are given by
	\< \label{NE Js} \begin{split}
		J(0) &= \emptyset \\
    	J(1+\i)  &= \{ 2,3,4,5,6 \} \\
		J(-1+\i) &= \{ 1,2,3,4,8 \} \\
		J(-1-\i) &= \{ 1,2,6,7,8 \} \\
		J(1-\i)  &= \{ 4,5,6,7,8 \} \\
		J(a) &= \{1,\ldots,8\} \text{ if } \abs a \ge 2.
	\end{split} \>
	Our requirement on $(a,j)$ is really $a + K_i \subset a + \bigcup_{k \in J(a)} K_k$, which is exactly equivalent to $i \in J(a)$. That is,
	\[ (a,j) \in \ExtAlpha_i \qquad\iff\qquad i \in J(a) \text{ and } j = j(a) \] 
	where $j(a) \in \{1,\ldots,N\}$ is such that $\ang a \subset K_{j(a)}$.
	The system \eqref{NE union examples} can thus be written as
	\< \label{NE union examples 2} L_i = \bigcup_{\substack{a\in\Z[\ssi] \\ J(a) \ni i}} T^{-a}S L_{j(a)} \qquad\text{for } i=1,\ldots,8. \>
	
	For $i = 1$, we look at which $a \in \Z[\i]$ satisfy $1 \in J(a)$. From \eqref{NE Js}, this is all even $a$ except for $a = 0, 1+\i, 1-\i$. Indeed, in the left of \Cref{fig NE unions} we see $\bigcup_{a,j} T^{-a}SL_j$ for exactly those $a \ne 0, 1+\i, 1-\i$ (since we use $T^{-a}z = z - a$, the blanks are around $0$, $-1-\i$, and $-1+\i$). Examination of the $j$'s in this union match exactly $j = j(a)$ as well---note that the colors of the partial disks in \Cref{fig NE unions} exactly match the colors of the full/partial diamonds on the right of \Cref{fig NE double}.
	
	For $i = 2$, we look at when $2 \in J(a)$. From \eqref{NE Js}, this is when $a \ne 0, 1-\i$. The blanks on the right of \Cref{fig NE unions} are exactly around $0$ and $-(1-\i) = -1+\i$, and the coloring by $j$ again shows that $j=j(a)$ for each partial disk.
	
	Having shown \eqref{NE union examples 2} for $i=1,2$ and using symmetry for $i=3,\ldots,8$, then \Cref{fps iff system A} gives the result.
\end{proof}

\FloatBarrier
\subsection{The nearest odd algorithm} \label{ex NO}

The \emph{nearest odd algorithm} chooses the nearest Gaussian integer $x+y\i$ for which $x+y$ is odd.

\begin{remark} \label{rem same K}
The nearest even and nearest odd algorithms, as well as the diamond algorithm in \Cref{ex diamond}, have the same fundamental set
\[ K = \setform{ z \in \C }{ \abs{\Re z} + \abs{\Im z} \le 1 } \]
but are distinct algorithms. Compare parts (b), (c), and (d) of~\Cref{fig regions}.
\end{remark}

The finite partition for the nearest odd algorithm contains the following $12$ sets, shown in \Cref{fig NO partition}.
\< \label{NO partition} \begin{split}
	K_1 &= \setform{ u \in K }{ \abs{z-\tfrac{-1-\ssi}2} \le \tfrac1{\sqrt2}, \abs{z-\tfrac{-1+\ssi}2} \le \tfrac1{\sqrt2} } \\
	K_2 &= \setform{ u \in K }{ \abs{z-\tfrac{-1-\ssi}2} \ge \tfrac1{\sqrt2}, 0 \le \Im z \le -\Re z } \\
	K_3 &= \setform{ u \in K }{ \abs{z-\tfrac{1+\ssi}2} \ge \tfrac1{\sqrt2}, -\Im z \le \Re z \le 0 } \\
	K_i &= -\i\,K_{i-3} \quad\text{for } i = 4,\ldots,12.
\end{split} \>

\begin{figure}[ht]
    \begin{tikzpicture}
    \begin{scope}[scale=3]
    \foreach \r in {0,90,180,270} { \draw [rotate=\r] (1,0) arc (-45:-225:0.707); \draw [rotate=\r] (0,0) -- (1/2,1/2); }
	\draw [thick] (1,0) -- (0,1) -- (-1,0) -- (0,-1) -- cycle;
	\foreach \k in {1,4,7,10} \draw (210-30*\k:0.55) node {$K_{\k}$};
	\foreach \k in {2,3,5,6,8,9,11,12} \draw (210-30*\k:0.6) node {\footnotesize$K_{\k}$};
	\end{scope}
    \end{tikzpicture}
    \caption{Finite partition of $K$ for the nearest odd and diamond algorithms}
    \label{fig NO partition}
\end{figure}
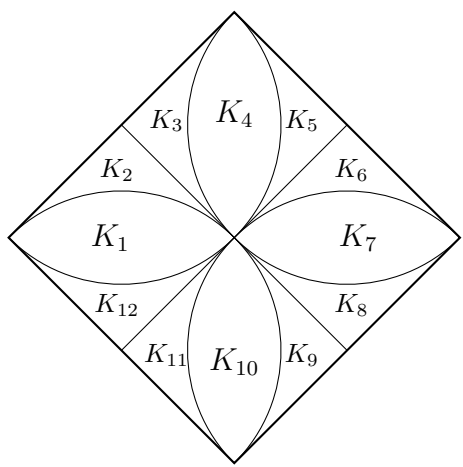

\begin{figure}[hbt]
    \includegraphics{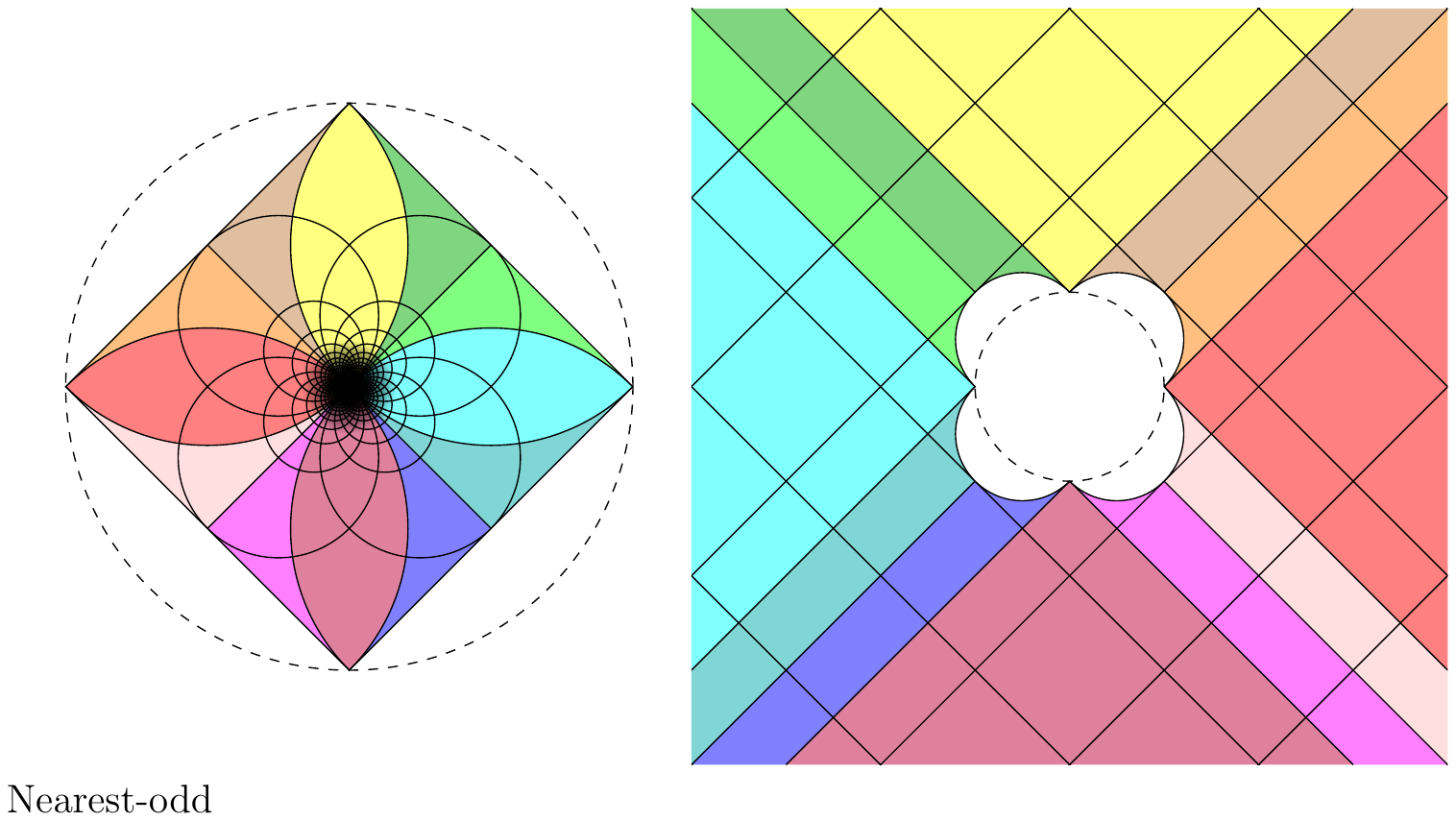}
    \MyCaption{nearest odd}
    \label{fig NO double}
\end{figure}
 
\begin{prop}\label{NO is good}
	The nearest odd algorithm satisfies the finite building property with $\Pc = \{K_1,\ldots,K_{12}\}$ from Equation~\ref{NO partition}.
\end{prop}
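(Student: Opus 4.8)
The plan is to follow the proof of \Cref{Hurwitz is good} almost verbatim, applying \Cref{tiling shortcut} with $Z$ equal to the set of odd Gaussian integers. First I would check the hypotheses of that corollary. The fundamental set here is the diamond $K = \setform{z\in\C}{\abs{\Re z}+\abs{\Im z}\le 1}$ (see \Cref{rem same K}), which is clearly the closure of its interior. The odd Gaussian integers form a single coset of the sublattice $\setform{x+y\i}{x+y\text{ even}} = (1+\i)\Z + (1-\i)\Z$, whose covolume is $2 = \operatorname{area}(K)$; a short computation shows that $K$ is in fact the Voronoi cell of this coset, so that $w \in \operatorname{int}(a+K)$ with $a$ odd forces $a$ to be the unique nearest odd Gaussian integer to $w$, i.e.\ $\bra w = a$. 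This is precisely the condition demanded by \Cref{tiling shortcut}.

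It then remains to show that each $S(K_i)$ is a union of sets of the form $a+K_j$ with $a$ odd. By the $(-\i)$-equivariance built into \eqref{NO partition} --- $K_i = -\i\,K_{i-3}$, and both $S$ and the nearest odd choice function commute with multiplication by $-\i$ --- it suffices to treat $i = 1,2,3$. The computation is made tractable by the observation that every circular arc occurring in \eqref{NO partition} lies on a circle of radius $\tfrac1{\sqrt2}$ whose centre has norm $\tfrac1{\sqrt2}$, i.e.\ a circle through the origin, while the straight edges of $K_2$ and $K_3$ lie on lines through the origin. Since $S(z)=-1/z$ is a M\"obius transformation sending $0$ to $\infty$, it carries each such circle or line to a straight line; consequently each $S(K_i)$ is again a polygonal region, in fact a union of whole and half diamonds, which one reads off directly. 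These images are displayed in \Cref{fig NO double} (red for $S(K_1)$, orange for $S(K_2)$, teal for $S(K_3)$, matching the caption).

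Concretely, for each of $i=1,2,3$ I would compute the images under $S$ of the finitely many arcs and segments bounding $K_i$, identify the resulting polygonal region, and then cut it along the lattice of odd-integer translates of the arcs and lines defining $\{K_1,\ldots,K_{12}\}$; the list of pairs $(a,j)$ with $a$ odd for which $a+K_j$ appears in $S(K_i)$ is exactly what \Cref{fig NO double} records. Handling $i=4,\ldots,12$ by symmetry then completes the verification of the hypothesis of \Cref{tiling shortcut}, and the corollary yields the finite building property.

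The main obstacle is the bookkeeping in this last step rather than any conceptual difficulty. Because $\bra{\cdot}$ is never $0$ for the nearest odd algorithm, $0 \notin S(K_i)$ for any $i$ and only odd-integer translates of $K$ are available, so the combinatorics of which half diamonds tile a given $S(K_i)$ genuinely differs from the nearest integer case and must be checked arc by arc. This is a finite, routine case analysis once the images of the defining circles and lines are in hand, but it is the part that requires care, and where the figure is essential for catching errors in the translates.
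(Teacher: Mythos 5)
Your overall strategy coincides with the paper's: invoke \Cref{tiling shortcut} with $Z$ the odd Gaussian integers (whose translates of the diamond do tile the plane, as your Voronoi-cell check confirms), reduce to $i=1,2,3$ by the rotational symmetry built into \eqref{NO partition}, and exhibit each $S(K_i)$ as a union of sets $a+K_j$ with $a\in Z$. The paper's proof is exactly this, and its substance is the explicit list of those unions for $S(K_1)$, $S(K_2)$, $S(K_3)$.

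There is, however, a concrete flaw in the step you use to make the bookkeeping ``read off directly.'' It is not true that every curve bounding $K_1,K_2,K_3$ passes through the origin: besides the arcs (circles through $0$) and the rays from $0$, the boundaries of $K_2$ and $K_3$ contain segments of $\partial K$ itself, namely the portions of the line $\Im z-\Re z=1$ from $-1$ to $\tfrac{-1+\i}{2}$ and from $\tfrac{-1+\i}{2}$ to $\i$. That line misses the origin, so $S$ sends it to a circle through $0$ (the circle of radius $\tfrac1{\sqrt2}$ centered at $\tfrac{1+\i}{2}$), and consequently $S(K_2)$ and $S(K_3)$ are \emph{not} polygonal: each is an unbounded strip with a curved end. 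In the paper's unions these curved ends are precisely the terms $1+K_5$ in $S(K_2)$ and $\i+K_6$ in $S(K_3)$, genuinely curved partition pieces rather than half diamonds (even the tip of the wedge $S(K_1)$ is covered by $1+(K_6\cup K_7\cup K_8)$, a quarter of the diamond). So the claimed ``whole and half diamonds'' picture would mislead exactly where care is required, and the actual verification --- the lists of pairs $(a,j)$, which constitute the paper's proof --- is left undone in your proposal. (A minor further point: $S(-\i z)=\i\,S(z)$, so $S$ conjugates rather than commutes with the rotation; this still suffices for the symmetry reduction since the partition, the odd lattice, and $\bra{\cdot}$ are rotation-invariant.) The framework is right, but to complete the argument you must compute the three unions explicitly, including their curved pieces, as the paper does.
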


\begin{proof}
	Unlike nearest even, the nearest odd algorithm does not satisfy the assumptions of \Cref{Markov shortcut}. It does, however, satisfy the conditions of \Cref{tiling shortcut} with 
	\[ Z = \setform{ x+y\i \in \Z[\i] }{ x+y\text{ is odd} }. \]
    We want to express each $S(K_i)$ as a union of sets $a + K_j$.
    For $i=1$ we have
    \begin{align*}
		S(K_1) &= \left(1+\bigcup_{j=6}^8 K_j\right) \cup
		\bigcup_{n\in\N} \left(n+1+n\i + \bigcup_{j=6}^{11}K_j\right) 
		\\*&\qquad \cup \bigcup_{n\in\N} \left(n+1-n\i + \bigcup_{j=3}^{8}K_j\right) \cup
		\bigcup_{a \in A} \bigg(a + K\bigg),
	\end{align*}
	where $\N = \{1,2,3,\ldots\}$ and $A$ is the set of $m+n\i \in \Z[\i]$ such that $m+n$ is odd, $n \ge 2$, and $-m+2 \le n \le m-2$.
	
	For $i=2,3$, we have 
	\begin{align*}
		S(K_2) &= \bigg(1+K_5\bigg) \cup \bigcup_{n\in\N} \left((n+1)+n\i + \bigcup_{j=1}^{5}K_j \cup K_{12}\right) \\
		S(K_3) &= \bigg(\i+K_6\bigg) \cup \bigcup_{n\in\N} \left(n+(n+1)\i + \bigcup_{j=6}^{11}K_j\right)\!.
	\end{align*} 
	The other $S(K_i)$ are similar, and the proof is complete by \Cref{tiling shortcut}.
\end{proof}

\FloatBarrier
\begin{samepage}
\subsection{The diamond algorithm} \label{ex diamond}

The \emph{diamond algorithm}, discussed in \cite{myMasters}, uses the choice function constructed as in \Cref{make choice} with $X$ being the diamond with corners $\pm 1$ and $\pm \i$. As mentioned in \Cref{rem same K}, its fundamental set 
\[ K = \setform{ z }{ \abs{\Re z} + \abs{\Im z} \le 1 } \]
is also the fundamental set for the nearest even and nearest odd algorithms. The finite partition of $K$ is also the same partition used with the nearest odd algorithm---see \eqref{NO partition} and \Cref{fig NO partition}.
\end{samepage}

\begin{prop}\label{diamond is good}
	The diamond algorithm satisfies the finite building property with $\Pc = \{K_1,\ldots,K_{12}\}$ from Equation~\ref{NO partition}.
\end{prop}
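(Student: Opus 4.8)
The plan is to verify the hypotheses of \Cref{pre tiling shortcut} for the partition $\Pc=\{K_1,\ldots,K_{12}\}$ of \eqref{NO partition}; by \Cref{tab shortcuts} this is the only one of the three shortcuts available, since translates of the diamond $K$ overlap rather than tile $\C$ (so \Cref{tiling shortcut} fails) and some cells $\ang a$ meet several $K_i$ (so \Cref{Markov shortcut} fails). Concretely, for each $i$ I will exhibit a decomposition $S(K_i)=\bigcup_\alpha W_\alpha$ in which every $W_\alpha$ is a translate $a_\alpha+K_{j(\alpha)}$ of a partition element and $\bra{\cdot}\equiv a_\alpha$ on $W_\alpha$; then $G(K_i)=\bigcup_\alpha T^{-a_\alpha}W_\alpha=\bigcup_\alpha K_{j(\alpha)}$ is buildable from $\Pc$. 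As a preliminary reduction, $K_i=-\i\,K_{i-3}$ and $S(-\i z)=\i\,S(z)$, while the diamond choice function built in \Cref{make choice} is equivariant under multiplication by $\i$ (both $X=K$ and the map $t$ are invariant under rotation by $\pi/2$, so $n(\i z)=n(z)$ and $\bra{\i z}=\i\,\bra{z}$); hence a decomposition of $S(K_{i-3})$ transports, upon multiplying each $W_\alpha$ by $\i$ and using $\i K_j=K_{j-3}$, to one of $S(K_i)$, and it therefore suffices to treat $i=1,2,3$.

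Next I would make the relevant level sets of $\bra{\cdot}$ explicit. For the diamond algorithm $\bra z=z-t^{n(z)}(z)$, where $n(z)$ is the first time the $t$-orbit of $z$ enters $X=K$, so $\{\bra{\cdot}=a\}$ consists of those $z$ with $z-a\in K$ whose $t$-orbit actually passes through $a$. Unlike the nearest-odd algorithm (which uses the very same $K$ and $\Pc$), the diamond algorithm is not a tiling: the translates $a+K$ overlap in small rhombi, and inside each overlap the $t$-dynamics picks out a single Gaussian integer, so the naive guess ``$\bra{\cdot}=a$ on $a+K$'' is false and each level set is $a+K$ with certain overlap rhombi removed. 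The sets $S(K_1),S(K_2),S(K_3)$ are unbounded (each $K_i$ reaches the origin) and bounded by finitely many lines and circular arcs, the arcs being, up to Gaussian-integer translation, the radius-$\tfrac1{\sqrt2}$ arcs about $\tfrac{\pm1\pm\i}2$ that figure in \eqref{NO partition}; I would write each such set as an explicit infinite-but-finitely-patterned union of translates $a+K_j$, in the same spirit as the proof of \Cref{NO is good} but with the $(a,j)$-pattern recomputed for the diamond choice function, and verify translate by translate that $\bra{\cdot}$ is constant there and equal to the claimed $a$. A figure analogous to \Cref{fig NO double} would record the answer and provide a visual check.

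The main obstacle is exactly this last bookkeeping: because the $K$-translates overlap, within each candidate piece $W_\alpha\subset a+K$ one must confirm that the $t$-orbit genuinely routes through $a$ (and not through some neighboring Gaussian integer whose translate also meets $W_\alpha$), that the pieces fit together to cover all of $S(K_i)$ with no leftover region, and that every resulting region is a translate of one of $K_1,\ldots,K_{12}$ — splitting a region into several $W_\alpha$ if it happens to be a translate of a union of partition pieces, which is harmless for buildability. Once this is checked for $i=1,2,3$, the $\i$-equivariance reduction handles $i=4,\ldots,12$, and \Cref{pre tiling shortcut} yields the finite building property.
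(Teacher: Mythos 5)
Your plan is the paper's plan: the paper also proves this proposition via \Cref{pre tiling shortcut} (noting, as you do, that \Cref{tiling shortcut} and \Cref{Markov shortcut} are unavailable), reduces by symmetry, and then exhibits, for each remaining $K_i$, a decomposition of $S(K_i)$ into translates $a+K_j$ on which $\bra{\cdot}\equiv a$. Two small differences: the paper exploits the full $\mathrm{Dih}_4$ symmetry of the diamond (including reflections, under which the partition \eqref{NO partition} is also invariant), so it only needs to treat $K_1$ and $K_2$, whereas your rotation-only reduction leaves you $K_1,K_2,K_3$ --- harmless, just more bookkeeping; and your description of the level sets (translates of $K$ with the overlap rhombi facing the origin removed, selected by the $t$-dynamics) is correct and is exactly what the paper packages into statements like \eqref{diamond cell QI}.

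The gap is that the decisive content is only announced, never carried out. You say you ``would'' recompute the $(a,j)$-pattern and verify, translate by translate, that $\bra{\cdot}$ is constant with the claimed value, and you correctly identify this as the main obstacle --- but the proposition is not proved until that is done. The paper's proof consists precisely of this material: the identification $S(\ang a)=a+\bigcup_{j=3}^{8}K_j$ for $\Re a\ge1$, $\Im a\ge1$ (and its reflected version), the fact $S(\ang 1)=1+\bigcup_{j=5}^{9}K_j$, and then the explicit unions for $S(K_2)$ (pieces $C$, $A_n$, $B_n$ with $\bra{C}=1$, $\bra{A_n}=n+n\i$, $\bra{B_n}=n+(n-1)\i$) and for $S(K_1)$ (the union \eqref{diamond SK1 union}), each term checked against those level-set facts. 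Without producing these decompositions and the accompanying digit verifications --- in particular, without pinning down exactly which half of each overlapping translate the diamond choice function assigns to which Gaussian integer --- your argument establishes the strategy but not the result, so as it stands it is an outline of the paper's proof rather than a proof.
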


\begin{proof}
	This is the only algorithm discussed in this paper for which neither \Cref{tiling shortcut} nor \Cref{Markov shortcut} apply (see \Cref{tab shortcuts}), so we will use \Cref{pre tiling shortcut}.
	
    \begin{figure}[hbt]
        \includegraphics{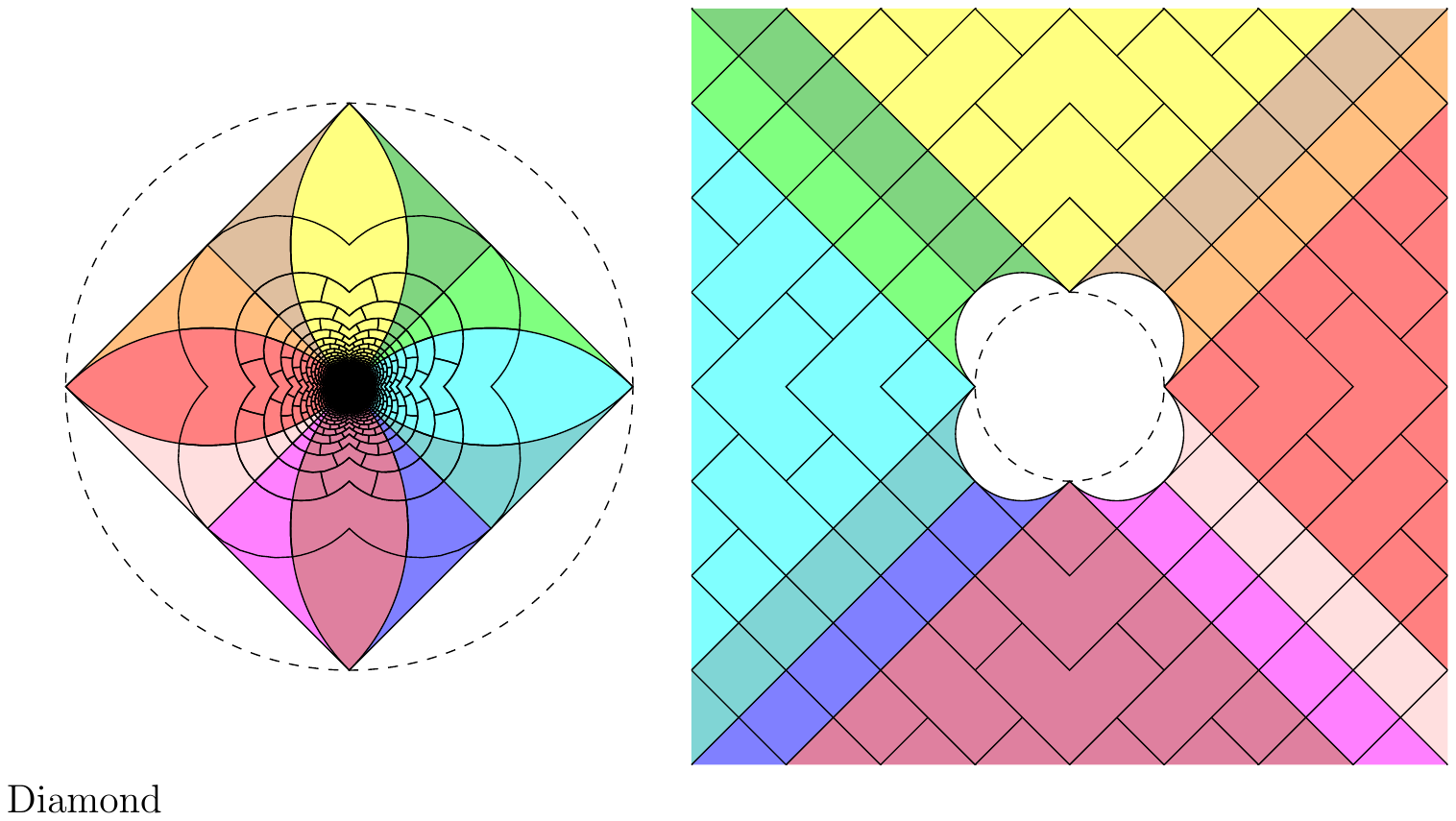}
        \MyCaption{diamond}
        \label{fig diamond double}
    \end{figure}
    
	From \Cref{fig diamond double}, one can see that each $S(K_i)$ is a union of sets of the form $a + K_j$. However, care must be taken in when determining $\bra{a+K_j}$. For example,
	\begin{align*}
		\bra{ (2+2\i) + K_8 } &= 2+2\i, \\*
		\bra{ (2+2\i) + K_9 } &= 2+\i \qquad\text{(not $2+2\i$)}
	\end{align*}
	because
	\< \label{diamond cell QI} S(\ang a) = a + \bigcup_{j=3}^{8} K_j \quad\text{if }\Re a\ge1\text{ and }\Im a\ge1. \>
	
	To use \Cref{pre tiling shortcut}, we must show not only that each $S(K_i)$ is a union of sets $a + K_j$ but also that this can be done with $\bra{a+K_j} = a$ for each set in the union. 
	
	The symmetry group of the diamond $K$ is $\mathrm{Dih}_4$ (order $8$), and indeed for any $\xi : \C \to \C$ in $\mathrm{Dih}_4$ (here $\C \cong \R^2$) we have that 
	\[ \bra{\xi z} = \xi \bra{z}. \]
	This implies that 
	\[ S(\ang{\xi a}) = \xi S(\ang a) \]
	for all $\xi \in \mathrm{Dih}_4$ and all $a \in \Z[\i]$. The partition in \eqref{NO partition} also respects this symmetry: for each $\xi \in \mathrm{Dih}_4$ there exists a permutation $\sigma_{\xi} : \{1,\ldots,12\} \to \{1,\ldots,12\}$ such that $\xi(K_i) = K_{\sigma_{\xi}(i)}$ for all $1 \le i \le 12$. Therefore we need only to show that the conditions of \Cref{pre tiling shortcut} are satisfied for $K_1$ and $K_2$. After that, symmetry will handle all remaining $K_i$.
	
	\medskip
	First, consider $K_2$.
	\[ S(K_2) = \setform{ w \in \C }{ \abs{w-(\tfrac12 + \tfrac12\i)} \ge 1, \Re w \ge 1, \Re w - 1 \le \Im w \le \Re w }, \]
	which is orange on the right of \Cref{fig diamond double},
	may be decomposed into
	\[ S(K_2) = C \cup \bigcup_{n \in \N} A_n \cup \bigcup_{n \in \N, n \ge 2} B_n, \]
	where $C$ is the curved set $1+K_5 = 1+\i+K_9$ and $A_n$ and $B_n$ are the countably many small (side length $\frac1{\sqrt2}$) diamonds 
	\[ A_n := \big(n+n\i\big) + \big(K_6 \cup K_7 \cup K_8\big) = \big((n+1)+n\i\big) + \big(K_{12} \cup K_1 \cup K_2\big). \]
	and 
	\[ B_n := \big(n+n\i\big) + \big(K_9 \cup K_{10} \cup K_{11}\big) = \big(n+(n-1)\i\big) + \big(K_3 \cup K_4 \cup K_5\big). \]
	Using \eqref{diamond cell QI}, we see that $\bra{A_n} = n+n\i$ for $n \ge 1$ and $\bra{B_n} = n+(n-1)\i$ for $n \ge 2$. Additionally, $S(\ang{1}) = 1 + \bigcup_{j=5}^9 K_j$ gives that $\bra{C} = 1$. Therefore 
	\[ S(K_2) = \bigg(1+K_5\bigg) \bigcup_{n \in \N} \bigg(n+n\i + K_6 \cup K_7 \cup K_8\bigg) \cup \bigcup_{\substack{n\in\N\\ n\ge2}} \bigg(n+(n-1)\i + K_3 \cup K_4 \cup K_5\bigg) \]
	is a union of the form required by \Cref{pre tiling shortcut}.
	
	\medskip
	Now consider $K_1$.
	\[ S(K_1) = \setform{ w \in \C }{ -\tfrac\pi4 \le \arg(z-1) \le \tfrac\pi4 } \]
	is red on the right of \Cref{fig diamond double}. For this we will immediately write the correct union expression and then verify it:
	\< \label{diamond SK1 union}
		S(K_1) = 
		\bigcup_{\substack{n\in\N\\ n\ge2}} A_n
		\cup \hspace{-1em}\bigcup_{\substack{a\in\Z[\ssi]\\ 0<\arg a<\pi/4}}\hspace{-1em} B_a
		\cup \;C\;
		\cup \bigcup_{\substack{n\in\N\\ n\ge2}} D_n
		\cup \hspace{-1em}\bigcup_{\substack{a\in\Z[\ssi]\\ -\pi/4<\arg a<0}}\hspace{-1em} E_a
		\cup \bigcup_{\substack{n\in\N\\ n\ge2}} F_n,
	\>
	where
	\begin{align*}
		A_n &=\textstyle n+(n-1)\i + \bigcup_{j=6}^{8} K_j \\
		B_a &=\textstyle a + \bigcup_{j=3}^{8} K_j \\
		C &=\textstyle 1 + \bigcup_{j=6}^8 K_j \\
		D_n &=\textstyle n + \bigcup_{j=3}^{11} K_j \\
		E_a &=\textstyle a + \bigcup_{j=6}^{11} K_j \\
		F_n &=\textstyle n-(n-1)\i + \bigcup_{j=6}^{8} K_j.
	\end{align*}
	We use \eqref{diamond cell QI} to get $\bra{A_n} = n+(n-1)\i$ and $\bra{B_a} = a$. As before, $S(\ang{1}) = 1 + \bigcup_{j=5}^9 K_j$ gives that $\bra{C} = 1$ (this is a different $C$ than was used for the discussion of $K_1$, but $\bra{C} = 1$ for both). To set $\bra{E_a} = a$ and $\bra{F_n} = n-(n-1)\i$, we use a symmetric version of \eqref{diamond cell QI}, namely,
	\[ S(\ang a) = a + \bigcup_{j=6}^{11} K_j \quad\text{if }\Re a\ge1\text{ and }\Im a\le-1. \]
	Altogether, we now have that $\bra{a+K_j} = a$ for every term in the union \eqref{diamond SK1 union}. \Cref{pre tiling shortcut} then proves that the diamond algorithm satisfies the finite building property.
\end{proof}

\begin{thm}
	Let $K_1,\ldots,K_{12}$ be as in \eqref{NO partition}, and define 
	\< \label{diamond L} \begin{split}
		L_1 &= \bar\C \setminus \big( B(\i) \cup B(0) \cup B(-\i) \cup \{ w:\Re w < \tfrac{-1}2 \}\big) \\
		L_2 &= \bar\C \setminus \big( B(\i) \cup B(0) \cup \{ w:\Re w < \tfrac{-1}2 \}\big) \\
		L_3 &= \bar\C \setminus \big( B(1) \cup B(0) \cup \{ w:\Im w < \tfrac{-1}2 \}\big) \\
		L_i &= -\i L_{i-3} \quad\text{for } i=4,\ldots,12.
	\end{split} \>
	The map $\hat G$ for the diamond algorithm is bijective a.e. on $\bigcup_{i=1}^{12} K_i \times L_i$.
\end{thm}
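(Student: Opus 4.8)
The plan is to reduce the statement, via \Cref{fps iff system A}, to verifying the system \eqref{system A}. Since the diamond algorithm satisfies the finite building property with the partition $\{K_1,\ldots,K_{12}\}$ of \eqref{NO partition} (\Cref{diamond is good}), the map $\hat G$ is bijective a.e.\ on $\bigcup_{i=1}^{12}K_i\times L_i$ if and only if $L_i=\bigcup_{(a,j)\in\ExtAlpha_i}T^{-a}SL_j$ for $1\le i\le 12$, where $\ExtAlpha_i=\setform{(a,j)}{ST^a(K_i)\subset K_{j,a}}$. As in the proof of \Cref{NE bijectivity}, it is cleaner to work with the bounded sets $SL_j$; applying $S$ to \eqref{diamond L} (and using $S(\D)=\bar\C\setminus\bar\D$, and that circles through $0$ go to lines) one finds
\begin{align*}
	SL_1 &= \setform{w\in\bar\D}{\abs{\Im w}\le\tfrac12}\setminus B(1), \\
	SL_2 &= \setform{w\in\bar\D}{\Im w\le\tfrac12}\setminus B(1), \\
	SL_3 &= \setform{w\in\bar\D}{\Re w\ge-\tfrac12}\setminus B(-\i),
\end{align*}
together with $SL_j=\i\,S(L_{j-3})$ for $j=4,\ldots,12$, since $S(-\i z)=\i\,S(z)$.

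Both the algorithm and the partition \eqref{NO partition} are invariant under $z\mapsto-\i z$ --- this was already exploited in the proof of \Cref{diamond is good} --- and the sets in \eqref{diamond L} are built to respect the same symmetry, $L_i=-\i L_{i-3}$. Hence it suffices to prove the three equations $i=1,2,3$; the other nine follow by applying the rotation. Unlike in \Cref{NE bijectivity}, where two base cases sufficed, here the three ``flanking'' pieces $K_1,K_2,K_3$ lie in distinct orbits of the rotation, so all three must be treated.

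The crux is determining $\ExtAlpha_i$ for $i\in\{1,2,3\}$. For the diamond algorithm a single cell $\ang a$ meets several of the $K_j$, so $K_{j,a}=K_j\cap\ang a$ does \emph{not} reduce to $\ang a$ the way it did in the nearest even case. Instead one uses the following reformulation: $(a,j)\in\ExtAlpha_i$ precisely when $a+K_i$ occurs, with $\bra{a+K_i}=a$, as a term in the $\bra{\cdot}$-respecting decomposition of $S(K_j)$ into pieces $b+K_\ell$ (this is the decomposition called for by \Cref{pre tiling shortcut}). Such decompositions of $S(K_1)$ and $S(K_2)$ were produced in the proof of \Cref{diamond is good} (see \eqref{diamond SK1 union}), the decomposition of $S(K_3)$ follows by a reflection symmetry, and the remaining $S(K_j)$ are rotations of these; so $\ExtAlpha_i$ is obtained by scanning all twelve decompositions and collecting every translate of $K_i$. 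It is essential here to attach to each piece the value of $\bra{\cdot}$ fixed by formulas like \eqref{diamond cell QI}: for instance the small diamond $(n+n\i)+(K_6\cup K_7\cup K_8)$ in $S(K_2)$ equals $((n+1)+n\i)+(K_{12}\cup K_1\cup K_2)$ as a set, but only the first representation carries the correct $\bra{\cdot}$, so it contributes $(n+n\i,2)$ to $\ExtAlpha_6,\ExtAlpha_7,\ExtAlpha_8$ and \emph{not} $((n+1)+n\i,2)$ to $\ExtAlpha_{12},\ExtAlpha_1,\ExtAlpha_2$. In each of the three cases one ends up with a short list of small-norm ``exceptional'' pairs together with one or two families of pairs indexed by $n\in\N$.

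Finally, for $i\in\{1,2,3\}$ one substitutes the resulting list into $\bigcup_{(a,j)\in\ExtAlpha_i}T^{-a}SL_j$ and checks --- by elementary computations with circular arcs and the lines $\Re w=\pm\tfrac12$, $\Im w=\pm\tfrac12$ --- that the translated and rotated copies of the three basic regions above tile $SL_i$ up to measure zero. This is the complex analogue of the interval bookkeeping in \Cref{fig nonleading}, and I would present it alongside a figure in the spirit of \Cref{fig NE unions} exhibiting $L_1,L_2,L_3$ as such unions, colored by $j$. The main obstacle is exactly the irregularity that made \Cref{diamond is good} delicate: because $\Z[\i]$-translates of $K$ do not tile the plane here, the cells $\ang a$ overlap the partition non-uniformly near the Gaussian integers with $\Re a,\Im a\ge 1$ and their images under $\mathrm{Dih}_4$, so matching each boundary arc of each $B(\cdot)$ to the right $(a,j)$ and confirming that no part of $L_i$ is double-covered or omitted takes care. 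Once the pairing is pinned down, each containment $ST^a(K_i)\subset K_{j,a}$ is routine, and \Cref{fps iff system A} delivers the result.
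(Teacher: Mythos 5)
Your plan is sound, but it is worth noting that the paper itself does not argue this way: for the diamond algorithm it simply records that the statement is essentially \cite[Theorem~8]{myMasters}, where the result is proved for a ``slow'' version of the transformation, and it leaves the translation between that slow map and the Gauss map $G$ implicit. You instead supply a direct, self-contained verification in the style of \Cref{NE bijectivity}: reduce to the system \eqref{system A} via \Cref{fps iff system A}, pass to the bounded sets $SL_1$, $SL_2$, $SL_3$ (your formulas for these are correct), cut the work down to $i=1,2,3$ using the rotational equivariance $\bra{-\i z} = -\i\,\bra{z}$, and extract $\ExtAlpha_i$ from the $\bra{\cdot}$-respecting decompositions of the sets $S(K_j)$ built in the proof of \Cref{diamond is good}. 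Your reformulation of $(a,j)\in\ExtAlpha_i$, and your warning that a set such as $(n+n\i)+(K_6\cup K_7\cup K_8)=((n+1)+n\i)+(K_{12}\cup K_1\cup K_2)$ contributes only through the representation carrying the correct value of $\bra{\cdot}$ fixed by \eqref{diamond cell QI}, are exactly the points at which the diamond algorithm differs from the nearest even case (where the Markov-type shortcut made $K_{j,a}=\ang a$), and you handle them correctly. What your route buys is a proof living entirely inside this paper's framework, at the same level of detail as the nearest even and disk cases; what it costs is the final bookkeeping --- infinite families of pairs $(a,j)$ and a covering check for each unbounded $L_i$ --- which you describe and propose to support with a figure in the spirit of \Cref{fig NE unions} but do not carry out, which matches the standard of rigor the paper itself applies in \Cref{NE bijectivity}. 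One small slip to fix: the union $\bigcup_{(a,j)\in\ExtAlpha_i}T^{-a}SL_j$ must be shown to equal $L_i$, not $SL_i$ (equivalently, apply $S$ throughout and verify $\bigcup_{(a,j)\in\ExtAlpha_i} ST^{-a}SL_j = SL_i$).
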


This is essentially \cite[Theorem 8]{myMasters}, although in that paper a ``slow'' map is used, analogous to the real map $h:\R \to \R$
	\[ h(x) = \left\{ \begin{array}{ll} x+1 &\text{if } x \le a \\ -1/x &\text{if } a\le x< b \\ x-1 &\text{if } x \ge b \end{array} \right. \]
	as opposed to the Gauss map $g:[a,b) \to [a,b)$ given by
	\[ g(x) = \frac{-1}x - \bra{\frac{-1}x} \]
	(``slow'' because for each $x$ we have $g(x) = h^n(x)$ some some $n$). \\

\begin{figure}[ht]
    \includegraphics[page=4,trim={4.65cm 22.75cm 4.8cm 2.1cm},clip,width=0.75\textwidth]{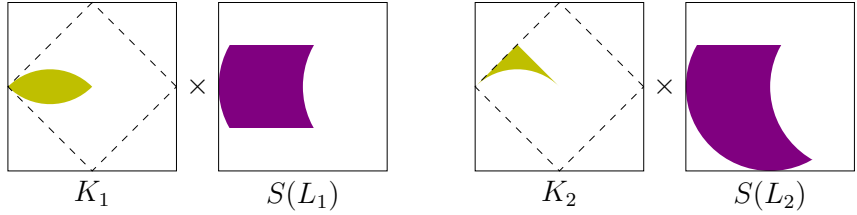}
    \caption{Some products $K_i \times S(L_i)$ for the diamond algorithm. The others are rotations or reflections of these}
    \label{fig diamond products}
\end{figure}

\FloatBarrier
\subsection{The disk algorithm} \label{ex disk}

In \cite{T85}, Tanaka discuses two complex plus continued fraction algorithms that both use only digits in the ideal generated by $\alpha := 1 + \i$ in the ring $\Z[\i]$, that is, the set
\[
	E = (\alpha) = \setform{ n \alpha + m \overline\alpha }{ n,m \in \Z }.
\]
It is worth pointing out that this set can be equivalently defined as
\< \label{E}
	E = \setform{ x+y\i \in \Z[\i] }{ x+y \text{ is even} }.
\>
The first of Tanaka's algorithms is the nearest even algorithm described previously. The second, called here the \emph{disk algorithm}, is described as follows.
Define nine subsets (highly overlapping) of the unit disk $\bar\D$ by
\< \label{Vs}
	\begin{split}
		V_0 &= \bar\D
		\qquad
		V_1 = \setform{ w \in \bar\D }{ \abs{w+\alpha} \ge 1 }
		\qquad
		V_5 = V_1 \cap V_2 \\
		V_j &= -\i \, V_{j-1} \quad\text{for } j=2,3,4,6,7,8
	\end{split}
\>
These are shown in Figure~\ref{fig Vs}.
\begin{figure}[h]
    \newcommand\tikzV[1]{\begin{tikzpicture}[scale=0.9] \fill [gray] (0,0) circle (0.99); \begin{scope} \clip (0,0) circle (1); #1 \end{scope} \draw (-1.1,0) -- (1.1,0); \draw (0,-1.1) -- (0,1.1); \end{tikzpicture}}
    $ \begin{array}{c} \tikzV{} \\ V_0 \end{array}\;\;
    	\begin{array}{cccc}
    	\tikzV{ \fill [white] (-1,-1) circle (1); } &
    	\tikzV{ \fill [white] (-1,1) circle (1); } &
    	\tikzV{ \fill [white] (1,1) circle (1); } &
    	\tikzV{ \fill [white] (1,-1) circle (1); } \\
    	V_1 & V_2 & V_3 & V_4 \\[1em]
    	\tikzV{ \fill [white] (-1,-1) circle (1) (-1,1) circle (1); } &
    	\tikzV{ \fill [white] (-1,1) circle (1) (1,1) circle (1); } &
    	\tikzV{ \fill [white] (1,1) circle (1) (1,-1) circle (1); } &
    	\tikzV{ \fill [white] (1,-1) circle (1) (-1,-1) circle (1); } \\
    	V_5 & V_6 & V_7 & V_8 \\[1em]
    \end{array} $
    \caption{The sets $V_0, V_1, \ldots, V_8 \subseteq \D$.}
    \label{fig Vs}
\end{figure}

\noindent The even integers $E$ are then partitioned into nine regions:
\[ \label{Es}
	\begin{split}
		E_0 &= \{0\}
		\qquad
		E_1 = \setform[\!:\!]{ n \alpha }{ n > 0 }
		\qquad
		E_5 = \setform[\!:\!]{ n\alpha + m\overline\alpha }{ n,m > 0 } \\
		E_j &= -\i \, E_{j-1} \quad\text{for } j=2,3,4,6,7,8.
	\end{split}
\]
Lastly, we denote by $V(a)$ whichever set $V_j$ satisfies $a \in E_j$. The choice function is then defined as
\< \label{disk choice}
	\bra{w}
	= a \in E \quad\text{if}\quad w \in a + V(a).
\>
In \Cref{fig regions}(e), all the colored regions along the ray $\arg z = \pi/4$ are translates of $V_1$, and all the colored regions on the right are translates of $V_5$.

\medskip
For the finite building property, we partition the disk into five regions:
\< \label{disk partition} \begin{split}
		K_2 &= \setform{ z \in \bar\D }{ \abs{z-(-1+\i)} \le 1 } \\
    	K_i &= -\i\, K_{i-1} \quad\text{for } i = 3,4,5 \\
    	K_1 &= \bar\D \setminus \big( K_2 \cup K_3 \cup  K_4 \cup K_5 \big).
	\end{split} \>

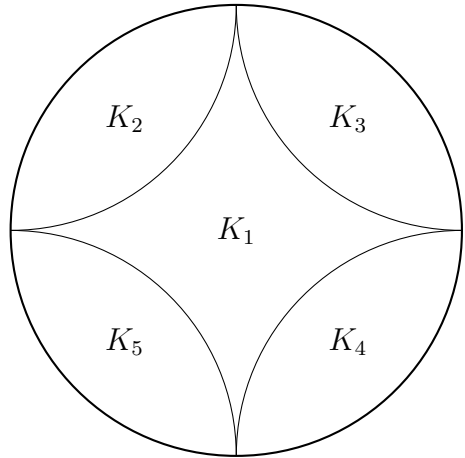
\begin{figure}[h]
    \begin{tikzpicture}
    \begin{scope}[scale=3]
    \foreach \r in {0,90,180,270} \draw [rotate=\r] (1,0) arc (-90:-180:1);
	\draw [thick] (0,0) circle (1);
	\draw (0,0) node {$K_1$};
	\foreach \k in {2,3,4,5} \draw (-45-90*\k:0.7) node {$K_{\k}$};
	\end{scope}
    \end{tikzpicture}
    \caption{Finite partition of $K$ for the disk algorithm}
    \label{fig disk partition}
\end{figure}

\begin{lem} \label{disk parts are good}
	Each set $V_j$ from \eqref{Vs} is buildable from $\{K_1,\ldots,K_5\}$.
\end{lem}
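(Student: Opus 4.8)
The plan is to exploit the fourfold rotational symmetry built into the partition \eqref{disk partition} and reduce the nine sets in \eqref{Vs} to three ``generators'' $V_0$, $V_1$, $V_5$. First I would record the symmetry precisely. Since $-\i(-1+\i)=1+\i$, $-\i(1+\i)=1-\i$, $-\i(1-\i)=-1-\i$, and $-\i(-1-\i)=-1+\i$, multiplication by $-\i$ sends the closed disk $\{\,z:|z-(-1+\i)|\le 1\,\}$ to $\{\,z:|z-(1+\i)|\le 1\,\}$, and so on cyclically; intersecting with $\bar\D$ (which is $-\i$-invariant) gives $-\i K_i = K_{i+1}$ for $i=2,3,4$ and $-\i K_5 = K_2$. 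Because $K_1=\bar\D\setminus(K_2\cup K_3\cup K_4\cup K_5)$, it follows that $-\i K_1 = K_1$ as well, so the whole collection $\{K_1,\ldots,K_5\}$ is permuted by $w\mapsto -\i w$.

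Next I would treat the generators and propagate by symmetry. For $V_0=\bar\D$ there is nothing to do: $V_0 = K_1\cup K_2\cup K_3\cup K_4\cup K_5$ directly from the definition of $K_1$. For $V_1=\{\,w\in\bar\D:|w+\alpha|\ge1\,\}$ with $\alpha=1+\i$, the key observation is that $K_5$, written out, is exactly the centred set $\{\,w\in\bar\D:|w+\alpha|\le1\,\}$: indeed $K_5=-\i K_4=(-\i)^3K_2=\i K_2$, and substituting $w=\i z$ into $|z-(-1+\i)|\le1$ gives $|{-\i}w+1-\i|=|w+\alpha|\le1$. Hence $V_1\cup K_5=\bar\D$ while $V_1\cap K_5$ is contained in the circle $|w+\alpha|=1$, a null set; therefore $V_1=\bar\D\setminus K_5=K_1\cup K_2\cup K_3\cup K_4$ up to measure zero, which is buildable from $\{K_1,\ldots,K_5\}$. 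Applying $-\i$ repeatedly, using the permutation from the first step, shows $V_j$ is buildable for $j=2,3,4$, with $V_2=K_1\cup K_3\cup K_4\cup K_5$, $V_3=K_1\cup K_2\cup K_4\cup K_5$, and $V_4=K_1\cup K_2\cup K_3\cup K_5$.

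Finally, for $V_5=V_1\cap V_2$ I would intersect the two descriptions just obtained: $V_5=(K_1\cup K_2\cup K_3\cup K_4)\cap(K_1\cup K_3\cup K_4\cup K_5)$. Since distinct $K_i$ have disjoint interiors, this equals $K_1\cup K_3\cup K_4$ up to measure zero, hence is buildable; and once more applying $-\i$ gives the buildable descriptions $V_6=K_1\cup K_4\cup K_5$, $V_7=K_1\cup K_2\cup K_5$, and $V_8=K_1\cup K_2\cup K_3$, exhausting \eqref{Vs}. The only point requiring care is the null-set bookkeeping together with the algebraic identification of $K_5$ with a centred disk: ``buildable'' is an equality only up to measure zero, so I must verify that the sets discarded at each step --- the overlap of $V_1$ with $K_5$, and the pairwise overlaps of the $K_i$ occurring in $V_1\cap V_2$ --- lie on finitely many circular arcs and are therefore negligible. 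Since each $\partial K_i$ is a finite union of circular arcs this is immediate, so I expect no genuine obstacle beyond this routine verification and the elementary Gaussian-integer arithmetic underlying the $-\i$-symmetry.
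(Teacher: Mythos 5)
Your proposal is correct and arrives at exactly the decompositions the paper lists ($V_1=K_1\cup K_2\cup K_3\cup K_4$, $V_5=K_1\cup K_3\cup K_4$, and their rotations), so it is essentially the same proof; the only difference is that you justify these unions by the $-\i$-rotational symmetry of the partition and the identification of $K_5$ with $\setform{w\in\bar\D}{\abs{w+\alpha}\le 1}$, whereas the paper simply reads them off Figures~\ref{fig Vs} and~\ref{fig disk partition}. Your null-set bookkeeping (overlaps confined to finitely many circular arcs) is consistent with the ``up to measure zero'' notion of buildable in \Cref{defn buildable}, so no gap remains.
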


\begin{proof}
    This can be seen in \nameCref{fig disk partition}s~\ref{fig Vs} and~\ref{fig disk partition}. The explicit unions are $V_0 = \bigcup \Pc$ and
    \begin{align*}
		V_1 &= K_1 \cup K_2 \cup K_3 \cup K_4 &
		V_5 &= K_1 \cup K_3 \cup K_4 \\*
		V_2 &= K_1 \cup K_3 \cup K_4 \cup K_5 &
		V_6 &= K_1 \cup K_4 \cup K_5 \\*
		V_3 &= K_1 \cup K_2 \cup K_4 \cup K_5 &
		V_7 &= K_1 \cup K_2 \cup K_5 \\*
		V_4 &= K_1 \cup K_2 \cup K_3 \cup K_5 &
		V_8 &= K_1 \cup K_2 \cup K_3. \qedhere
    \end{align*}
\end{proof}

\begin{figure}[hbt]
    \includegraphics{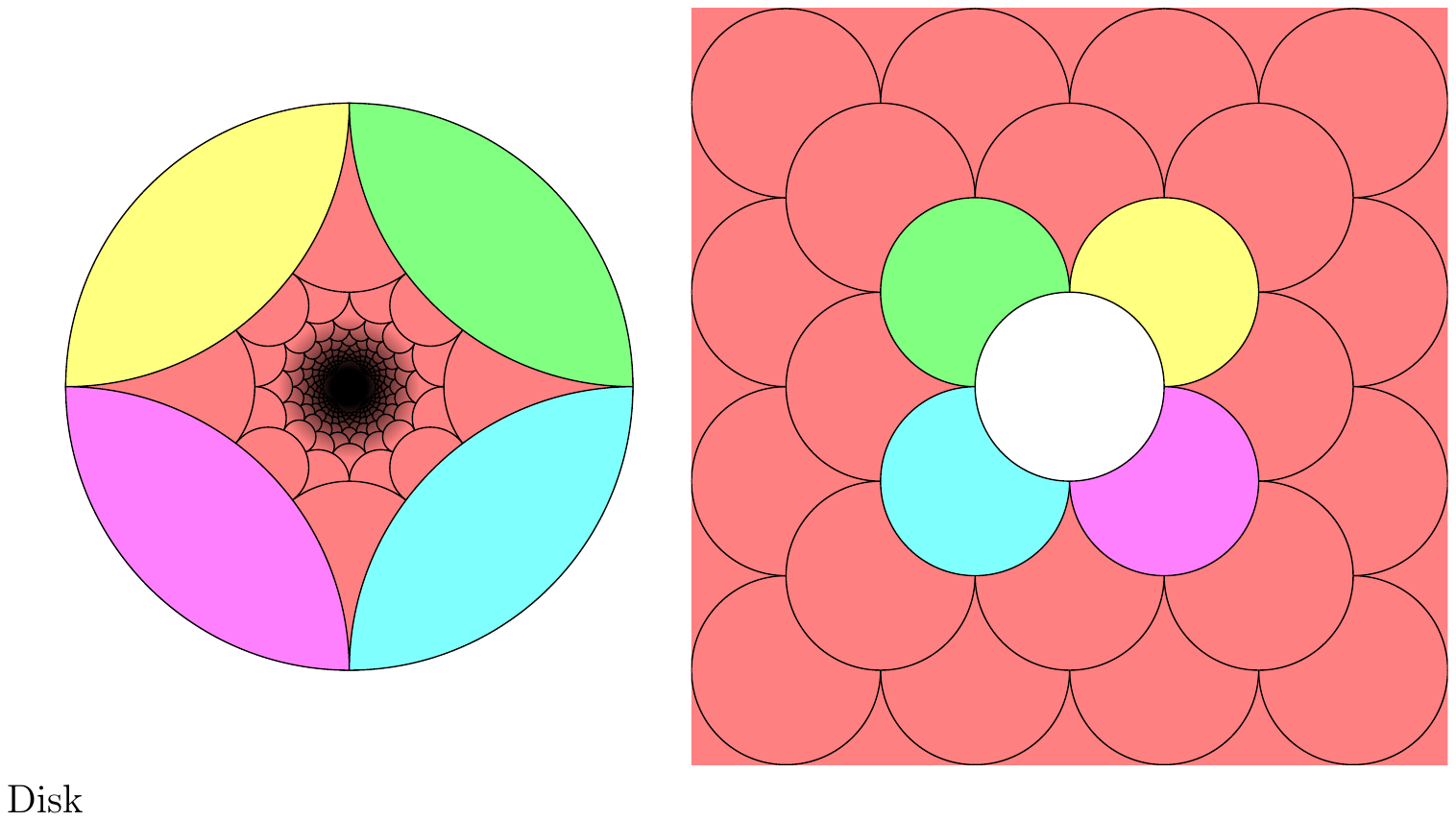}
    \MyCaption{disk}
    \label{fig disk double}
\end{figure}

\begin{prop}\label{disk is good}
	The disk algorithm satisfies the finite building property with $\Pc = \{K_1,\ldots,K_5\}$ from Equation~\ref{disk partition}.
\end{prop}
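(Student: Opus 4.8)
The plan is to apply \Cref{Markov shortcut} with the partition $\Pc=\{K_1,\dots,K_5\}$ of \eqref{disk partition}. Since $\bra{\cdot}$ takes values only in the ideal $E$ of \eqref{E}, we have $\ang a=\emptyset$ for $a\notin E$, and $\ang 0=\partial\D$ is a null set; so only the digits $a\in E\setminus\{0\}$ need attention. For such $a$, the definition \eqref{disk choice} gives
\[ \ang a=\setform{x\in\bar\D}{-1/x\in a+V(a)}=S\big(a+V(a)\big), \]
where the second equality holds because every $w\in a+V(a)$ automatically satisfies $\abs w\ge1$ (so that $S(w)\in\bar\D$): when $\abs a\ge2$ this is simply because $a+\bar\D\subset\C\setminus\D$, and for the four corner digits $\abs a=\sqrt2$ it is because such an $a$ lies in one of $E_1,\dots,E_4$ and the set $V_j$ of \eqref{Vs} corresponding to that $E_j$ is cut out of $\bar\D$ by exactly the inequality $\abs{v+a}\ge1$, which becomes $\abs w\ge1$ after $w=a+v$.

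Granting this, condition~(2) of \Cref{Markov shortcut} is essentially \Cref{disk parts are good}: writing $j(a)$ for the index with $a\in E_{j(a)}$, that lemma expresses $V(a)=V_{j(a)}$ (up to measure zero) as a union $\bigcup_k K_k$ of partition elements, so $S(\ang a)=a+V(a)=\bigcup_k(a+K_k)$ is a union of translates from $\Pc$. The real work is condition~(1), that each $\ang a$ lie in a single $K_i$. For a corner digit $a$, the set $\ang a=S(a+V_{j(a)})$ concentrates near the point $S(a)$, which has modulus $1/\sqrt2$ and lies in the direction of one of the four points $\pm1\pm\i$; one checks directly that $\ang a$ is contained in the petal $K_i$ attached to that point. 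For $\abs a\ge2$ the bounded set $a+V(a)\subseteq a+\bar\D$ lies at distance $\ge\abs a-1$ from $0$, so $\ang a=S(a+V(a))\subseteq\bar B\!\left(0,\tfrac1{\abs a-1}\right)$; as soon as $\abs a\ge4$ this ball is contained in $B(0,\sqrt2-1)\subset\operatorname{int}K_1$, because the four petals stay at distance $\ge\sqrt2-1$ from the origin, so $\ang a\subset K_1$. This leaves only the digits with $\abs a\in\{2,\,2\sqrt2,\,\sqrt{10}\}$, a mere handful modulo the order-$4$ rotational symmetry $z\mapsto-\i z$ respected by the algorithm, to be verified by hand. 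Once (1) and (2) are in place, \Cref{Markov shortcut} yields the finite building property.

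I expect those small-norm instances of condition~(1) to be the one genuinely delicate point. For such $a$ the crude bounding disk $S(a+\bar\D)$ can reach out far enough to meet the $S$-image of one of the petals $K_2,\dots,K_5$, so containment of $\ang a$ in $K_1$ cannot be read off from size alone; one must instead use the precise shape of the lune $a+V(a)$, exploiting that the portion of $a+\bar\D$ deleted by the defining inequalities of $V_{j(a)}$ is exactly the portion whose image under $S$ would stick out of $K_1$ into a petal. With the explicit unions from \Cref{disk parts are good} (and the picture in \Cref{fig disk double}) these finitely many checks go through. The same bookkeeping could instead be routed through \Cref{pre tiling shortcut}, decomposing each $S(K_i)=\bigcup_a S(K_i\cap\ang a)$ and matching the pieces against translates of the partition.
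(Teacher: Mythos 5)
Your proposal follows essentially the same route as the paper: apply \Cref{Markov shortcut}, using $S(\ang a)=a+V(a)$ from \eqref{disk choice} together with \Cref{disk parts are good} for condition (2), and checking that each nonempty $\ang a$ sits in a single $K_i$ (the four corner digits giving the petals $K_2,\dots,K_5$, all larger even digits landing in $K_1$) for condition (1). The only difference is presentational: where the paper verifies condition (1) by inspection of \Cref{fig disk double}, you give a quantitative bound $\ang a\subseteq\bar B(0,\tfrac1{\abs a-1})$ handling all $\abs a\ge4$ and correctly isolate the finitely many small-norm digits for direct checking, which is a sound (indeed slightly more explicit) version of the same argument.
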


\begin{proof}
	We want to use \Cref{Markov shortcut}, which requires that each $\ang a$ is contained in some $K_i$ and that each $S(\ang a)$ is a union of sets of the form $a + K_j$.
	
	If $a \in \C$ is even and $\abs a > \sqrt 2$, then $\ang a \subset K_1$ (see \Cref{fig disk double}). For $\abs a = \sqrt2$, we have not just subsets but equality:
	\begin{align*}
		\ang{1+\i} &= K_2, &
		\ang{-1+\i} &= K_3, &
		\ang{-1-\i} &= K_4, &
		\ang{1-\i} &= K_5.
	\end{align*}
	If $a = 0$ or $a \in \C$ is not even, then $\ang a = \emptyset$. 
	Thus we have shown that each $\ang a$ is a subset of some $K_i$.
	
	By \eqref{disk choice}, 
	\[ S(\ang a) = a + V(a) \]
	and therefore $T^{-a}S\ang a = V_j$ for some $V_j$. Since each $V_j$ is buildable by \Cref{disk parts are good}, this \namecref{disk is good} is now proved via \Cref{Markov shortcut}.
\end{proof}

\begin{figure}[ht]
    \includegraphics[page=5,trim={4.65cm 22.75cm 4.8cm 2.1cm},clip,width=0.75\textwidth]{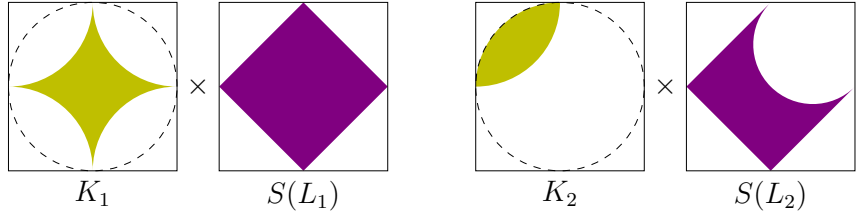}
    \caption{Some products $K_i \times S(L_i)$ for the disk algorithm. The others are rotations of $K_2 \times S(L_2)$}
    \label{fig disk products}
\end{figure}

\begin{thm} \label{disk bijectivity}
	Let $K_1,\ldots,K_5$ be as in \eqref{disk partition}, and define 
	\< \label{disk L} \begin{split}
		L_1 &= \bar\C \setminus \big( B(\tfrac{1+\ssi}2,\tfrac{1}{\sqrt2}) \cup B(\tfrac{-1+\ssi}2,\tfrac{1}{\sqrt2}) \cup B(\tfrac{-1-\ssi}2,\tfrac{1}{\sqrt2}) \cup B(\tfrac{1-\ssi}2,\tfrac{1}{\sqrt2}) \big) \\
		L_2 &= L_1 \setminus \setform{ w \in \C }{ \Im w > -\Re w + 1 } \\
		L_i &= -\i \, L_{i-1} \quad\text{for } i=3,4,5.
	\end{split} \>
	The map $\hat G$ for the disk algorithm is bijective a.e. on $\bigcup_{i=1}^5 K_i \times L_i$.
\end{thm}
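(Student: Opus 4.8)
The plan is to invoke \Cref{fps iff system A}, reducing the theorem to checking that the sets in \eqref{disk L} satisfy the system \eqref{system A}; the argument will parallel the proof of \Cref{NE bijectivity}. First I would make $\ExtAlpha_i$ explicit. Since the disk algorithm meets the hypotheses of \Cref{Markov shortcut}, each $\ang a$ sits inside a single partition element $K_{j(a)}$ — with $\ang a = K_{j(a)}$ for $\abs a = \sqrt2$ (so $\ang{1+\i}=K_2$, $\ang{-1+\i}=K_3$, $\ang{-1-\i}=K_4$, $\ang{1-\i}=K_5$) and $\ang a \subseteq K_1$ for $\abs a > \sqrt2$ — so $K_{j,a}$ is empty unless $j=j(a)$, in which case $G(K_{j(a),a}) = G(\ang a) = T^{-a}S\ang a = V(a)$. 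By \Cref{disk parts are good}, every $V(a) = V_m$ equals a union $\bigcup_{i\in I(m)}K_i$ for an explicit index set $I(m)$, so $(a,j)\in\ExtAlpha_i$ precisely when $j=j(a)$ and $i\in I(m(a))$, where $a\in E_{m(a)}$. Thus \eqref{system A} becomes
\[ L_i \;=\; \bigcup_{\substack{a\in E,\ \ang a\ne\emptyset\\ i\in I(m(a))}} T^{-a}SL_{j(a)}, \qquad 1\le i\le 5, \]
and since $\bra{-\i w} = -\i\bra{w}$ (the ring $E$ and the families $\{V_m\}$, $\{E_m\}$ are built to be $-\i$-invariant) while $K_i=-\i K_{i-1}$ and $L_i=-\i L_{i-1}$, it suffices to verify this identity for $i=1$ and $i=2$.

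Next I would compute the sets $SL_i$. Each of the four balls in \eqref{disk L} is centered at distance $\tfrac1{\sqrt2}$ from $0$, equal to its radius, hence passes through $0$, so its image under the M\"obius involution $S(w)=-1/w$ is a half-plane; a short computation then gives $SL_1 = \{\,z : \abs{\Re z}+\abs{\Im z}\le 1\,\}$, the closed diamond with vertices $\pm1,\pm\i$ (which is bounded and lies in $\bar\D$), followed by $SL_2 = SL_1\setminus B(\tfrac{-1+\ssi}2,\tfrac1{\sqrt2})$ and $SL_i=\i\,SL_{i-1}$. These are the sets one would draw, analogous to \Cref{fig NE unions}.

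The heart of the argument is then verifying the two tiling identities. The geometric backbone is that the diamond $SL_1$ is a fundamental domain for the index-$2$ sublattice $E\subseteq\Z[\i]$: it has area $2$, equal to the covolume of $E$, and its edges pair up under $w\mapsto w\pm(1\pm\i)$, so the translates $\{\,a+SL_1 : a\in E\,\}$ tile $\C$ up to measure zero. For $i=1$ one has $1\in I(m)$ for every $m$, so the union on the right runs over all $a$ with $\ang a\ne\emptyset$; starting from the diamond tiling, one drops the central ($a=0$) tile and, at the four tiles with $\abs a=\sqrt2$, replaces $T^{-a}SL_1$ by the smaller $T^{-a}SL_{j(a)}$, and the task is to confirm that the totality of deleted pieces equals $\bigcup B(\tfrac{\pm1\pm\ssi}2,\tfrac1{\sqrt2})=\bar\C\setminus L_1$. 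For $i=2$ the index set shrinks — reading off the $I(m)$ it omits exactly the classes $\{0\}$, $E_2$, $E_5$, $E_6$ — and the parallel computation must reproduce the extra half-plane removed in the definition of $L_2$.

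I expect this last step to be the main obstacle: it is the only genuinely geometric part of the proof, and it comes down to checking that the circular arcs cut out of the finitely many modified neighbouring tiles reassemble, modulo measure zero, into the boundary balls of $L_i$ — matching arc by arc — with due attention to the tie-breaking along the overlapping sets $a+V(a)$. Everything upstream is bookkeeping with \Cref{disk parts are good} and the $-\i$-symmetry, and once the two identities hold, \Cref{fps iff system A} delivers the theorem.
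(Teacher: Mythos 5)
Your route is the same one the paper takes: it, too, dismisses this theorem by invoking \Cref{fps iff system A} and saying the verification of \eqref{system A} mirrors \Cref{NE bijectivity}, with $J(a)$ and $j(a)$ exactly as you set them up. Your bookkeeping is correct as far as it goes: the Markov structure of the disk algorithm, the reduction of $\ExtAlpha_i$ to ``$i\in J(a)$, $j=j(a)$,'' the restriction to $i=1,2$ by the $-\i$-symmetry, and the computations $SL_1=\setform{z}{\abs{\Re z}+\abs{\Im z}\le 1}$, $SL_2=SL_1\setminus B(\tfrac{-1+\ssi}2,\tfrac1{\sqrt2})$, $SL_i=\i\,SL_{i-1}$ are all right, as is the observation that $E$-translates of $SL_1$ tile the plane.

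The gap is that the only step carrying actual content --- the arc-matching for $i=1,2$, which you explicitly defer as ``the main obstacle'' --- is never performed, and it is not a formality: with the sets exactly as written it does not close. Check the tile centred at $-1-\i$, which corresponds to the digit $a=1+\i$ and hence (since $\ang{1+\i}=K_2$) to the term $T^{-(1+\ssi)}SL_2$. With your (correct) $SL_2$ this term is that tile minus the half-disk of $B(\tfrac{-3-\ssi}2,\tfrac1{\sqrt2})$, the cap along the tile's upper-left edge; but $L_1$ intersected with the same tile is the tile minus the half-disk of $B(\tfrac{-1-\ssi}2,\tfrac1{\sqrt2})$, the cap along the edge facing the origin. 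Concretely, $w_0=-0.7-0.7\i$ lies inside $B(\tfrac{-1-\ssi}2,\tfrac1{\sqrt2})$, hence outside $L_1$, while $w_0+(1+\i)=0.3+0.3\i$ lies in $SL_2$, so $w_0$ (with a whole neighbourhood) belongs to the right-hand union: the $i=1$ identity fails on a set of positive measure. The verification only succeeds if the half-plane removed in $L_2$ is the one bounded by the line through $-1$ and $\i$, i.e.\ $\setform{w}{\Im w>\Re w+1}$, whose $S$-image is $B(\tfrac{1+\ssi}2,\tfrac1{\sqrt2})$ and whose removal puts the missing cap toward the origin; the statement's $\Im w>-\Re w+1$ is off by a reflection. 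So the decisive step of your plan both is missing and, as planned, would have to detect and repair this discrepancy rather than merely ``confirm'' the displayed sets.
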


The proof is similar to that of \Cref{NE bijectivity}, showing
	\[ L_i = \bigcup_{\substack{a\in\Z[\ssi] \\ J(a) \ni i}} T^{-a}S L_{j(a)} \qquad\text{for } i=1,\ldots,12, \]
	where $J(a)$ is such that $S\ang a = a + \bigcup_{k \in J(a)}K_k$ and $j(a)$ is the unique index for which $\ang a \subset K_j$.

\FloatBarrier
\subsection{The shifted Hurwitz algorithm} \label{ex SH}

In \cite[Examples~2.3\#2]{DN14}, Dani and Nogueria briefly describe the following family of choice functions indexed by $d \in \C$. For a fixed $d$, the function chooses for $z \in \C$ the point $h$ in $\overline{B(z,1)} \cap \Z[\i]$ for which $\abs{z-h-d}$ is minimal. Thus $d=0$ gives the standard Hurwitz (nearest integer) function. For $\abs d \le \frac{\sqrt{3}-1}2$ the set $K$ is a shifted square, but for larger $\abs d$ the set $K$ has curved boundary portions owing to the fact that $K$ is required to be inside $\bar\D$. The $d = -\tfrac12$ algorithm is called here the \emph{shifted Hurwitz algorithm}. See \Cref{fig regions}(f).

\begin{remark}
The shifted Hurwitz continued fraction expansions of a complex number with zero imaginary part coincides with its real $(-1,0)$-continued fraction, also called the simple backwards continued fraction.
\end{remark}

The set $K$ is partitioned into ten regions, shown in \Cref{fig SH partition}:
\< \label{SH partition} \begin{split}
	K_1 &= \setform{ z \in K }{ -\tfrac12 \le \Re z \le 0, \abs{z-\i} \ge 1, \abs{z+\i} \ge 1 } \\
	K_2 &= \setform{ z \in K }{ \abs{z-\i} \le 1, \abs{z+1} \le 1, \abs{z-(-1+\i)} \ge 1 } \\
	K_3 &= \setform{ z \in K }{ 0 \le \Im z \le \tfrac12, \abs{z-1} \ge 1, \abs{z+1} \ge 1 } \\
	K_4 &= \setform{ z \in K }{ \Re z \ge -\tfrac12, \abs{z-(-1+\i)} \le 1 } \\
	K_5 &= \setform{ z \in K }{ \Re z \le -\tfrac12, \abs{z-(-1+\i)} \le 1 } \\
	K_6 &= \setform{ z \in K }{ \Re z \le -\tfrac12, \abs{z-(-1+\i)} \ge 1, \abs{z-(-1-\i)} \ge 1 } \\
	K_i &= \setform{ \overline z }{ z \in K_{i-5} } \quad\text{for } i = 7,8,9,10.
\end{split} \>

\begin{figure}[ht]
    \begin{tikzpicture}[scale=3]
    \draw (-0.133975,1/2) arc (-30:-90:1) arc (90:30:1) arc (-30:30:1);
    \draw (-1/2,0.133975) arc (-120:-90:1) arc (90:120:1);
    \draw (-1/2,-1/2) -- (-1/2,1/2);
	\draw [thick] (0.133975,1/2) -- (0.133975-1,1/2) arc (150:210:1) -- (0.133975,-1/2) arc (210:150:1);;
	
	\foreach \k/\r in {1/180, 3/90, 8/-90} \draw (\r:0.4) node {\footnotesize$K_{\k}$};
	\draw (-0.6,0) node {\footnotesize$K_{6}$};
	\foreach \k/\r in {2/30, 4/70, 5/135, 7/-30, 9/-70, 10/-135} \draw (-1/2,0)+(\r:0.39) node {$K_{\k}$};
    \end{tikzpicture}
    \caption{Finite partition of $K$ for the shifted Hurwitz algorithm}
    \label{fig SH partition}
\end{figure}
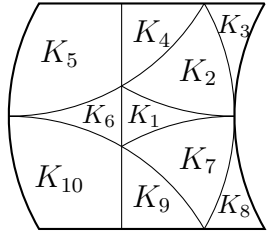

\begin{remark} For $i \notin \{5,6,10\}$, each $K_i$ in \eqref{SH partition} is also a set from the (unshifted) Hurwitz partition in \eqref{Hurwitz partition}, not necessarily with the same index. The pieces $K_5, K_6, K_{10}$ that are outside the unit square centered at the origin can also be described as unions of translates of sets from \eqref{Hurwitz partition}. \end{remark}

\begin{prop}\label{SH is good}
	The shifted Hurwitz algorithm satisfies the finite building property with $\Pc = \{K_1,\ldots,K_{10}\}$ from Equation~\ref{SH partition}.
\end{prop}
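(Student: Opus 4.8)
The plan is to follow the template of \Cref{Hurwitz is good} and \Cref{NO is good} and invoke \Cref{tiling shortcut}. First I would record that the $d=-\tfrac12$ choice function really does come from an integer tiling: as noted after \eqref{tile choice} (and in the second bullet of the remark after \Cref{make choice}), the $\Z[\i]$-translates of the shifted Hurwitz set $K$ cover $\C$ with pairwise-disjoint interiors, so $Z=\Z[\i]$ satisfies the hypothesis of \Cref{tiling shortcut}: if $a\in\Z[\i]$ and $w$ lies in the interior of $a+K$, then $\bra{w}=a$. Consequently it suffices to exhibit, for each $i$, a decomposition of $S(K_i)=\{-1/z:z\in K_i\}$ as a union of sets of the form $a+K_j$ with $a\in\Z[\i]$ and $1\le j\le 10$.

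Next I would cut the work in half using conjugation symmetry. Since $d=-\tfrac12\in\R$, the choice function satisfies $\bra{\overline z}=\overline{\bra{z}}$, and because $S$ and every $T^a$ intertwine complex conjugation with $S$ and $T^{\overline a}$ respectively, one gets $S(\overline{K_i})=\overline{S(K_i)}$. The partition \eqref{SH partition} is built so that conjugation acts as the involution fixing $K_1$ and $K_6$ and swapping $K_2\!\leftrightarrow\!K_7$, $K_3\!\leftrightarrow\!K_8$, $K_4\!\leftrightarrow\!K_9$, $K_5\!\leftrightarrow\!K_{10}$; hence once a decomposition of $S(K_i)$ into pieces $a+K_j$ is in hand for $i\in\{1,2,3,4,5,6\}$, conjugating it (replacing each $a$ by $\overline a$ and each index by its partner) produces the required decomposition for $i\in\{7,8,9,10\}$ for free.

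The computational core is then to write out, one at a time, the decompositions of $S(K_1),\dots,S(K_6)$. Each of these images is unbounded, so — exactly as for $S(K_1),S(K_2),S(K_3)$ in \Cref{Hurwitz is good} — it should be presented as finitely many ``partial-square'' pieces $a+K_j$ hugging $\partial K$ together with all full copies $a+K$ for $a$ ranging over an explicit cone of large Gaussian integers. I would draw these six images (the analogue of \Cref{fig Hurwitz double}), read the unions off the picture, and then verify algebraically that each listed $a+K_j$ is contained in $S(K_i)$ and that the listed pieces exhaust $S(K_i)$; by \Cref{tiling shortcut} this completes the proof. A useful sanity check and partial shortcut: for $i\notin\{5,6,10\}$ the set $K_i$ coincides with one of the unshifted Hurwitz cells (as noted in the remark before this proposition), so the shape of $S(K_i)$ is already pinned down by \Cref{Hurwitz is good}, and only the re-expression of the resulting region in terms of the ten shifted pieces $K_1,\dots,K_{10}$ remains.

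I expect the genuine obstacle to be the three cells $K_5$, $K_6$, $K_{10}$ that protrude to the left of the unit square, together with the curved arcs of $\partial K$ near $z=-\tfrac12\pm\i$. For these, $S$ applied to the ``extra'' sliver outside the unit square does not map cleanly onto a single translate $a+K_j$, so the decompositions of $S(K_5)$ and $S(K_6)$ — and, when checking consistency, the places where other $S(K_i)$ must absorb translates of $K_5$, $K_6$, $K_{10}$ — demand the most care. The way I would control this is to write $K_5$, $K_6$, $K_{10}$ as finite unions of $\Z[\i]$-translates of unshifted Hurwitz cells (again available from the remark preceding \Cref{SH is good}) and track those translates through $S$; once every $S(K_i)$ has been presented as a union of sets $a+K_j$ with $a\in\Z[\i]$, \Cref{tiling shortcut} finishes the argument.
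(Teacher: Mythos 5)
Your proposal follows essentially the same route as the paper: it invokes \Cref{tiling shortcut} with $Z=\Z[\i]$, reduces to exhibiting each $S(K_i)$ as a union of translates $a+K_j$, and handles $i=7,\ldots,10$ by the conjugation symmetry built into \eqref{SH partition}, exactly as the paper does (the paper writes out the unions for $S(K_1),\ldots,S(K_6)$ and then appeals to symmetry). The only thing separating your sketch from the paper's proof is that the explicit decompositions for $i=1,\ldots,6$ --- which constitute the actual content of the verification, including the delicate terms where translates of the protruding cells $K_5,K_6,K_{10}$ appear --- are deferred rather than written down.
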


\begin{figure}[hbt]
    \includegraphics{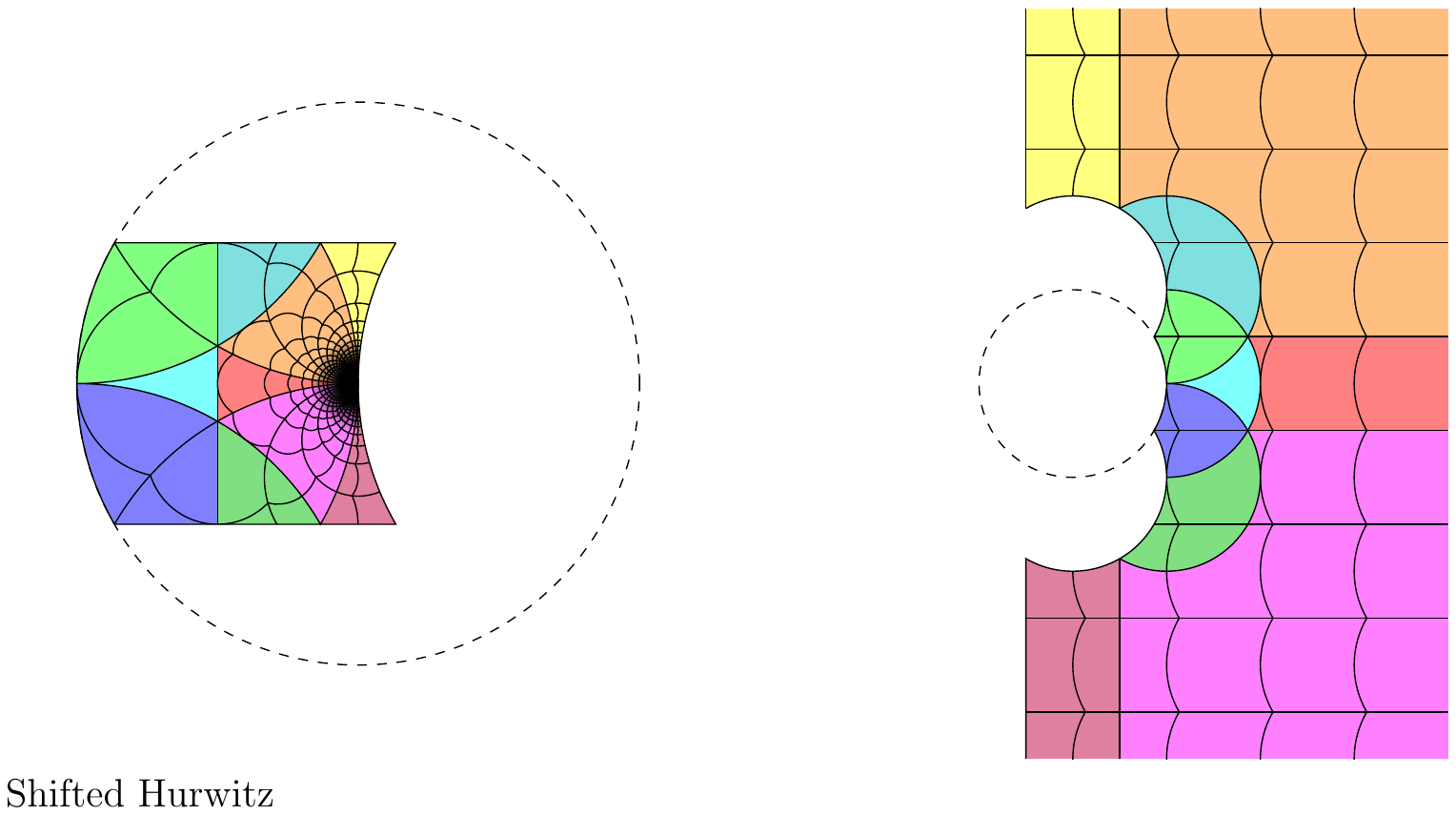}
    \MyCaption{shifted Hurwitz}
    \label{fig SH double}
\end{figure}

\begin{proof}
The shifted Hurwitz algorithm satisfies the condition of \Cref{tiling shortcut} with $Z = \Z[\i]$. Thus we must show only that each $S(K_i)$ can be written as a union of sets $a + K_j$ with $a \in \Z[\i]$. 
\begin{align*}
	S(K_1) &= \bigg( 2 + K_3 \cup K_8 \bigg) \cup \bigcup_{\substack{n \in \Z \\ n \ge 3}} (n+K) \\
	S(K_2) &= \bigg( 2\!+\!\i + K_3 \cup K_8 \bigg) \cup \left( 1\!+\!2\i + \bigcup_{j=1}^{4} K_j \right) \\*&\qquad \cup \left( 2\!+\!2\i + \bigcup_{j=1}^{8} K_j \right) \cup \bigcup_{\substack{n+m\ssi \in \Z[\ssi] \\ \min\{m,n\} \ge 3}} (n\!+\!m\i + K) \\[-0.2em]
	S(K_3) &= \left( 2\i + \bigcup_{j=1}^4 K_j \right) \cup \bigg( 1\!+\!2\i + K_5 \cup K_6 \bigg) \\*&\qquad \cup \bigcup_{\substack{n\in\N\\ n\ge3}} \left( n\i + \bigcup_{j\ne 5,6,10} K_j \right) \cup \bigcup_{\substack{n\in\N\\ n\ge3}} \left( 1 + n\i + \bigcup_{j = 5,6,10} K_j \right) \\
	S(K_4) &= \bigg( 1\!+\!\i + K_3 \bigg) \cup \left( 2\!+\!\i + \bigcup_{j=1}^7 K_j \setminus K_3 \right) \\*&\qquad \cup \Big( 1\!+\!2\i + K_7 \cup K_8 \Big) \cup \Big( 2\!+\!2\i + K_9 \cup K_{10} \Big) \\
	S(K_5) &= \big( 1 + K_3 \big) \cup \big( 2 + K_4 \cup K_5 \big) \cup \big( 1\!+\!\i + K_8 \big) \cup \big( 2\!+\!\i + K_9 \cup K_{10} \big) \\
	S(K_6) &= 2 + K_1 \cup K_2 \cup K_6 \cup K_7
\end{align*}
On the right of \Cref{fig SH double}, the sets $S(K_1)$ through $S(K_6)$ are red, orange, yellow, teal, cyan, and light green, respectively.
By symmetry, expressions for $S(K_i), 7 \le i \le 10$, will be similar.
\end{proof}

From computer approximations, the sets $L_i$ for the shifted Hurwitz algorithm appear to be fractal. Unlike for standard Hurwitz (that is, nearest integer), some shifted Hurwitz $L_i$ are not bounded; this may be because the boundary of the fundamental set of the shifted Hurwitz algorithm contains many points with norm $1$ while the standard Hurwitz fundamental set is contained in $\overline{B(0,r)}$ with $r=\tfrac1{\sqrt2} < 1$.

\FloatBarrier

\addcontentsline{toc}{section}{References}

\end{document}